\pgfplotsset{compat=newest}
\theoremstyle{plain}
\newtheorem{rem}[theorem]{Remark}
\newtheorem{assumption}[theorem]{Assumption}
\newcommand{\hione}{{(1)}}
\newcommand{\hitwo}{{(2)}}
\newcommand{\apprlevy}{{(\varepsilon_l)}}
\newcommand{\apprgrf}{{(\varepsilon_W)}}
\newcommand{\be}{\begin {equation}}
\newcommand{\ee}{\end  {equation}}
\numberwithin{equation}{section} \allowdisplaybreaks[1]
\definecolor{darkgreen}{rgb}{0,.6,0}
\newcommand{\bee}{\begin {equation*}}
\newcommand{\eee}{\end {equation*}}
\title{Multilevel Monte Carlo estimators for elliptic PDEs with Lévy-type diffusion coefficient}
\author{Andrea Barth \thanks{IANS\textbackslash SimTech, University of Stuttgart 
(\email{andrea.barth@mathematik.uni-stuttgart.de}).},
\and Robin Merkle \thanks{IANS\textbackslash SimTech, University of Stuttgart 
	(\email{robin.merkle@mathematik.uni-stuttgart.de}).}}
\begin{document}
		
	\maketitle
		
	\begin{abstract}
General elliptic equations with spatially discontinuous diffusion coefficients may be used as a simplified model for subsurface flow in heterogeneous or fractured porous media. In such a model, data sparsity and measurement errors are often taken into account by a randomization of the diffusion coefficient of the elliptic equation which reveals the necessity of the construction of flexible, spatially discontinuous random fields. Subordinated Gaussian random fields are random functions on higher dimensional parameter domains with discontinuous sample paths and great distributional flexibility. In the present work, we consider a random elliptic partial differential equation (PDE) where the discontinuous subordinated Gaussian random fields occur in the diffusion coefficient. Problem specific multilevel Monte Carlo (MLMC) Finite Element methods are constructed to approximate the mean of the solution to the random elliptic PDE. We prove a-priori convergence of a standard MLMC estimator and a modified MLMC - Control Variate estimator and validate our results in various numerical examples. 
	\end{abstract}
	
	\begin{keywords}
		stochastic partial differential equations, L\'evy fields, Finite Element methods, circulant embedding, subordination, discontinuous random fields, Control Variates, multilevel Monte Carlo
	\end{keywords}
	
		\begin{AMS}
	 	65M60, 60H25, 60H30, 60H35, 35R60, 65C05, 65C30
	\end{AMS}

	
\section{Introduction}\label{sec:Intro}
Partial differential equations with random operators\textbackslash~data\textbackslash~domain are widely studied. For problems with sparse data or where measurement errors are unavoidable, uncertainties may be quantified using stochastic models. Methods to quantify uncertainty could be divided into two different branches: intrusive and non-intrusive. The former requires solving a high dimensional partial differential equation, where part of the dimensionality stems from the smoothness of the random field or process (see among others~\cite{GalerkiNFEApprOfStochEllPDEs}, \cite{FEsForEllProblemsWithStochCoeff}, \cite{OnTheConvOfTheStochGalerkinMethodForRandomEllPDEs} and the references therein). The latter are (essentially) sampling methods and require repeated solutions of lower dimensional problems (see, among others,~
\cite{MLMCForStochEllMultiscalePDEs},
\cite{MLMCFEMEllPDE},
\cite{AStudyOfElliptic}, \cite{AMultilevelMonteCarloAlgorithmForParabolicAdvectionDiffusionProblemsWithDiscontinuousCoefficients},  \cite{MultiLevelMonteCarloWeakGalerkinMethodForEllipticEquationsWithStochasticJumpCoefficients}, \cite{FurtherAnalysisOfMultilevelMonteCarloMethodsForEllipticPDEsWithRandomCoefficients}). Among the sampling method the multilevel Monte Carlo approach has been successfully established to lower the computational complexity for various uncertain problems, to the point where (depending on the dimension) it is asymptotically as costly as a single solve of the deterministic partial differential equation on a fine discretization level (see \cite{MLMCFEMEllPDE}, \cite{AStudyOfElliptic}, \cite{FiniteElementErrorAnalysisOfEllipticPDEsWIthRandomCoefficients} and \cite{GilesMLMCMethods}).
In the cited papers mostly Gaussian random fields were used as diffusivity coefficients in the elliptic equation. Gaussian random fields are stochastically very well understood objects and they may be used in both approaches. The distributions underlying the field are, however, Gaussian and therefore the model lacks flexibility, in the sense that fields cannot have pointwise marginal distributions having heavy-tails. Furthermore, Gaussian random fields with Matérn-type covariance operators have $\mathbb{P}$-almost surely spatial continuous paths. There are some extensions in the literature (see, for example, \cite{AStudyOfElliptic}, \cite{MultiLevelMonteCarloWeakGalerkinMethodForEllipticEquationsWithStochasticJumpCoefficients} and \cite{Gottschalk2021}). 

In this paper we investigate multilevel Monte Carlo methods for an elliptic PDE where the coefficient is given by a subordinated Gaussian random field. The subordinated Gaussian random field is a type of a (discontinuous) L\'evy field. Different subordinators display unique patterns in the discontinuities and have varied marginal distributions (see \cite{SGRF}). Existence and uniqueness of pathwise solutions to the problem was demonstrated in \cite{SGRFPDE}. Spatial regularity of the solution depends on the subordinated Gaussian random field which itself depends on the subordinator. The discontinuities in the spatial domain pose additional difficulty in the pathwise discretization. A sample-adapted approach was considered in \cite{SGRFPDE}, but is limited to certain subordinators. Here we investigate not only the limitations of a sample-adapted approach in multilevel sampling, but also a Control Variates ansatz as presented first in \cite{NobileTeseiMLCV}.  

We structured the rest of the paper as follows: In Section~\ref{sec:elliptic_problem} we introduce a general stochastic elliptic equation and its weak solution under mild assumptions on the coefficient. These assumptions accommodate the subordinated Gaussian random fields we introduce in Section~\ref{sec:subordinated_GRF}. In Section~\ref{sec:subGRFelliptic} we approximate the diffusion coefficient and state a convergence result of the elliptic equation with the approximated coefficient to the unapproximated solution. In Section~\ref{sec:approx_solution} we discuss spatial approximation methods, which are needed for the multilevel Monte Carlo methods introduced in Section~\ref{sec:MLMC} and its Control Variates variant in Section~\ref{sec:MLMCCV}. Numerical examples are presented in the last section.


\section{The stochastic elliptic problem}\label{sec:elliptic_problem}
In this section, we briefly introduce the general stochastic elliptic boundary value problem. For more details on the existence, uniqueness and measurability of the solution to the considered PDE, we refer the reader to \cite{SGRFPDE} and \cite{AStudyOfElliptic}.\\
For the rest of this paper we assume that a complete probability space $(\Omega,\mathcal{F},\mathbb{P})$ is given. Let $(H,( \cdot,\cdot)_H)$ be a Hilbert space. A $H$-valued random variable is a measurable function $Z:\Omega\rightarrow H$. The space $L^p(\Omega;H)$ contains all strongly measurable functions $Z:\Omega\rightarrow H$ with $\|Z\|_{L^p(\Omega;H)}<\infty$, for $p\in [1,+\infty]$, where the norm is defined by
	\begin{align*}
	\|Z\|_{L^p(\Omega;H)} := \begin{cases}\mathbb{E}(\|Z\|_H^p)^\frac{1}{p}&,\text{ if } 1\leq p<+\infty, \\ \underset{\omega\in\Omega}{ess\,sup} \|Z\|_H &,\text{ if } p=+\infty. \end{cases}
	\end{align*}

\noindent For a $H$-valued random variable $Z\in L^1(\Omega;H)$ we define the \textit{expectation} by the Bochner integral $\mathbb{E}(Z):=\int_\Omega Z\,d\mathbb{P}$. Further, for a square-integrable, $H$-valued random variable $Z\in L^2(\Omega;H)$, the \textit{variance} is defined by $Var(Z):=\|Z-\mathbb{E}(Z)\|_{L^2(\Omega;H)}^2$. We refer to \cite{StochasticEquationsInInfiniteDimensions}, \cite{KallenbergFoundationsOfModernProb}, \cite{WTheorie} or \cite{PeszatZabzykSPDEWithLevyNoise} for more details on general probability theory and Hilbert space-valued random variables.

	\subsection{Problem formulation}
	Let $\mathcal{D}\subset \mathbb{R}^d$, for $d\in\mathbb{N}$, be a bounded, connected Lipschitz domain. We consider the elliptic PDE
	\begin{align}\label{EQ:EllProblem}
	-\nabla\cdot(a(\omega,\underline{x})\nabla u(\omega,\underline{x}))=f(\omega,\underline{x}) \text{ in }\Omega\times\mathcal{D},
	\end{align}
	where we impose the following boundary conditions
	\begin{align}
	u(\omega,\underline{x})&=0 \text{ on } \Omega\times \Gamma_1,\label{EQ:EllProblemBCD}\\
	a(\omega,\underline{x}) \overrightarrow{n}\cdot\nabla u(\omega,\underline{x})&=g(\omega,\underline{x}) \text{ on } \Omega\times \Gamma_2.\label{EQ:EllProblemBCN}
	\end{align}
	Here, we split the domain boundary in two $(d-1)$-dimensional manifolds $\Gamma_1,~\Gamma_2$, i.e. $\partial\mathcal{D}=\Gamma_1\overset{.}{\cup}\Gamma_2$, where we assume that $\Gamma_1 $ is of positive measure and that the exterior normal derivative $\overrightarrow{n}\cdot\nabla v$ on $\Gamma_2$ is well-defined for every $v\in C^1(\overline{\mathcal{D}})$. The mapping $a:\Omega\times\mathcal{D}\rightarrow\mathbb{R}$ is a stochastic (jump diffusion) coefficient and $f:\Omega\times\mathcal{D}\rightarrow\mathbb{R}$ is a (measurable) random source function. Further,   $\overrightarrow{n}$ is the outward unit normal vector to $\Gamma_2$ and $g:\Omega\times\Gamma_2\rightarrow\mathbb{R}$ a measurable function.
	Note that we just reduce the theoretical analysis to the case of homogeneous Dirichlet boundary conditions on $\Gamma_1$ to simplify notation. One could also consider non-homogeneous Dirichlet boundary conditions, since such a problem can always be considered as a version of \eqref{EQ:EllProblem} - \eqref{EQ:EllProblemBCN} with modified source term and Neumann data (see also \cite[Remark 2.1]{AStudyOfElliptic}).
	
	The following general assumptions ensure the well-posedness of the elliptic boundary value problem (see also \cite[Assumption 2.2]{SGRFPDE} and \cite[Assumption 2.3]{AStudyOfElliptic}).
	\begin{assumption}\label{ASS:ProblemAssumptionsGeneral}
	Let $H:=L^2(\mathcal{D})$. We assume that 
	\begin{enumerate}
	\item for any fixed $x\in\mathcal{D}$ the mapping $\omega\mapsto a(\omega,x)$ is measurable, i.e. $a(\cdot,x)$ is a (real-valued) random variable,
	\item for any fixed $\omega\in\Omega$ the mapping $a(\omega,\cdot)$ is $\mathcal{B}(\mathcal{D})-\mathcal{B}(\mathbb{R}_+)$-measurable and it holds $a_{-}(\omega):=\underset{\underline{x}\in\mathcal{D}}{ess\,inf}\,a(\omega,\underline{x})>0$ and $a_+(\omega):=\underset{\underline{x}\in\mathcal{D}}{ess\,sup}\,a(\omega,\underline{x})<+\infty$,
	\item $\frac{1}{a_{-}}\in L^p(\Omega;\mathbb{R})$, $f\in L^q(\Omega;H)$ and $g\in L^q(\Omega;L^2(\Gamma_2))$ for some $p,q\in [1,+\infty]$ such that $r:=(\frac{1}{p} + \frac{1}{q})^{-1}\geq 1$. 
	\end{enumerate}
	\end{assumption}

\subsection{Weak solution} In this subsection, we introduce the pathwise weak solution of problem \eqref{EQ:EllProblem} - \eqref{EQ:EllProblemBCN} following \cite{SGRFPDE}.
We denote by $H^1(\mathcal{D})$ the Sobolev space on $\mathcal{D}$ equipped with the norm 
\begin{align*}
\|v\|_{H^1(\mathcal{D})}=\left(\int_D|v(\underline{x})|^2 + \|\nabla v(\underline{x})\|_2^2d\underline{x}\right)^\frac{1}{2} \text{ for } v\in H^1(\mathcal{D}),
\end{align*}
with the Euclidean norm $\|\underline{x}\|_2:=(\sum_{i=1}^d \underline{x}_i^2)^\frac{1}{2}$, for $\underline{x}\in \mathbb{R}^d$
(see for example \cite[Section 5.2]{PartialDifferentialEquations} for an introduction to Sobolev spaces). We denote by $T$ the trace operator  $T:H^1(\mathcal{D})\rightarrow H^{\frac{1}{2}}(\partial \mathcal{D})$ 
where $Tv=v|_{\partial \mathcal{D}}$ for $v\in C^\infty (\overline{\mathcal{D}})$ (see \cite{TraceTheoremLipDomain}) and we introduce the solution space $V\subset H^1(\mathcal{D})$ by
\begin{align*}
V:=\{v\in H^1(\mathcal{D})~|~Tv|_{\Gamma_1}=0\},
\end{align*}
where we take over the standard Sobolev norm, i.e. $\|\cdot\|_V:=\|\cdot\|_{H^1(\mathcal{D})}$. 	We identify $H$ with its dual space $H'$ and work on the Gelfand triplet $V\subset H\simeq H'\subset V'$. Hence, Assumption 
\ref{ASS:ProblemAssumptionsGeneral} guarantees that $f(\omega,\cdot)\in V'$ and $g(\omega,\cdot)\in H^{-\frac{1}{2}}(\Gamma_2)$ for $\mathbb{P}$-almost every $\omega\in \Omega$. 
We multiply the left hand side of Equation \eqref{EQ:EllProblem} by a test function $v\in V$ and integrate by parts (see e.g. \cite[Section 6.3]{ValliACompactCourseOnLinPDEs}) to obtain
\begin{align*}
\int_\mathcal{D}-\nabla\cdot(a(\omega,\underline{x})\nabla u(\omega,\underline{x}))v(\underline{x}) d\underline{x} &=\int_\mathcal{D}a(\omega,\underline{x})\nabla u (\omega,\underline{x})\cdot\nabla v(\underline{x})d\underline{x} - \int_{\Gamma_2}g(\omega,\underline{x})[Tv](\underline{x})d\underline{x}.
\end{align*}
This leads to the following pathwise weak formulation of the problem:\\ For $\mathbb{P}$-almost all $\omega\in\Omega$, given $f(\omega,\cdot)\in V'$ and $g(\omega,\cdot)\in H^{-\frac{1}{2}}(\Gamma_2)$, find $u(\omega,\cdot)\in V$ such that 
\begin{align}\label{EQ:WeakFormProblem}
B_{a(\omega)}(u(\omega,\cdot),v) = F_\omega(v)
\end{align}
for all $v\in V$. The function $u(\omega,\cdot)$ is then called pathwise weak solution to problem \eqref{EQ:EllProblem} - \eqref{EQ:EllProblemBCN}. The bilinear form $B_{a(\omega)}$ and the operator $F_\omega$ are defined by
\begin{align*}
B_{a(\omega)}:V\times V\rightarrow \mathbb{R}, ~(u,v)\mapsto \int_{\mathcal{D}}a(\omega,\underline{x})\nabla u(\underline{x})\cdot \nabla v(\underline{x})d\underline{x},
\end{align*}
and
\begin{align*}
F_\omega:V\rightarrow\mathbb{R}, ~v\mapsto \int_\mathcal{D}f(\omega,x)v(\underline{x})d\underline{x} + \int_{\Gamma_2}g(\omega,\underline{x})[Tv](\underline{x})d\underline{x},
\end{align*}	
for fixed $\omega\in \Omega$, where the integrals in $F_\omega$ are understood as the duality pairings:
\begin{align*}
\int_\mathcal{D}f(\omega,\underline{x})v(\underline{x})d\underline{x} = \prescript{}{V'}{\langle}f(\omega,\cdot),v\rangle_V \text{ and }
\int_{\Gamma_2}g(\omega,\underline{x})[Tv](\underline{x})d\underline{x} = \prescript{}{H^{-\frac{1}{2}}(\Gamma_2)}{\langle	}g(\omega,\cdot),Tv\rangle_{H^\frac{1}{2}(\Gamma_2)},
\end{align*}
for $v\in V$. We have the following theorem on the existence of a unique solution to the random elliptic PDE \eqref{EQ:EllProblem} - \eqref{EQ:EllProblemBCN}.

\begin{theorem}\label{TH:ExistenceTheoremElliptic}(see \cite[Theorem 2.5]{AStudyOfElliptic})
Under Assumption \ref{ASS:ProblemAssumptionsGeneral}, there exists a unique pathwise weak solution $u(\omega,\cdot)\in V$ to problem \eqref{EQ:WeakFormProblem} for $\mathbb{P}$-almost every $\omega\in \Omega$. Furthermore, $u\in L^r(\Omega;V)$ and 
\begin{align*}
\|u\|_{L^r(\Omega;V)}\leq C(a_-,\mathcal{D}, p)(\|f\|_{L^q(\Omega;H)} + \|g\|_{L^q(\Omega;L^2(\Gamma_2))}),
\end{align*}
where $C(a_-,\mathcal{D},p)>0$ is a constant depending only on the indicated parameters.
\end{theorem}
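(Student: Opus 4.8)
The plan is to prove the statement pathwise via the Lax--Milgram theorem and then to lift the resulting deterministic estimate to the Bochner space $L^r(\Omega;V)$ by means of Hölder's inequality.

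First I would fix $\omega$ in the set of full $\mathbb{P}$-measure on which $0<a_-(\omega)\le a_+(\omega)<+\infty$ holds (Assumption~\ref{ASS:ProblemAssumptionsGeneral}(ii)) and on which $f(\omega,\cdot)\in H$ and $g(\omega,\cdot)\in L^2(\Gamma_2)$. For such $\omega$ the three hypotheses of the Lax--Milgram theorem on $V$ can be checked directly. Continuity of $B_{a(\omega)}$ follows from the Cauchy--Schwarz inequality and $a_+(\omega)<+\infty$, giving $|B_{a(\omega)}(u,v)|\le a_+(\omega)\|u\|_V\|v\|_V$. Coercivity relies on the lower bound $a_-(\omega)>0$ together with the Poincaré--Friedrichs inequality, which is available because $\Gamma_1$ has positive surface measure, so that $\|\nabla\cdot\|_{L^2(\mathcal{D})}$ is an equivalent norm on $V$; this yields $B_{a(\omega)}(v,v)\ge a_-(\omega)\,c_{\mathcal{D}}\|v\|_V^2$. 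Boundedness of $F_\omega$ follows from the continuity of the trace operator $T$ and the continuous embeddings $H\hookrightarrow V'$ and $L^2(\Gamma_2)\hookrightarrow H^{-1/2}(\Gamma_2)$. The Lax--Milgram theorem then provides a unique $u(\omega,\cdot)\in V$ solving \eqref{EQ:WeakFormProblem}, together with the pathwise bound
\begin{align*}
\|u(\omega,\cdot)\|_V\le\frac{C(\mathcal{D})}{a_-(\omega)}\big(\|f(\omega,\cdot)\|_H+\|g(\omega,\cdot)\|_{L^2(\Gamma_2)}\big).
\end{align*}

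Before the norm $\|u\|_{L^r(\Omega;V)}$ is even meaningful I must establish strong measurability of $\omega\mapsto u(\omega,\cdot)\in V$, since Lax--Milgram only delivers the solution separately for each $\omega$. The cleanest route is a Galerkin argument: on an increasing sequence of finite-dimensional subspaces $V_n\subset V$ spanned by fixed basis functions $\phi_1,\dots,\phi_n$, the Galerkin approximation $u_n(\omega,\cdot)$ solves a finite linear system whose entries $B_{a(\omega)}(\phi_i,\phi_j)$ and $F_\omega(\phi_i)$ depend measurably on $\omega$ by Assumption~\ref{ASS:ProblemAssumptionsGeneral}(i) and Fubini's theorem; hence each $u_n$ is measurable, and the uniform coercivity constant $a_-(\omega)\,c_{\mathcal{D}}$ forces $u_n(\omega,\cdot)\to u(\omega,\cdot)$ in $V$ for $\mathbb{P}$-almost every $\omega$, so the limit $u$ inherits strong measurability.

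Finally I would take the $L^r(\Omega)$-norm of the pathwise bound and apply the generalized Hölder inequality with the exponents $p$ and $q$, which is admissible precisely because $\tfrac1r=\tfrac1p+\tfrac1q$, to obtain
\begin{align*}
\|u\|_{L^r(\Omega;V)}\le C(\mathcal{D})\Big\|\frac{1}{a_-}\Big\|_{L^p(\Omega)}\big(\|f\|_{L^q(\Omega;H)}+\|g\|_{L^q(\Omega;L^2(\Gamma_2))}\big),
\end{align*}
where $\|1/a_-\|_{L^p(\Omega)}$ is finite by Assumption~\ref{ASS:ProblemAssumptionsGeneral}(iii); absorbing $C(\mathcal{D})$ and this quantity into one constant yields the claimed bound with $C(a_-,\mathcal{D},p)$. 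I expect the measurability step to be the main obstacle: the deterministic estimates and the Hölder argument are routine once Poincaré--Friedrichs and trace continuity are in place, whereas certifying that the pathwise Lax--Milgram solutions assemble into a strongly measurable $V$-valued map requires the separate approximation argument above.
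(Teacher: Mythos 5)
Your proposal is correct and follows essentially the same route as the proof the paper relies on (the theorem is quoted from \cite[Theorem 2.5]{AStudyOfElliptic}, with measurability handled as in \cite[Remark 2.5]{SGRFPDE}): pathwise Lax--Milgram with coercivity from $a_-(\omega)>0$ and Poincar\'e--Friedrichs on $V$, strong measurability of $\omega\mapsto u(\omega,\cdot)$ via Galerkin approximations, and H\"older's inequality with $\tfrac1r=\tfrac1p+\tfrac1q$ to pass to $L^r(\Omega;V)$. The only point to be slightly careful about is that your Fubini step for the measurability of the Galerkin matrix entries implicitly uses joint measurability of $a$, which is a bit stronger than the separate measurability stated in Assumption~\ref{ASS:ProblemAssumptionsGeneral} but holds for the coefficients considered here.
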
	
In addition to the (pathwise) existence of the weak solution, the authors gave a rigorous justification of the measurability of the solution mapping 
\begin{align*}
u:\Omega&\rightarrow V\\
\omega &\mapsto u(\omega,\cdot),
\end{align*}
in \cite[Remark 2.5]{SGRFPDE}.


\section{Subordinated Gaussian random fields}\label{sec:subordinated_GRF}
In \cite{SGRF}, the authors proposed a new subordination approach to construct discontinuous Lévy-type random fields: the \textit{subordinated Gaussian random field}. Motivated by the subordinated Brownian motion, the subordinated Gaussian Random field is constructed by replacing the spatial variables of a  Gaussian random field (GRF) $W$ on a general $d$-dimensional domain $\mathcal{D}\subset	\mathbb{R}^d$ by $d$ independent Lévy subordinators (see \cite{SGRF}, \cite{LevyProcessesInFinance}, \cite{LevyProcessesAndStochasticCalculus}). For $d=2$, the detailed construction is as follows:
For two positive horizons $T_1,T_2<+\infty$, we define the domain $\mathcal{D}=[0,T_1]\times[0,T_2]$. We consider a GRF $W = (W(x,y),~(x,y)\in \mathbb{R}_+^2)$ with $\mathbb{P}$-a.s. continuous paths and assume two independent Lévy subordinators $l_1=(l_1(x),~x\in[0,T_1])$ and $l_2=(l_2(y),~y\in[0,T_2])$ are given (see \cite{SGRF} and \cite{LevyProcessesAndStochasticCalculus}). The subordinated GRF is then defined by
\begin{align}\label{EQ:SGRFDefi}
L(x,y)=W(l_1(x),l_2(y)) \text{, for } (x,y)\in [0,T_1]\times [0,T_2].
\end{align}
The corresponding random field $L=(L(x,y),~(x,y)\in [0,T_1],\times [0,T_2])$ is in general discontinuous on the spatial domain $\mathcal{D}$.

\begin{figure}[ht]
	\centering
	\subfigure{\includegraphics[scale=0.18]{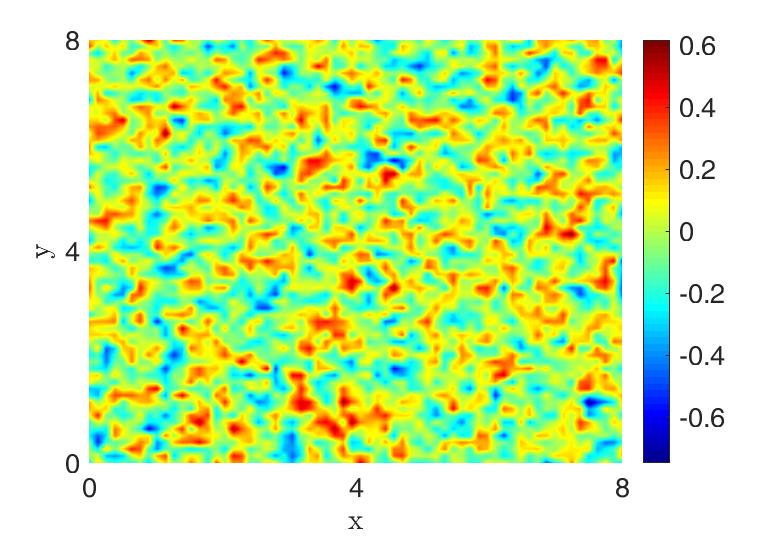}}
	\subfigure{\includegraphics[scale=0.18]{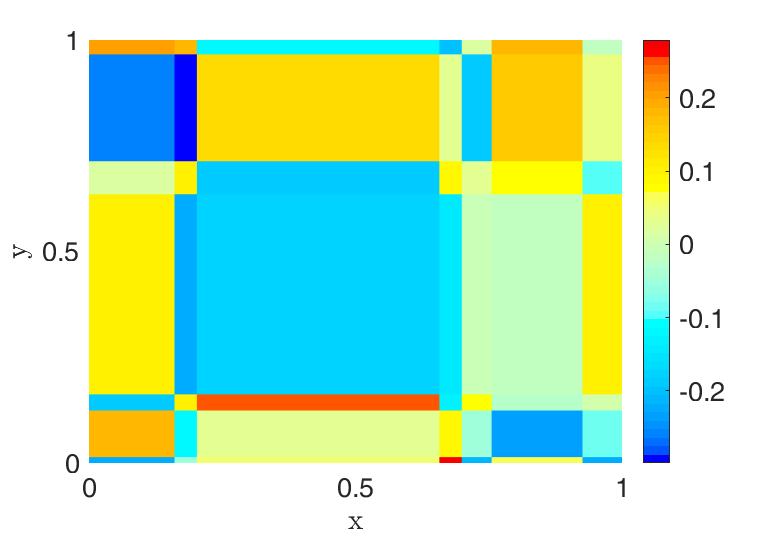}}
	\subfigure{\includegraphics[scale=0.18]{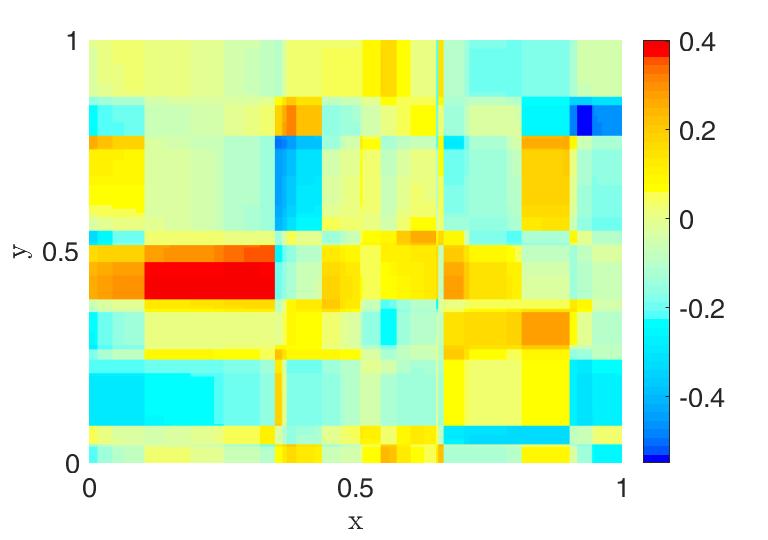}}
	\caption{Sample of a Matérn-1.5-GRF (left) and a corresponding Poisson-subordinated GRF (middle) and Gamma-subordinated GRF (right).}\label{Fig:SamplesSubordGRF}
\end{figure} 
\noindent Figure \ref{Fig:SamplesSubordGRF} demonstrates how the subordinators $l_1$ and $l_2$ create discontinuities in the subordinated GRF. In the presented samples, the underlying GRF is a Matérn-1.5 GRF. We recall that, for a given smoothness parameter $\nu_M > 1/2$, correlation parameter $r_M>0$ and variance $\sigma_M^2>0$, the Matérn-$\nu_M$ covariance function on $\mathbb{R}_+^d\times \mathbb{R}_+^d$ is given by $q_M(\underline{x},\underline{y})=\rho_M(\|\underline{x}-\underline{y}\|_2)$, for $(\underline{x},\underline{y})\in\mathbb{R}_+^d\times \mathbb{R}_+^d$, with 
\begin{align*}
\rho_M(s) = \sigma_M^2 \frac{2^{1-\nu_M}}{\Gamma(\nu_M)}\Big(\frac{2s\sqrt{\nu_M}}{r_M}\Big)^{\nu_M} K_{\nu_M}\Big(\frac{2s\sqrt{\nu_M}}{r_M}\Big), \text{ for }s\geq 0,
\end{align*}
where $\Gamma(\cdot)$ is the Gamma function and $K_\nu(\cdot)$ is the modified Bessel function of the second kind (see \cite[Section 2.2 and Proposition 1]{QuasiMonteCarloFEMethodsForEllipticPDEsWithLognormalRandomCoefficients}). A Matérn-$\nu_M$ GRF is a centered GRF with covariance function $q_M$. It has been shown in \cite{SGRF} that the subordinated GRF constructed in \eqref{EQ:SGRFDefi} is separately measurable. Further, the corresponding random fields display great distributional flexibility, allow for a Lévy-Khinchin-type formula and formulas for their covariance functions can be derived which makes them attractive for applications. We refer the interested reader to \cite{SGRF} for a theoretical investigation of the constructed random fields.\\


\section{The subordinated GRF in the elliptic model equation}\label{sec:subGRFelliptic}

In this section we incorporate the subordinated GRF in the diffusion coefficient of the elliptic PDE \eqref{EQ:EllProblem} - \eqref{EQ:EllProblemBCN}. Further, we show how to approximate the diffusion coefficient and state the most important results on the approximation of the corresponding PDE solution following \cite{SGRFPDE}. For the proofs and a more detailed study of subordinated GRFs in the elliptic model equation we refer the reader to \cite{SGRFPDE}.
\subsection{Subordinated GRFs in the diffusion coefficient}
It follows from the Lévy-It\^o decomposition that any Lévy process on a one-dimensional (time) domain can be additively decomposed into a deterministic drift part, a continuous noise part and a pure-jump process (see \cite[Section 2.4]{LevyProcessesAndStochasticCalculus}). Motivated by this, we construct the diffusion coefficient $a$ in the elliptic PDE as follows.
\begin{definition}(see \cite[Definition 3.3]{SGRFPDE})\label{DEF:DefCoeff}
We consider the domain $\mathcal{D}=(0,D)^2$ with $D<+\infty$\footnote{For simplicity we chose a square domain, rectangular ones may be considered in the same way.}. We define the jump diffusion coefficient $a$ in problem \eqref{EQ:EllProblem} - \eqref{EQ:EllProblemBCN} with $d=2$ as
\begin{align}\label{EQ:DiffCoeffDefi}
a:\Omega\times\mathcal{D}\rightarrow (0,+\infty),~(\omega,x,y)\mapsto \overline{a}(x,y) + \Phi_1(W_1(x,y)) + \Phi_2(W_2(l_1(x),l_2(y))),
\end{align}
where 
\begin{itemize}
\item $\overline{a}:\mathcal{D}\rightarrow (0,+\infty)$ is deterministic, continuous and there exist constants $\overline{a}_+,\overline{a}_->0$ with $\overline{a}_-\leq \overline{a}(x,y)\leq \overline{a}_+$ for $(x,y)\in\mathcal{D}$.
\item $\Phi_1,~\Phi_2:\mathbb{R}\rightarrow [0,+\infty)$ are continuous .
\item $W_1$ and $W_2$ are zero-mean GRFs on $\mathcal{D}$ respectively on $[0,+\infty)^2$ with $\mathbb{P}-a.s.$ continuous paths.
\item $l_1$ and $l_2$ are L\'evy subordinators on $[0,D]$.
\end{itemize}
\end{definition}	

It follows by a pathwise application of the Lax-Milgram lemma that the elliptic model problem ~\eqref{EQ:EllProblem} - \eqref{EQ:EllProblemBCN} with the diffusion coefficient constructed in Definition~\ref{DEF:DefCoeff} has a unique pathwise weak solution. 
\begin{theorem}(see \cite[Theorem 3.6]{SGRFPDE})\label{TH:ExistenceOfSolutionSubord}
Let $a$ be as in Definition \ref{DEF:DefCoeff} and let $f\in L^q(\Omega;H),~g\in L^q(\Omega;L^2(\Gamma_2))$ for some $q\in [1,+\infty)$. Then there exists a unique pathwise weak solution $u(\omega,\cdot)\in V$ to problem~\eqref{EQ:EllProblem} - \eqref{EQ:EllProblemBCN} for $\mathbb{P}$-almost every $\omega\in\Omega$. Furthermore, $u\in L^r(\Omega;V)$ for all $r\in[1,q)$ and 
\begin{align*}
\|u\|_{L^r(\Omega;V)}\leq C(\overline{a}_-,\mathcal{D})(\|f\|_{L^q(\Omega;H)} + \|g\|_{L^q(\Omega;L^2(\Gamma_2))}),
\end{align*}
where $C(\overline{a}_-,\mathcal{D})>0$ is a constant depending only on the indicated parameters.
\end{theorem}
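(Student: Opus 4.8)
The plan is to deduce the statement from the general existence result, Theorem~\ref{TH:ExistenceTheoremElliptic}, by verifying that the coefficient $a$ from Definition~\ref{DEF:DefCoeff} fulfils the three conditions of Assumption~\ref{ASS:ProblemAssumptionsGeneral}; the solution, its uniqueness and the a-priori bound then follow at once. The decisive structural feature is that $\Phi_1,\Phi_2\geq 0$ while $\overline{a}\geq\overline{a}_->0$, so that
\begin{align*}
a(\omega,x,y)=\overline{a}(x,y)+\Phi_1(W_1(x,y))+\Phi_2(W_2(l_1(x),l_2(y)))\geq\overline{a}_->0
\end{align*}
holds pointwise for every $\omega$. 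Hence $a_-(\omega)\geq\overline{a}_-$ is bounded below by a deterministic constant, which gives the strict positivity required in condition (ii) and, moreover, $\tfrac{1}{a_-}\leq\tfrac{1}{\overline{a}_-}$, so that $\tfrac{1}{a_-}\in L^\infty(\Omega;\mathbb{R})\subseteq L^p(\Omega;\mathbb{R})$ for every $p\in[1,+\infty]$. Together with the hypotheses $f\in L^q(\Omega;H)$ and $g\in L^q(\Omega;L^2(\Gamma_2))$ this lets me choose $p=+\infty$ in the integrability condition (iii), for which $r=(\tfrac1p+\tfrac1q)^{-1}=q\geq 1$.

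For the pathwise upper bound I would fix $\omega$ outside a null set and argue that each summand is bounded on $\mathcal{D}$. The term $\overline{a}$ is continuous on the compact $\overline{\mathcal{D}}$, and $(x,y)\mapsto\Phi_1(W_1(x,y))$ is continuous as the composition of the continuous $\Phi_1$ with a continuous path of $W_1$. For the subordinated term I would use that $l_1,l_2$ are nondecreasing with finite terminal values $l_1(D),l_2(D)<\infty$ almost surely, so that $(l_1(x),l_2(y))$ ranges within the compact random rectangle $[0,l_1(D)]\times[0,l_2(D)]$; on this set the continuous path of $W_2$ is bounded, and composing with the continuous $\Phi_2$ shows that $\Phi_2(W_2(l_1(\cdot),l_2(\cdot)))$ is bounded. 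Consequently $a_+(\omega)<+\infty$ for $\mathbb{P}$-almost every $\omega$, which completes condition (ii).

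The part I expect to be the genuine obstacle is the measurability condition (i) (measurability of $\omega\mapsto a(\omega,x)$ for fixed $x$) together with the $\mathcal{B}(\mathcal{D})$-measurability of $a(\omega,\cdot)$ for fixed $\omega$, because the third summand composes the random field $W_2$ with the random time change $(l_1,l_2)$. For fixed $(x,y)$ the map $\omega\mapsto W_2(l_1(x),l_2(y))$ is a random field evaluated at a random argument, and its measurability is not immediate; here I would invoke the separate measurability of the subordinated Gaussian random field established in~\cite{SGRF} and recalled in Section~\ref{sec:subordinated_GRF}, after which composition with the continuous $\Phi_2$ and addition of the measurable terms $\overline{a}(x,y)$ and $\Phi_1(W_1(x,y))$ yields condition (i). For fixed $\omega$, the c\`adl\`ag monotone maps $x\mapsto l_1(x)$ and $y\mapsto l_2(y)$ are Borel measurable, hence so is $(x,y)\mapsto(l_1(x),l_2(y))$ componentwise; precomposing the continuous $W_2$ and postcomposing $\Phi_2$ preserves Borel measurability, so $a(\omega,\cdot)$ is $\mathcal{B}(\mathcal{D})$-$\mathcal{B}(\mathbb{R}_+)$-measurable.

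With all three conditions of Assumption~\ref{ASS:ProblemAssumptionsGeneral} verified, Theorem~\ref{TH:ExistenceTheoremElliptic} delivers the unique pathwise weak solution $u(\omega,\cdot)\in V$ for $\mathbb{P}$-almost every $\omega$, together with the a-priori estimate. Since the deterministic lower bound allows the choice $p=+\infty$, the Hölder step in that estimate collapses to $\|\tfrac{1}{a_-}\|_{L^\infty}\leq\tfrac{1}{\overline{a}_-}$, yielding $u\in L^q(\Omega;V)$ with a constant of the form $C_{\mathcal{D}}/\overline{a}_-$ depending only on $\overline{a}_-$ and $\mathcal{D}$. In particular the claimed bound holds for every $r\in[1,q)$ (indeed already for $r=q$), which is the assertion of the theorem. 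The only essential input beyond routine bookkeeping is the measurability of the time-changed field; the deterministic lower bound is precisely what distinguishes this setting from the lognormal case and removes the need for finite negative moments of $a_-$.
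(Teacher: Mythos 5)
Your proposal is correct and follows exactly the route the paper indicates (it only cites \cite[Theorem 3.6]{SGRFPDE} and remarks that the result ``follows by a pathwise application of the Lax-Milgram lemma''): you verify the three conditions of Assumption~\ref{ASS:ProblemAssumptionsGeneral} for the coefficient of Definition~\ref{DEF:DefCoeff} --- deterministic lower bound $\overline{a}_-$, pathwise boundedness via the a.s.\ finiteness of the subordinators and continuity of the paths of $W_2$, and measurability via the separate measurability of the subordinated field from \cite{SGRF} --- and then invoke Theorem~\ref{TH:ExistenceTheoremElliptic} with $p=+\infty$. Your observation that this even yields the bound for $r=q$ is a (harmless) slight strengthening of the stated range $r\in[1,q)$.
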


\subsection{Problem modification}

Theorem \ref{TH:ExistenceOfSolutionSubord} guarantees the existence of a unique solution $u$ to problem \eqref{EQ:EllProblem} - \eqref{EQ:EllProblemBCN} for the specific diffusion coefficient $a$ constructed in Definition \ref{DEF:DefCoeff}. However,  accessing this pathwise weak solution numerically is a different matter. Here, we face several challenges: The first difficulty is related to the domain on which the GRF $W_2$ is defined. The Lévy subordinators $l_1$ and $l_2$ can in general attain any value in $[0,+\infty)$. Hence, it is necessary to consider the GRF $W_2$ on the unbounded domain $[0,+\infty)^2$. However, most regularity and approximation results on GRFs are formulated for the case of a parameter space which is at least bounded and cannot easily be extended to unbounded domains (see e.g.  \cite[Chapter 1]{RandomFieldsAndGeometry}). Therefore, we modify the diffusion coefficient $a$ from Definition \ref{DEF:DefCoeff} and cut the Lévy-subordinators at a deterministic threshold $K>0$ depending on the choice of the subordinator. The resulting problem then coincides with the original problem up to a set of samples, whose probability can be made arbitrary small (see \cite[Remark 4.1]{SGRFPDE}). Furthermore, we have to bound the diffusion coefficient itself by a deterministic upper bound $A$ in order to show the convergence of the solution (see \cite[Section 5]{SGRFPDE} for details). Therefore, we also cut the diffusion coefficient at a deterministic level $A>0$. It can be shown that this induces an additional error in the solution approximation which can be controlled and vanishes for growing threshold $A$ (see \cite[Section 5.1, esp. Theorem 5.3 and Theorem 5.4]{SGRFPDE}). The two described modifications of the original problem \eqref{EQ:EllProblem} - \eqref{EQ:EllProblemBCN} are formalized in the following subsection.

We define the cut function $\chi_{K}(z) :=\min(z,K)$, for $z\in[0,+\infty)$, with a positive number $K>0$. Further, for fixed numbers $K,A>0$, we consider the following problem

\begin{align}\label{EQ:EllProblemCutUpperCut}
-\nabla\cdot(a_{K,A}(\omega,\underline{x})\nabla u_{K,A}(\omega,\underline{x}))=f(\omega,\underline{x}) \text{ in }\Omega\times\mathcal{D},
	\end{align}
where we impose the boundary conditions
	\begin{align}
	u_{K,A}(\omega,\underline{x})&=0 \text{ on } \Omega\times \Gamma_1,\label{EQ:EllProblemBCDCutUpperCut}\\
	a_{K,A}(\omega,\underline{x}) \overrightarrow{n}\cdot\nabla u_{K,A}(\omega,\underline{x})&=g(\omega,\underline{x}) \text{ on } \Omega\times \Gamma_.\label{EQ:EllProblemBCNCutUpperCut}
\end{align} 
The diffusion coefficient $a_{K,A}$ is defined by \footnote{We assume one fixed $K$ for all spacial dimensions to keep notation simple. However, the results presented in the subsequent sections also hold for individual values in each spacial dimension.}
\begin{align}\label{EQ:DiffCoeffDefiCutUpperCut}
a_{K,A}:\Omega\times\mathcal{D}&\rightarrow (0,+\infty),\notag\\
(\omega,x,y)&\mapsto \chi_{A}\Big(\overline{a}(x,y) + \Phi_1(W_1(x,y)) + \Phi_2(W_2(\chi_K(l_1(x)),\chi_K(l_2(y))))\Big).
\end{align}	
Again, Theorem~\ref{TH:ExistenceOfSolutionSubord} applies in this case and yields the existence of a pathwise weak solution $u_{K,A}\in L^r(\Omega;V)$, for $r\in[1,q)$, if $f\in L^q(\Omega;H)$ and $g\in L^q(\Omega;L^2(\Gamma_2))$. In \cite{SGRFPDE}, the authors investigated in detail how this modification affects the solution $u_{K,A}$ and how the resulting error can be controlled by the choice of the deterministic thresholds $K$ and $A$. Therefore, from now on we decide to consider problem \eqref{EQ:EllProblemCutUpperCut} - \eqref{EQ:DiffCoeffDefiCutUpperCut} for a fixed choice of $K$ and $A$ and focus on the approximation of the GRFs $W_1,W_2$ and the Lévy subordinators $l_1,l_2$ in the following. We come back on the choice of $K$ and $A$ in specific situations in Section \ref{sec:numerics}.

\subsection{Approximation of the GRFs and the Lévy subordinators and convergence of the approximated solution}\label{sec:apprOfGRFAndSubord}

In order to approximate the random solution $u_{K,A}$ of problem \eqref{EQ:EllProblemCutUpperCut} - \eqref{EQ:DiffCoeffDefiCutUpperCut} we have to approximate the GRFs $W_1,W_2$ and the Lévy subordinators $l_1,l_2$ to be able to generate samples of the diffusion coefficient $a_{K,A}$ defined in Equation \eqref{EQ:DiffCoeffDefiCutUpperCut}. Therefore, we have to impose some additional assumptions on the GRFs and the Lévy subordinators. We summarize our working assumptions in the following.

\begin{assumption}(See \cite[Assumption 4.2]{SGRFPDE})\label{ASS:CutProblemEigenvalues}
Let $W_1$ be a zero-mean GRF on $[0,D]^2$ and $W_2$ be a zero-mean GRF on $[0,K]^2$. We denote by $q_1:[0,D]^2\times [0,D]^2\rightarrow\mathbb{R}$ and $q_2:[0,K]^2\times [0,K]^2\rightarrow\mathbb{R}$ the covariance functions of these random fields and by $Q_1,Q_2$ the associated covariance operators defined by
\begin{align*}
Q_j\phi=\int_{[0,z_j]^2}q_j((x,y),(x',y'))\phi(x',y')d(x',y'),
\end{align*}
for $\phi\in L^2([0,z_j]^2)$ with $z=(D,K)$ and $j=1,2$. We denote by $(\lambda_i^{(1)},e_i^{(1)},~i\in \mathbb{N})$ resp. $(\lambda_i^{(2)},e_i^{(2)},~i\in \mathbb{N})$ the eigenpairs associated to the covariance operators $Q_1$ and $Q_2$. In particular, $(e_i^{(1)},~i\in \mathbb{N})$ resp. $(e_i^{(2)},~i\in \mathbb{N})$ are orthonormal bases of $L^2([0,D]^2)$ resp. $L^2([0,K]^2)$.
\begin{enumerate}
\item We assume that the eigenfunctions are continuously differentiable and there exist positive constants $\alpha, ~\beta, ~C_e, ~C_\lambda>0$ such that for any $i\in\mathbb{N}$ it holds

\begin{align*}
\|e_i^\hione\|_{L^\infty([0,D]^2)},~\|e_i^\hitwo\|_{L^\infty([0,K]^2)}&\leq C_e,\\
  \|\nabla e_i^\hione\|_{L^\infty([0,D]^2)},~\|\nabla	e_i^\hitwo\|_{L^\infty([0,K]^2)}&\leq C_e i^\alpha,~ \\
\sum_{i=1}^ \infty (\lambda_i^\hione + \lambda_i^\hitwo)i^\beta&\leq C_\lambda	< + \infty.
\end{align*}

\item There exist constants $\phi,~\psi, C_{lip}>0$ such that the continuous functions $\Phi_1,~\Phi_2:\mathbb{R}\rightarrow[0,+\infty)$ from Definition \ref{DEF:DefCoeff} satisfy
\begin{align*}
|\Phi_1'(x)|\leq \phi\, \exp(\psi |x|),~ |\Phi_2(x)-\Phi_2(y)|\leq C_{lip}\,|x-y| \text{ for } x,y\in \mathbb{R}.
\end{align*}
In particular, $\Phi_1\in C^1(\mathbb{R})$.

\item $f\in L^q(\Omega;H)$ and $g\in L^q(\Omega;L^2(\Gamma_2))$ for some $q\in (1,+\infty).$

\item $\overline{a}:\mathcal{D}\rightarrow (0,+\infty)$ is deterministic, continuous and there exist constants $\overline{a}_+,\overline{a}_->0$ with $\overline{a}_-\leq \overline{a}(x,y)\leq \overline{a}_+$ for $(x,y)\in\mathcal{D}$.

\item $l_1$ and $l_2$ are Lévy subordinators on $[0,D]$ which are independent of the GRFs $W_1$ and $W_2$. Further, we assume that we have approximations $l_1^\apprlevy,~l_2^\apprlevy$ of these processes and there exist constants $C_l>0$ and $\eta>1$ such that for every $s\in[1,\eta)$ it holds 
\begin{align*}
\mathbb{E}(|l_j(x)-l_j^\apprlevy(x)|^s)\leq C_l\varepsilon_l,
\end{align*}
for $\varepsilon_l >0$, $x\in[0,D]$ and $j=1,2$.
\end{enumerate}
\end{assumption}

The first assumption on the eigenpairs of the GRFs is natural (see \cite{AStudyOfElliptic} and \cite{QuasiMonteCarloFEMethodsForEllipticPDEsWithLognormalRandomCoefficients}). Assumption~\ref{ASS:CutProblemEigenvalues} \textit{ii} is necessary to be able to quantify the error of the approximation of the diffusion coefficient and Assumption~\ref{ASS:CutProblemEigenvalues} \textit{iii} guarantees the existence of a solution. The last assumption ensures that we can approximate the L\'evy subordinators with a controllable $L^s$-error, which can always be achieved using piecewise constant approximations of the processes under appropriate assumptions on the tails of the distribution of the Lévy subordinators, see~\cite[Assumption 3.6, Assumption 3.7 and Theorem 3.21]{ApproximationAndSimulation}.\\
For technical reasons we have to work under the following assumption on the integrability of the gradient of the solution $\nabla u_{K,A}$ of problem \eqref{EQ:EllProblemCutUpperCut} - \eqref{EQ:DiffCoeffDefiCutUpperCut}. This assumption is necessary for the proof of the convergence of the approximation to the solution $u_{K,A}$ in Theorem \ref{TH:ErrorBoundE2}. Its origin lies in the fact that we cannot approximate the Lévy subordinators in an $L^s(\Omega;L^\infty([0,D]))$-sense on the domain due to the discontinuities. There are several results on higher integrability of the gradient of the solution to an elliptic PDE of the form \eqref{EQ:EllProblemCutUpperCut} - \eqref{EQ:DiffCoeffDefiCutUpperCut} which guarantee the condition of Assumption \ref{ASS:IntegrabilityOfSolGradient}. We refer to \cite[Section 5.2]{SGRFPDE} and especially Remark 5.6 and Remark 5.7 therein for more details.

\begin{assumption}(See \cite[Assumption 5.5]{SGRFPDE})  \label{ASS:IntegrabilityOfSolGradient}
We assume that there exist constants $j_{reg}>0$ and $k_{reg}\geq 2$ such that
\begin{align*}
C_{reg}:=\mathbb{E}(\|\nabla u_{K,A}\|_{L^{2 + j_{reg}}(\mathcal{D})}^{k_{reg}})< +\infty. 
\end{align*}
\end{assumption}

We now turn to the final approximation of the diffusion coefficient using approximations $W_1^{(\varepsilon_W)}\approx W_1$, $W_2^{(\varepsilon_W)}\approx W_2$ of the GRFs and $l_1^{(\varepsilon_l)}\approx l_1$, $l_2^{(\varepsilon_l)}\approx l_2$ of the Lévy subordinators (see Assumption \ref{ASS:CutProblemEigenvalues}): We consider discrete grids $G_1^\apprgrf=\{(x_i,x_j)|~i,j=0,\dots,M_{\varepsilon_W}^{(1)}\}$ on $[0,D]^2$ and $G_2^\apprgrf=\{(y_i,y_j)|~i,j=0,\dots,M_{\varepsilon_W}^{(2)}\}$ on $[0,K]^2$ where $(x_i,~i=0,\dots,M_{\varepsilon_W}^{(1)})$ is an equidistant grid on $[0,D]$ with maximum step size $\varepsilon_W$ and $(y_i,~i=0,...,M_{\varepsilon_W}^{(2)})$ is an equidistant grid on $[0,K]$ with maximum step size $\varepsilon_W$. Further, let $W_1^\apprgrf$ and $W_2^\apprgrf$ be approximations of the GRFs $W_1,~W_2$ on the discrete grids $G_1^\apprgrf$ resp. $G_2^\apprgrf$ which are constructed by point evaluation of the random fields $W_1$ and $W_2$ on the grid points and linear interpolation between the them.

\noindent We approximate the diffusion coefficient $a_{K,A}$ from Equation \eqref{EQ:DiffCoeffDefiCutUpperCut} by $a_{K,A}^{(\varepsilon_W,\varepsilon_l)}:\Omega\times\mathcal{D}\rightarrow(0,+\infty)$ with
\begin{align}\label{EQ:DiffCoeffApprDef}
a_{K,A}^{(\varepsilon_W,\varepsilon_l)}(x,y) = \chi_{A}\Big(\overline{a}(x,y) + \Phi_1(W_1^\apprgrf(x,y)) + \Phi_2(W_2^\apprgrf(\chi_K(l_1^\apprlevy(x)),\chi_K(l_2^\apprlevy(y))))\Big)
\end{align}
for $(x,y)\in\mathcal{D}$. 
Further, we denote by $u_{K,A}^{(\varepsilon_W,\varepsilon_l)}\in L^r(\Omega;V)$, with $r\in[1, q)$, the weak solution to the corresponding elliptic problem
   \begin{align}\label{EQ:EllProblemCutAppr}
-\nabla\cdot (a_{K,A}^{(\varepsilon_W,\varepsilon_l)}(\omega,\underline{x})\nabla u_{K,A}^{(\varepsilon_W,\varepsilon_l)}(\omega,\underline{x}))=f(\omega,\underline{x}) \text{ in }\Omega\times\mathcal{D},
	\end{align}
with boundary conditions
	\begin{align}
	u_{K,A}^{(\varepsilon_W,\varepsilon_l)}(\omega,\underline{x})&=0 \text{ on } \Omega\times \Gamma_1,\label{EQ:EllProblemBCDCutAppr}\\
	a_{K,A}^{(\varepsilon_W,\varepsilon_l)}(\omega,\underline{x}) \overrightarrow{n}\cdot\nabla u_{K,A}^{(\varepsilon_W,\varepsilon_l)}(\omega,x)&=g(\omega,x) \text{ on } \Omega\times \Gamma_2.\label{EQ:EllProblemBCNCutAppr}
	\end{align}
	
\noindent Note that Theorem~\ref{TH:ExistenceOfSolutionSubord} also applies to the elliptic problem with coefficient $a_{K,A}^{(\varepsilon_W,\varepsilon_l)}$. We are now able to state the most important result on the convergence of the approximated solution $u_{K,A}^{(\varepsilon_W,\varepsilon_l)}$ to $u_{K,A}$. For a proof we refer the reader to \cite{SGRFPDE}.

\begin{theorem}(See \cite[Theorem 5.9]{SGRFPDE})\label{TH:ErrorBoundE2}
	Let $r\geq 2$ and $b,c\in[1,+\infty]$ be given such that it holds
	\begin{align*}
	rc\gamma\geq 2 \text{ and }2b\leq rc< \eta
	\end{align*}
	with a fixed real number $\gamma\in(0,min(1,\beta/(2\alpha))$. Here, the parameters $\eta, \alpha$ and $\beta$ are determined by the GRFs $W_1$, $W_2$ and the L\'evy subordinators $l_1$, $l_2$ (see Assumption \ref{ASS:CutProblemEigenvalues}). \\
	Let $m,n\in[1,+\infty]$ be real numbers such that 
	\begin{align*}
	\frac{1}{m} + \frac{1}{c} = \frac{1}{n} + \frac{1}{b}=1,
	\end{align*}
	and let $k_{reg}\geq 2$ and $j_{reg}>0$ be the regularity specifiers given by Assumption~\ref{ASS:IntegrabilityOfSolGradient}.
	If it holds that
	\begin{align*}
	n<1+\frac{j_{reg}}{2} \text{ and } rm< k_{reg},
	\end{align*}
	then the approximated solution $u_{K,A}^{(\varepsilon_W,\varepsilon_l)}$ converges to the solution $u_{K,A}$ of the truncated problem for $\varepsilon_W,\varepsilon_l\rightarrow 0$ and it holds
	\begin{align*}
	\|u_{K,A}-u_{K,A}^{(\varepsilon_W,\varepsilon_l)}\|_{L^r(\Omega;V)}&\leq  C(\overline{a}_-,\mathcal{D},r)C_{reg}\|a_{K,A}^{(\varepsilon_W,\varepsilon_l)}-a_{K,A}\|_{L^{rc}(\Omega;L^{2b}(\mathcal{D}))}\\
	&\leq  C_{reg}C(\overline{a}_-,\mathcal{D},r)(\varepsilon_W^\gamma + \varepsilon_l^\frac{1}{rc}).
	\end{align*}
	\end{theorem}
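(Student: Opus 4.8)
The plan is to split the stated double inequality into two essentially independent parts: a stability estimate controlling the solution difference by the coefficient difference (the first inequality), and a direct estimate of the coefficient difference in the mixed norm $L^{rc}(\Omega;L^{2b}(\mathcal{D}))$ (the second inequality). The two are then chained together.

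For the first inequality I would start from the two weak formulations \eqref{EQ:WeakFormProblem} for the coefficients $a_{K,A}^{(\varepsilon_W,\varepsilon_l)}$ and $a_{K,A}$, which share the same functional $F_\omega$. Subtracting them and inserting the mixed term $\int_\mathcal{D} a_{K,A}^{(\varepsilon_W,\varepsilon_l)}\nabla u_{K,A}\cdot\nabla v\,d\underline{x}$ gives, for every $v\in V$,
\begin{align*}
\int_\mathcal{D} a_{K,A}^{(\varepsilon_W,\varepsilon_l)}\nabla(u_{K,A}^{(\varepsilon_W,\varepsilon_l)}-u_{K,A})\cdot\nabla v\,d\underline{x} = -\int_\mathcal{D}(a_{K,A}^{(\varepsilon_W,\varepsilon_l)}-a_{K,A})\nabla u_{K,A}\cdot\nabla v\,d\underline{x}.
\end{align*}
Testing with $v=u_{K,A}^{(\varepsilon_W,\varepsilon_l)}-u_{K,A}$ and using the uniform lower bound $a_{K,A}^{(\varepsilon_W,\varepsilon_l)}\geq\overline{a}_-$ for coercivity bounds $\overline{a}_-\|\nabla(u_{K,A}^{(\varepsilon_W,\varepsilon_l)}-u_{K,A})\|_{L^2(\mathcal{D})}^2$ by the right-hand side. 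A three-factor Hölder inequality in space with exponents $(2b,2n,2)$, which are conjugate since $\tfrac1n+\tfrac1b=1$, separates the coefficient difference in $L^{2b}(\mathcal{D})$, the reference gradient $\nabla u_{K,A}$ in $L^{2n}(\mathcal{D})$, and the difference gradient in $L^2(\mathcal{D})$; one power of the latter cancels. As $\Gamma_1$ has positive measure, the Poincaré–Friedrichs inequality passes from $\|\nabla\cdot\|_{L^2}$ to $\|\cdot\|_V$. Taking the $L^r(\Omega)$-norm and applying Hölder in $\omega$ with the conjugate pair $(c,m)$ splits the expectation into $\|a_{K,A}^{(\varepsilon_W,\varepsilon_l)}-a_{K,A}\|_{L^{rc}(\Omega;L^{2b}(\mathcal{D}))}$ times $\|\nabla u_{K,A}\|_{L^{rm}(\Omega;L^{2n}(\mathcal{D}))}$. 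Since $2n<2+j_{reg}$ and $rm<k_{reg}$, the latter factor is bounded by $C_{reg}$ (up to constants) via Assumption~\ref{ASS:IntegrabilityOfSolGradient} and Jensen's inequality, yielding the first inequality.

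For the second inequality I would estimate the coefficient difference directly. Since $\chi_A$ is $1$-Lipschitz it can be dropped, and the difference splits into a $\Phi_1$- and a $\Phi_2$-contribution. For the $\Phi_1$-term I would use the mean value theorem with $|\Phi_1'(x)|\leq\phi\exp(\psi|x|)$ and control the exponential factor by Gaussian exponential moments (Fernique's theorem), while the pointwise GRF interpolation error $\|W_1-W_1^{(\varepsilon_W)}\|$ supplies the rate $\varepsilon_W^\gamma$. For the $\Phi_2$-term I would use that $\Phi_2$ is $C_{lip}$-Lipschitz and split the argument difference by the triangle inequality into the GRF interpolation error of $W_2$, again of order $\varepsilon_W^\gamma$, and a subordinator-induced error, which I bound using the Lipschitz continuity of the linear interpolant $W_2^{(\varepsilon_W)}$ (gradient controlled through $C_e i^\alpha$) and the $1$-Lipschitz property of $\chi_K$, reducing it to $|l_j-l_j^{(\varepsilon_l)}|$.

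The interpolation rate $\varepsilon_W^\gamma$ I would obtain from the \KL expansion: bounding each eigenfunction interpolation error by interpolating with exponent $\gamma$ between the $L^\infty$-bound $C_e$ and the first-order bound $\varepsilon_W C_e i^\alpha$, the expected squared error is controlled by $\varepsilon_W^{2\gamma}\sum_i\lambda_i i^{2\alpha\gamma}$, which converges precisely because $\gamma\leq\beta/(2\alpha)$ and $\sum_i\lambda_i i^\beta<+\infty$; Gaussian moment equivalence then lifts this to the $L^{rc}(\Omega)$-norm. The subordinator error enters only pointwise in space through Assumption~\ref{ASS:CutProblemEigenvalues}~\textit{v}, giving $\mathbb{E}(|l_j(x)-l_j^{(\varepsilon_l)}(x)|^{rc})^{1/(rc)}\leq(C_l\varepsilon_l)^{1/(rc)}$ uniformly in $x$, which requires $rc<\eta$. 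I expect the main obstacle to be the order of integration for this subordinator term: because the subordinator approximation cannot be controlled uniformly in space, the evaluation points of $W_2$ jump, so the $L^{rc}(\Omega)$-norm must be taken before the $L^{2b}(\mathcal{D})$-norm. This is exactly where the hypothesis $2b\leq rc$ enters, via Minkowski's integral inequality, allowing the exchange of the spatial and probabilistic norms and thus the application of the pointwise bound, which delivers the rate $\varepsilon_l^{1/(rc)}$. Collecting the $\varepsilon_W^\gamma$ and $\varepsilon_l^{1/(rc)}$ contributions gives the second inequality.
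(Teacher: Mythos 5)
The paper does not actually prove Theorem \ref{TH:ErrorBoundE2}; it is imported verbatim from \cite[Theorem 5.9]{SGRFPDE} ("For a proof we refer the reader to..."), so your attempt can only be judged against the structure that the hypotheses force. Your architecture is the right one, and the first inequality is handled correctly and completely in outline: the perturbation identity from the two weak formulations, coercivity with constant $\overline{a}_-$, the three-factor spatial H\"older inequality with exponents $(2b,2n,2)$ (conjugate because $\tfrac1n+\tfrac1b=1$), Poincar\'e, the $(c,m)$-H\"older inequality in $\omega$, and the absorption of $\|\nabla u_{K,A}\|_{L^{rm}(\Omega;L^{2n}(\mathcal{D}))}$ via $2n<2+j_{reg}$ and $rm<k_{reg}$ account for every hypothesis that inequality needs. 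For the second inequality, dropping $\chi_A$ by $1$-Lipschitzness, the Fernique argument for the $\Phi_1$-term, the Karhunen--Lo\`eve interpolation bound $\min(2C_e,\,C_e i^\alpha\varepsilon_W)\le 2C_e(i^\alpha\varepsilon_W)^\gamma$ with $2\alpha\gamma\le\beta$, and the role of $2b\le rc$ through Minkowski's integral inequality are all correctly identified.

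The genuine gap is your treatment of the subordinator-induced term. You bound the difference of $W_2^{(\varepsilon_W)}$ at the exact and approximated subordinated points by the pathwise Lipschitz constant of the interpolant times $|l_j-l_j^{(\varepsilon_l)}|$. That Lipschitz constant is $\sum_i\sqrt{\lambda_i^{(2)}}\,|\xi_i|\,\|\nabla e_i^{(2)}\|_{L^\infty}\lesssim\sum_i\sqrt{\lambda_i^{(2)}}\,|\xi_i|\,i^\alpha$, and the hypothesis $\sum_i\lambda_i^{(2)} i^\beta<\infty$ does not make $\sum_i\sqrt{\lambda_i^{(2)}}\,i^\alpha$ finite unless $\beta$ exceeds roughly $2\alpha+2$; the theorem explicitly permits $\beta\le 2\alpha$ (in which case $\gamma<\beta/(2\alpha)<1$), and in that regime your bound is vacuous. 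A symptom of this is that your sketch never invokes the hypothesis $rc\gamma\ge 2$. The repair is to avoid pathwise Lipschitz control altogether: condition on the subordinators (independent of $W_2$ by Assumption \ref{ASS:CutProblemEigenvalues} \textit{v}), use the mean-square H\"older estimate $\mathbb{E}\big(|W_2(p)-W_2(q)|^2\big)\le 4C_e^2C_\lambda\|p-q\|_2^{2\gamma}$ obtained from the same $\min$-interpolation of the eigenfunctions, lift it to $\mathbb{E}\big(|W_2(p)-W_2(q)|^{rc}\big)\lesssim\|p-q\|_2^{\gamma rc}$ by Gaussian moment equivalence, and then apply the subordinator moment bound at the exponent $\gamma rc$, which lies in $[1,\eta)$ precisely because $rc\gamma\ge 2$ and $rc<\eta$; this yields $\mathbb{E}\big(\|p-q\|_2^{\gamma rc}\big)\le C\varepsilon_l$ and hence exactly the stated rate $\varepsilon_l^{1/(rc)}$.
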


This result is essential since it guarantees the convergence of the approximated solution $u_{K,A}^{(\varepsilon_W,\varepsilon_l)}$ to the solution $u_{K,A}$ with a controllable upper bound on the error. Further, the error estimate given by Theorem \ref{TH:ErrorBoundE2} will be used in the error equilibration for the MLMC estimator in Section \ref{sec:MLMC}. It allows to balance the errors resulting from the approximation of the diffusion coefficient and the Finite Element (FE) error resulting from the pathwise  numerical approximation of the PDE solution.


\section{Pathwise Finite Element approximation}\label{sec:approx_solution}
In this section, we describe the numerical method which is used to compute pathwise  approximations of the solution to the considered elliptic PDE following \cite[Section 6]{SGRFPDE}. We use a FE approach with standard triangulations and sample-adapted triangulations of the spatial domain, which is described in the following.
\subsection{The standard pathwise Finite Element approximation}\label{subs:PathwiseFE}
 We approximate the solution $u$ to problem~\eqref{EQ:EllProblem} - \eqref{EQ:EllProblemBCN} with diffusion coefficient $a$ given by Equation~\eqref{EQ:DiffCoeffDefi} using a pathwise FE approximation of the solution $u_{K,A}^{(\varepsilon_W,\varepsilon_l)}$ of problem~\eqref{EQ:EllProblemCutAppr} - \eqref{EQ:EllProblemBCNCutAppr} with the approximated diffusion coefficient $a_{K,A}^{(\varepsilon_W,\varepsilon_l)}$ given by \eqref{EQ:DiffCoeffApprDef}. Therefore, for almost all $\omega\in\Omega$, we aim to approximate the function $u_{K,A}^{(\varepsilon_W,\varepsilon_l)}(\omega,\cdot)\in V$ such that it holds
	\begin{align}\label{EQ:VariationalProblemApprSol}
	B_{a_{K,A}^{(\varepsilon_W,\varepsilon_l)}(\omega)} (u_{K,A}^{(\varepsilon_W,\varepsilon_l)}(\omega,\cdot), v) &:=\int_{\mathcal{D}}a_{K,A}^{(\varepsilon_W,\varepsilon_l)}(\omega,\underline{x})\nabla	u_{K,A}^{(\varepsilon_W,\varepsilon_l)}(\omega,\underline{x})\cdot\nabla v(\underline{x})d\underline{x} \notag \\ 
	&=\int_\mathcal{D}f(\omega,\underline{x})v(\underline{x})d\underline{x} + \int_{\Gamma_2} g(\omega,\underline{x})[Tv](\underline{x})d\underline{x}=:F_\omega(v),
	\end{align}
	for every $v\in V$ with fixed approximation parameters $K,A,\varepsilon_W,\varepsilon_l$. We compute a numerical approximation of the solution to this variational problem using a standard Galerkin approach with linear elements: assume $\mathcal{V}=(V_\ell,~\ell\in\mathbb{N}_0)$ is a sequence of finite-dimensional subspaces $V_\ell\subset	V$ with increasing $\dim(V_\ell)=d_\ell$. Further, we denote by $(h_\ell,~\ell\in\mathbb{N}_0)$ the corresponding refinement sizes which are assumed to converge monotonically to zero for $\ell\rightarrow \infty$. Let $\ell\in\mathbb{N}_0$ be fixed and denote by $\{v_1^{(\ell)},\dots,v_{d_\ell}^{(\ell)}\}$ a basis of $V_\ell$. The (pathwise) discrete version of~\eqref{EQ:VariationalProblemApprSol} reads: Find $u_{K,A,\ell}^{(\varepsilon_W,\varepsilon_l)}(\omega,\cdot)\in V_\ell$ such that \begin{align*}
	B_{a_{K,A}^{(\varepsilon_W,\varepsilon_l)}(\omega)}(u_{K,A,\ell}^{(\varepsilon_W,\varepsilon_l)}(\omega,\cdot),v_\ell^{(i)})=F_\omega(v_\ell^{(i)}) \text{ for all } i=1,\dots,d_\ell.
\end{align*}		
Expanding the function $u_{K,A,\ell}^{(\varepsilon_W,\varepsilon_l)}(\omega,\cdot)$ with respect to the basis $\{v_1^{(\ell)},\dots,v_{d_\ell}^{(\ell)}\}$ yields the representation
\begin{align*}
u_{K,A,\ell}^{(\varepsilon_W,\varepsilon_l)}(\omega,\cdot)=\sum_{i=1}^{d_\ell} c_iv_i^{(\ell)},
\end{align*}
where the coefficient vector $\textbf{c}=(c_1,\dots,c_{d_\ell})^T\in\mathbb{R}^{d_\ell}$ is determined by the linear equation system 
\begin{align*}
\textbf{B}(\omega)\textbf{c}=\textbf{F}(\omega),
\end{align*}
with a stochastic stiffness matrix $\textbf{B}(\omega)_{i,j}=B_{a_{K,A}^{(\varepsilon_W,\varepsilon_l)}(\omega)}(v_i^{(\ell)},v_j^{(\ell)})$ and load vector $\mathbf{F}(\omega)_i=F_\omega (v_i^{(\ell)})$ for $i,j=1,\dots,d_\ell$.

	Let $(\mathcal{K}_\ell,~\ell\in \mathbb{N}_0)$ be a sequence of triangulations on $\mathcal{D}$ and denote by $\theta_\ell>0$ the minimum interior angle of all triangles in $\mathcal{K}_\ell$. We assume $\theta_\ell\geq \theta>0$ for a positive constant $\theta$ and define the maximum diameter of the triangulation $\mathcal{K}_\ell$ by 
	$h_\ell:=\underset{K\in\mathcal{K}_\ell}{\max}\, diam(K),$
	for $\ell\in\mathbb{N}_0$ as well as the finite dimensional subspaces by 
	$
	V_\ell:=\{v\in V~|~v|_K\in\mathcal{P}_1,K\in \mathcal{K}_\ell\},$
	where $\mathcal{P}_1$ denotes the space of all polynomials up to degree one. If we assume that for $\mathbb{P}-$almost all $\omega\in\Omega$ it holds $u_{K,A}^{(\varepsilon_W,\varepsilon_l)}(\omega,\cdot)\in H^{1+\kappa_a}(\mathcal{D})$ for some positive number $\kappa_a>0$, and that  there exists a finite bound $\|u_{K,A}^{(\varepsilon_W,\varepsilon_l)}\|_{L^2(\Omega;H^{1+\kappa_a}(\mathcal{D}))}\leq C_u=C_u(K,A)$ for the fixed approximation parameters $K,A$, we immediately obtain the following estimate using C\'ea's lemma (see \cite[Section 4]{AStudyOfElliptic}, \cite[Section 6]{SGRFPDE}, \cite[Chapter 8]{EllipticDifferentialEquations})
	\begin{align*}
	\|u_{K,A}^{(\varepsilon_W,\varepsilon_l)}-u_{K,A,\ell}^{(\varepsilon_W,\varepsilon_l)}\|_{L^2(\Omega;V)}\leq C_{\theta,\mathcal{D}} \frac{A}{\overline{a}_-}C_u h_\ell^{\min(\kappa_a,1)}.
	\end{align*}	
	By construction of the subordinated GRF, we always obtain an interface geometry with fixed angles and bounded jump height in the diffusion coefficient, which have great influence on the solution regularity, see e.g.~\cite{RegularityResultsForLaplaceInterfaceProblemsInTroDimensions}. Note that, for general deterministic interface problems, one obtains a pathwise discretization error of order $\kappa_a \in(1/2,1)$ and in general one cannot expect the full order of convergence $\kappa_a=1$ without special treatment of the discontinuities of the diffusion coefficient (see \cite{TheFiniteElementMethodForELlipticEquationsWithDiscontinuousCoefficients} and~\cite{AStudyOfElliptic}). The convergence may be improved by the use of sample-adapted triangulations.
	
	\subsection{Sample-adapted triangulations}\label{subsec:SampleAdaptedFE}
	In~\cite{AStudyOfElliptic}, the authors suggest sample-adapted triangulations to improve the convergence of the FE approximation for elliptic jump diffusion coefficients. This approach is also used in this paper and the convergence of the corresponding FE method is compared to the performance with the use of standard triangulations. The construction of the sample-adapted triangulations is explained in the following. Consider a fixed $\omega\in \Omega$ and assume that the discontinuities of the diffusion coefficient are described by the partition $\mathcal{T}(\omega)=(\mathcal{T}_i,~i=1,\dots, \tau(\omega))$ of the domain $\mathcal{D}$ with $\tau(\omega)\in \mathbb{N}$ and $\mathcal{T}_i\subset \mathcal{D}$. Assume that $\mathcal{K}_\ell(\omega)$ is a triangulation of $\mathcal{D}$ which is adjusted to the partition $\mathcal{T}(\omega)$ in the sense that for every $i=1,\dots,\tau(\omega)$ it holds
	\begin{align*}
	\mathcal{T}_i\subset \bigcup_{\kappa\in\mathcal{K}_\ell(\omega)}\kappa \text{ and }\hat{h}_\ell(\omega):=\underset{K\in\mathcal{K}_\ell(\omega)}{\max}\, diam(K)\leq \overline{h}_\ell,
	\end{align*}
	for all $\ell\in\mathbb{N}_0$, where $(\overline{h}_\ell,~\ell\in\mathbb{N}_0)$ is a deterministic, decreasing sequence of refinement thresholds which converges to zero.  We denote by $\hat{V}_\ell(\omega)\subset V$ the corresponding finite-dimensional subspaces with dimension $\hat{d}_\ell(\omega)\in\mathbb{N}$.  Figure \ref{fig:AdaptTrian} illustrates the adapted triangulation for a sample of the diffusion coefficient where we used a Poisson($5$)-subordinated  Matérn-1.5-GRF.
	
	\begin{figure}[ht]
	\centering
	\subfigure{\includegraphics[scale=0.4]{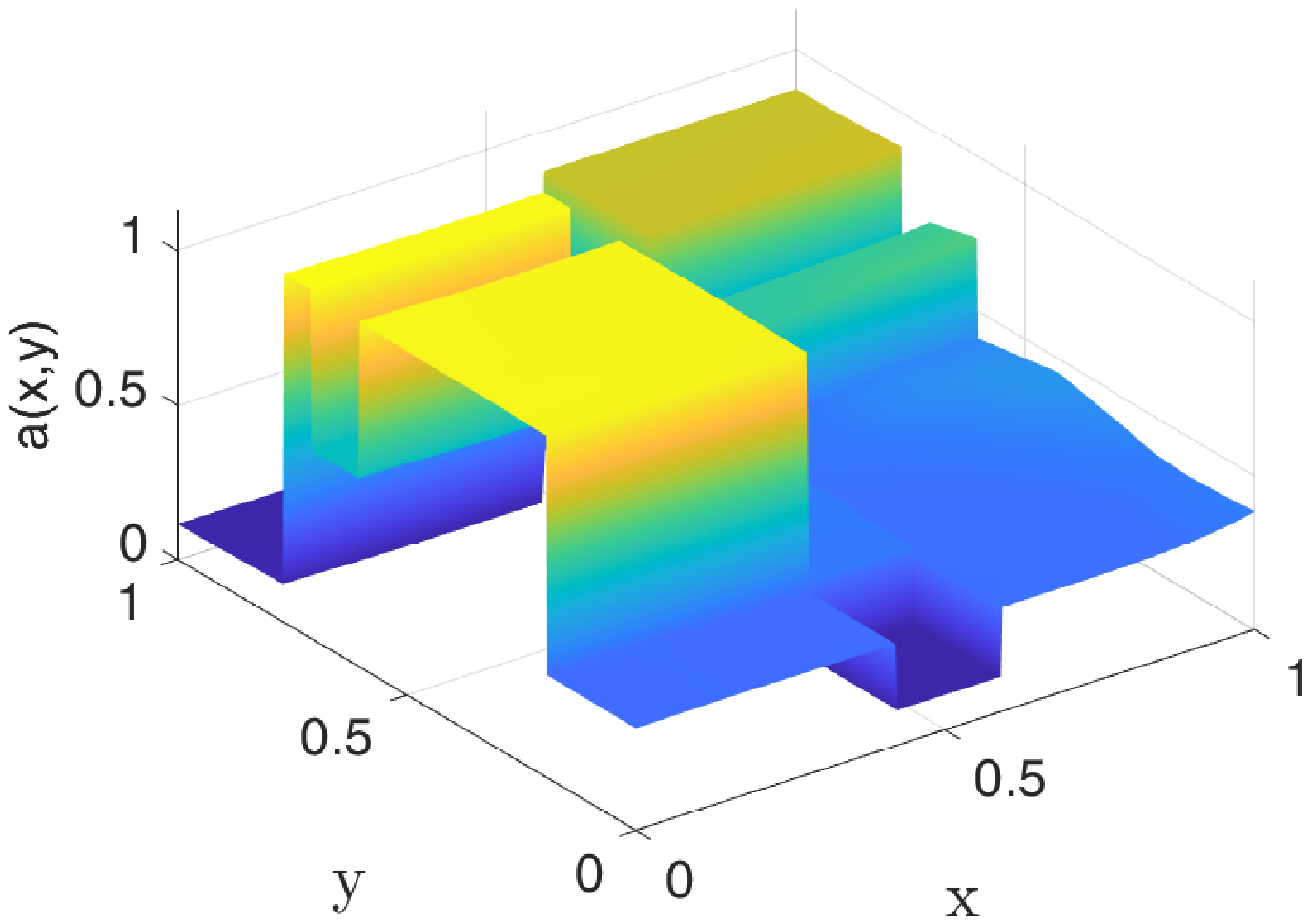}}
	\subfigure{\includegraphics[scale=0.4]{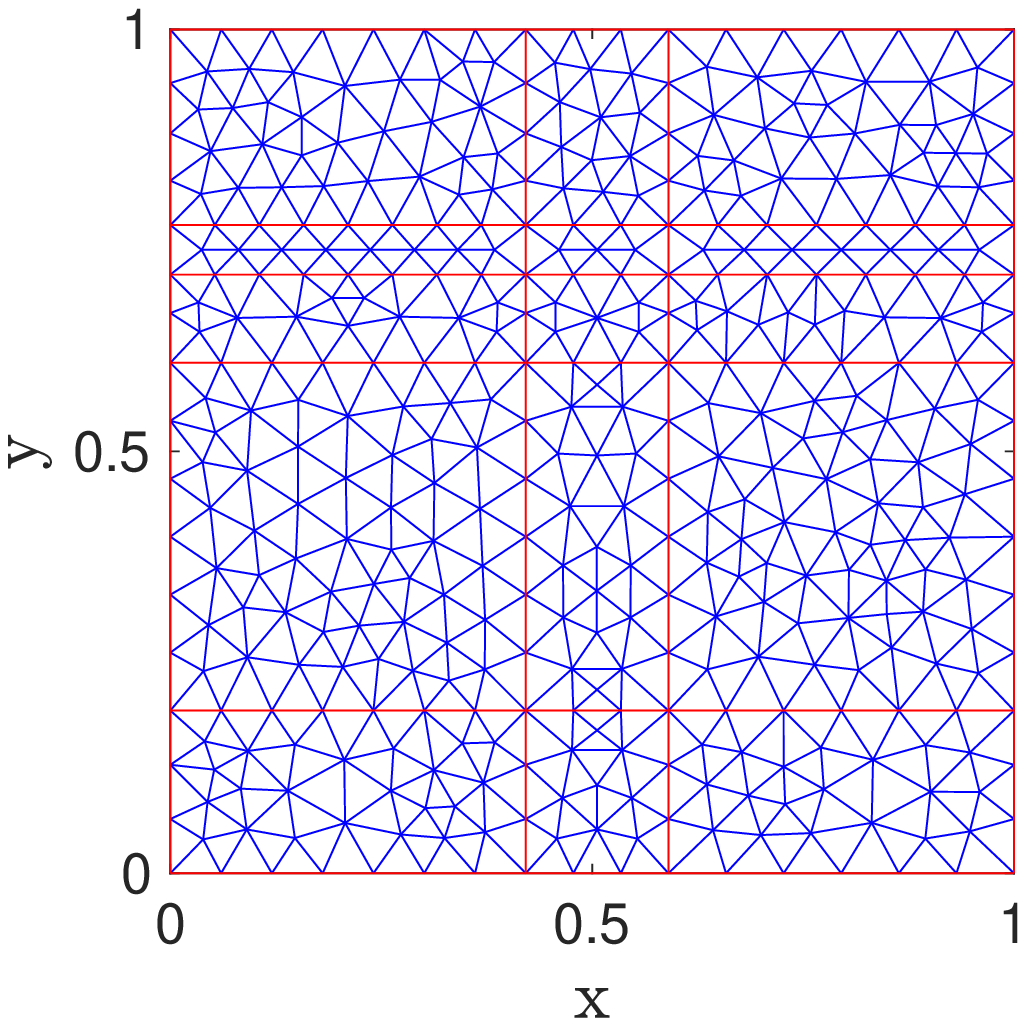}}
	\caption{Sample of the diffusion coefficient using a Poisson-subordinated Mat\'ern-1.5-GRF (left) with corresponding sample-adapted triangulation (right).}
	\label{fig:AdaptTrian}
	\end{figure}

	The sample-adapted approach leads to an improved sample-wise convergence rate for the elliptic PDE with discontinuous diffusion coefficient (see e.g. \cite[Section 4.1]{AStudyOfElliptic}). This is particularly true in the situation of jump diffusion coefficients with polygonal jump geometry, which is the case for the diffusion coefficients considered in this paper (see Figure \ref{fig:AdaptTrian}, \cite{AStudyOfElliptic}, \cite{FEMAndTheirConvergenceForEllipticAndParabolicProblems}, \cite[Section 7]{SGRFPDE}). \\
	\noindent While mean squared convergence rates cannot be derived theoretically  in our general setting due to the stochastic regularity of the PDE solutions, in practice one at least recovers the convergence rates of the deterministic jump diffusion problem in the strong error, which also has been investigated numerically in \cite{SGRFPDE}. This observation, together with the comments in the end of Subsection \ref{subs:PathwiseFE}, motivate the following assumption for the remaining theoretical analysis (see~\cite[Assumption 6.2]{SGRFPDE}).
	\begin{assumption}\label{ASS:ConvRateFEMMeanSquareError}
	There exist deterministic constants $\hat{C}_{u}, C_{u},\hat{\kappa}_a,\kappa_a>0$ such that for any $\varepsilon_W,\varepsilon_l>0$ and any $\ell\in\mathbb{N}_0$, the FE approximation errors of $\hat{u}_{K,A,\ell}^{(\varepsilon_W,\varepsilon_l)}\approx u_{K,A}^{(\varepsilon_W,\varepsilon_l)}$ in the (sample-adapted) subspaces $\hat{V}_\ell$, respectively $u_{K,A,\ell}^{(\varepsilon_W,\varepsilon_l)}\approx u_{K,A}^{(\varepsilon_W,\varepsilon_l)}$ in $V_\ell$, are bounded by
	\begin{align*}
	\|u_{K,A}^{(\varepsilon_W,\varepsilon_l)} - \hat{u}_{K,A,\ell}^{(\varepsilon_W,\varepsilon_l)}\|_{L^2(\Omega;V)}&\leq \hat{C}_{u} \mathbb{E}(\hat{h}_\ell^{2\hat{\kappa}_a})^{1/2} , \text{ respectively,}\\
	\|u_{K,A}^{(\varepsilon_W,\varepsilon_l)} - u_{K,A,\ell}^{(\varepsilon_W,\varepsilon_l)}\|_{L^2(\Omega;V)}&\leq C_{u}h_\ell^{\kappa_a},
	\end{align*}
	where the constants $\hat{C}_{u},C_{u}$ may depend on $a,f,g,K,A$ but are independent of $\hat{h}_\ell,h_\ell,\hat{\kappa}_a$ and $\kappa_a$.
	\end{assumption}	
	
	
 \section{MLMC estimation of the solution}\label{sec:MLMC}

In this section we construct a multilevel Monte Carlo (MLMC) estimator for the expectation $\mathbb{E}(u_{K,A})$ of the PDE solution and prove an a-priori bound for the approximation error. We start with the introduction of a general singlelevel Monte Carlo (SLMC) estimation since the MLMC estimator is an extension of this approach. \\[0.5em]
\noindent The next lemma follows by the definition of the inner product on the Sobolev space $H^1(\mathcal{D}) =: V$ and will be useful in our theoretical investigations.
\begin{lemma}\label{LE:IndepRVsExpectations}
For independent, centered $V$-valued random variables $Z_1$ and $Z_2$ it holds
\begin{align*}
\mathbb{E}((Z_1,Z_2)_V ) = 0.
\end{align*}
\begin{proof}
We use the definition of the inner product on $V$ together with the independence of $Z_1$ and $Z_2$ to calculate
\begin{align*}
\mathbb{E}((Z_1,Z_2)_V) & = \int_\mathcal{D} \mathbb{E}(\partial_x Z_1)\mathbb{E}( \partial_x Z_2) + \mathbb{E}(\partial_yZ_1)\mathbb{E}(\partial_yZ_2) + \mathbb{E}(Z_1)\mathbb{E}(Z_2) d(x,y) = 0.
\end{align*}
\end{proof}

\end{lemma}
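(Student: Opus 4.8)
The plan is to reduce the claim to the scalar case by writing out the $H^1(\mathcal{D})$-inner product explicitly and then interchanging the expectation over $\Omega$ with the spatial integral over $\mathcal{D}$. Recall that for $v,w \in V = H^1(\mathcal{D})$ one has $(v,w)_V = \int_\mathcal{D} \partial_x v\,\partial_x w + \partial_y v\,\partial_y w + vw\,d(x,y)$. The first step is to verify that $(Z_1,Z_2)_V$ is genuinely integrable so that its expectation is well-defined: by the Cauchy--Schwarz inequality in $V$ one has $|(Z_1,Z_2)_V| \le \|Z_1\|_V\|Z_2\|_V$, and a further application of Cauchy--Schwarz on $\Omega$ gives $\mathbb{E}(|(Z_1,Z_2)_V|) \le \|Z_1\|_{L^2(\Omega;V)}\|Z_2\|_{L^2(\Omega;V)} < \infty$, where finiteness relies on the (implicit) assumption $Z_1,Z_2\in L^2(\Omega;V)$ that underlies the centering hypothesis.

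Having secured integrability, I would apply Fubini's theorem on the product space $\Omega \times \mathcal{D}$ to exchange the order of integration, obtaining $\mathbb{E}((Z_1,Z_2)_V) = \int_\mathcal{D} \mathbb{E}(\partial_x Z_1\,\partial_x Z_2) + \mathbb{E}(\partial_y Z_1\,\partial_y Z_2) + \mathbb{E}(Z_1 Z_2)\,d(x,y)$. The next step invokes independence: since $Z_1$ and $Z_2$ are independent $V$-valued random variables and the differential operators are measurable maps on $V$, the scalar factors inside each product are independent, so that each expectation factorizes, e.g.\ $\mathbb{E}(\partial_x Z_1\,\partial_x Z_2) = \mathbb{E}(\partial_x Z_1)\,\mathbb{E}(\partial_x Z_2)$, and analogously for the remaining two terms.

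Finally I would apply the centering hypothesis. Since $\partial_x,\partial_y : V \to L^2(\mathcal{D})$ and the embedding $V \hookrightarrow L^2(\mathcal{D})$ are bounded linear operators, they commute with the Bochner integral, whence $\mathbb{E}(\partial_x Z_i) = \partial_x \mathbb{E}(Z_i) = 0$, $\mathbb{E}(\partial_y Z_i) = \partial_y \mathbb{E}(Z_i) = 0$ and $\mathbb{E}(Z_i) = 0$ as elements of $L^2(\mathcal{D})$, for $i=1,2$. Substituting these vanishing factors makes the integrand identically zero, so $\mathbb{E}((Z_1,Z_2)_V) = 0$.

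The step I expect to be most delicate is combining the Fubini interchange with the transfer of independence from the abstract $V$-valued setting to the scalar factors appearing under the spatial integral: one must be careful that independence of $Z_1$ and $Z_2$ really yields the pointwise factorization after integrating in space, since pointwise evaluation of $L^2$-functions is not literally defined. A cleaner alternative that bypasses these technicalities is to argue directly at the Hilbert-space level: conditioning on $Z_1$ and using that $(Z_1,\cdot)_V$ is a bounded linear functional, which therefore commutes with the (conditional) Bochner integral, one obtains $\mathbb{E}((Z_1,Z_2)_V) = \mathbb{E}((Z_1,\mathbb{E}(Z_2))_V) = \mathbb{E}((Z_1,0)_V) = 0$ directly from $\mathbb{E}(Z_2) = 0$. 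I would likely feature this as the primary argument, keeping the componentwise computation as an explicit verification.
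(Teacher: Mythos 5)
Your proposal is correct, and its componentwise computation is exactly the paper's proof: the paper writes out the $H^1$-inner product, factorizes each expectation under the spatial integral by independence, and concludes from the centering --- all in one line, without spelling out the integrability check or the Fubini interchange that you (rightly) supply. The one genuine difference is the argument you say you would feature as primary: conditioning on $Z_1$ and using that $(Z_1,\cdot)_V$ is a bounded linear functional commuting with the conditional Bochner integral, so that $\mathbb{E}((Z_1,Z_2)_V)=\mathbb{E}((Z_1,\mathbb{E}(Z_2))_V)=0$. That route is cleaner and more general: it works for any separable Hilbert space $V$ without reference to a concrete inner-product formula, and it sidesteps the delicate point you correctly flag, namely that transferring independence of the $V$-valued variables to pointwise-a.e.\ factorization of scalar products of $L^2$-representatives requires a Fubini argument on $\Omega\times\mathcal{D}$ that the paper glosses over. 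The paper's computation buys concreteness at the cost of exactly that technicality; your abstract argument buys rigor and generality at the cost of invoking conditional expectation for Banach-space-valued variables. Either is acceptable here, and your write-up contains both.
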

Let $(u^{(i)},~i\in \mathbb{N})\subset L^2(\Omega;V)$ be a sequence of $i.i.d.$ random variables and $M\in\mathbb{N}$ a fixed sample number. The \textit{singlelevel Monte Carlo} estimator for the approximation of the mean $\mathbb{E}(u^{(1)})$ is defined by 
\begin{align*}
E_M(u^{(1)}):=\frac{1}{M}\sum_{i=1}^M u^{(i)}\approx \mathbb{E}(u^{(1)}),
\end{align*}
and we have the following standard result (see also \cite{MLMCFEMEllPDE} and \cite{AStudyOfElliptic}).
\begin{lemma}\label{LE:SLMCstandardEst}
Let $M\in\mathbb{N}$ and $(u^{(i)},~i\in \mathbb{N})\subset L^2(\Omega;V)$ be a sequence of $i.i.d.$ random variables. It holds
\begin{align*}
\|\mathbb{E}(u^{(1)}) - E_M(u^{(1)})\|_{L^2(\Omega;V)} = \sqrt{\frac{Var(u^{(1)})}{M}} &\leq \frac{\|u^{(1)}\|_{L^2(\Omega;V)}}{\sqrt{M}} \text{ and }\\ \|E_M(u^{(1)})\|_{L^2(\Omega;V)} &\leq \|u^{(1)}\|_{L^2(\Omega;V)}.
\end{align*}
\end{lemma}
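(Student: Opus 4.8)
The plan is to reduce both estimates to the definition of the $L^2(\Omega;V)$-norm, exploiting the i.i.d.\ structure of the samples together with Lemma \ref{LE:IndepRVsExpectations}. First I would rewrite the estimation error by centering: setting $Z_i := u^{(i)} - \mathbb{E}(u^{(1)})$, the variables $(Z_i,~i=1,\dots,M)$ are i.i.d., centered and $V$-valued, and one has
\[
\mathbb{E}(u^{(1)}) - E_M(u^{(1)}) = -\frac{1}{M}\sum_{i=1}^M Z_i.
\]
The key step is then to expand the squared norm using the bilinearity of the inner product on $V$,
\[
\Big\|\tfrac{1}{M}\textstyle\sum_{i=1}^M Z_i\Big\|_{L^2(\Omega;V)}^2 = \frac{1}{M^2}\sum_{i,j=1}^M \mathbb{E}\big((Z_i,Z_j)_V\big).
\]
For the off-diagonal terms $i\neq j$ the variables $Z_i,Z_j$ are independent and centered, so Lemma \ref{LE:IndepRVsExpectations} forces $\mathbb{E}((Z_i,Z_j)_V)=0$; this is the one genuinely non-trivial ingredient and the very reason the centering is performed first. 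Each diagonal term equals $\mathbb{E}(\|Z_1\|_V^2) = Var(u^{(1)})$ by the identical distribution of the samples, leaving $M\cdot Var(u^{(1)})/M^2$, which gives the claimed equality after taking square roots.

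For the first inequality I would invoke the bias--variance identity $Var(u^{(1)}) = \|u^{(1)}\|_{L^2(\Omega;V)}^2 - \|\mathbb{E}(u^{(1)})\|_V^2 \leq \|u^{(1)}\|_{L^2(\Omega;V)}^2$, obtained by expanding $\|u^{(1)}-\mathbb{E}(u^{(1)})\|_{L^2(\Omega;V)}^2$, and then dividing by $M$. The second stated bound, $\|E_M(u^{(1)})\|_{L^2(\Omega;V)} \leq \|u^{(1)}\|_{L^2(\Omega;V)}$, is immediate from the triangle inequality in $L^2(\Omega;V)$ together with the fact that all samples share the common norm $\|u^{(i)}\|_{L^2(\Omega;V)} = \|u^{(1)}\|_{L^2(\Omega;V)}$, so that the factor $1/M$ cancels the $M$ summands.

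Since every step reduces to elementary Hilbert-space computations, I do not anticipate a serious obstacle; the only point requiring care is ensuring the cross terms vanish, which is precisely the content of Lemma \ref{LE:IndepRVsExpectations} and explains why the samples must be centered before the norm is expanded.
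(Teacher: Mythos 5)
Your proof is correct and follows exactly the route the paper intends: the paper states this lemma as a standard result without writing out the proof, but it places Lemma \ref{LE:IndepRVsExpectations} immediately before it precisely so that the off-diagonal terms $\mathbb{E}((Z_i,Z_j)_V)$ vanish after centering, which is the computation you carry out. The remaining steps (the bias--variance identity for the first inequality and the triangle inequality for the second) are the standard completions and are all valid.
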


One major disadvantage of the SLMC estimator described above is the slow convergence of the (statistical) error for increasing sample numbers $M$ (see Lemma \ref{LE:SLMCstandardEst} and \cite{MultilevelMonteCarloPathSimulation}). Multilevel Monte Carlo (MLMC) uses multigrid concepts to reduce the computational complexity for the estimation of the mean compared to the singlelevel approach. The idea is to compute samples of FE approximations with different accuracy where one takes many samples of FE approximations with lower accuracy (and lower computationally costs) and less samples of FE approximations with higher accuracy (and higher computational cost), see also \cite{MultilevelMonteCarloPathSimulation} and \cite{GilesMLMCMethods}.\\
 
 \noindent For fixed parameters $K,A$ the goal is to approximate the value $\mathbb{E}(u_{K,A})$. For ease of notation, we focus here on the sample-adapted discretization with the corresponding approximation $\hat{u}_{K,A,\ell}^{(\varepsilon_W,\varepsilon_l)}$ with average refinement parameter $\mathbb{E}(\hat{h}_\ell^{2\hat{\kappa}_a})^{1/2}$ and convergence rate $\hat{\kappa}_a$ in this section (see Assumption \ref{ASS:ConvRateFEMMeanSquareError}). However, the reader should always keep in mind that all results also hold in the case of standard triangulations where $\mathbb{E}(\hat{h}_\ell^{2\hat{\kappa}_a})^{1/2}$ should be replaced by $h_\ell^{\kappa_a}$.\\[0.5em]
\noindent Assume a maximum level $L\in\mathbb{N}$ is given. We consider finite-dimensional subspaces $(\hat{V}_\ell,~\ell=0,\dots,L)$ of $V$ with refinement sizes $\hat{h}_0>\dots>\hat{h}_L>0$ and approximation parameters $\varepsilon_{W,0}>\dots>\varepsilon_{W,L}$ for the GRFs and $\varepsilon_{l,0}>\dots>\varepsilon_{l,L}$ for the Lévy subordinators. Since we fix the parameters $K$ and $A$ in this analysis, we omit them in the following and use the notation $\hat{u}_{\varepsilon_{W,\ell},\varepsilon_{l,\ell},\ell}:=\hat{u}_{K,A,\ell}^{(\varepsilon_{W,\ell},\varepsilon_{l,\ell})}$ for the FEM approximation of $u_{K,A}^{(\varepsilon_{W,\ell},\varepsilon_{l,\ell})}$ on $\hat{V}_\ell$, for $\ell=-1,\dots,L$, where we set $\hat{u}_{\varepsilon_{W,-1},\varepsilon_{l,-1},-1}:=0$. If we expand the expectation on the finest level in a telescopic sum we obtain the following representation
 \begin{align}\label{EQ:MLMCTelescopicSum}
 \mathbb{E}(\hat{u}_{\varepsilon_{W,L},\varepsilon_{l,L},L})=\sum_{\ell=0}^L \mathbb{E}(\hat{u}_{\varepsilon_{W,\ell},\varepsilon_{l,\ell},\ell} - \hat{u}_{\varepsilon_{W,\ell-1},\varepsilon_{l,\ell-1},\ell-1}).
 \end{align}
 This motivates the multilevel Monte Carlo estimator, which estimates the left hand side of Equation \eqref{EQ:MLMCTelescopicSum} by singlelevel Monte Carlo estimations of each summand on the right hand side (see \cite{MultilevelMonteCarloPathSimulation}). To be precise, let $M_\ell$ be a natural number for $\ell=0,...,L$. The multilevel Monte Carlo estimator of $\hat{u}_{\varepsilon_{W,L},\varepsilon_{l,L},L}$ is then defined by
 \begin{align*}
 E^L(\hat{u}_{\varepsilon_{W,L},\varepsilon_{l,L},L})&:= \sum_{\ell=0}^L E_{M_\ell} (\hat{u}_{\varepsilon_{W,\ell},\varepsilon_{l,\ell},\ell} - \hat{u}_{\varepsilon_{W,\ell-1},\varepsilon_{l,\ell-1},\ell-1}),\\
 &= \sum_{\ell=0}^L \frac{1}{M_\ell}\sum_{i=1}^{M_\ell} (\hat{u}_{\varepsilon_{W,\ell},\varepsilon_{l,\ell},\ell}^{(i,\ell)} - \hat{u}_{\varepsilon_{W,\ell-1},\varepsilon_{l,\ell-1},\ell-1}^{(i,\ell)})
 \end{align*}
 where $(\hat{u}_{\varepsilon_{W,\ell},\varepsilon_{l,\ell},\ell}^{(i,\ell)})_{i=1}^{M_\ell}$ (resp. $(\hat{u}_{\varepsilon_{W,\ell-1},\varepsilon_{l,\ell-1},\ell-1}^{(i,\ell)})_{i=1}^{M_\ell}$)  are $M_\ell$ $i.i.d.$ copies of the random variable $\hat{u}_{\varepsilon_{W,\ell},\varepsilon_{l,\ell},\ell}$ (resp. $\hat{u}_{\varepsilon_{W,\ell-1},\varepsilon_{l,\ell-1},\ell-1}$) for $\ell=0,\dots,L$  (see also \cite{MultilevelMonteCarloPathSimulation}).
 The following result gives an a-priori bound on the MLMC error. Similar formulations can be found, for example, in \cite{MLMCFEMEllPDE}, \cite{MLMCForStochEllMultiscalePDEs} and \cite{AStudyOfElliptic}.

 \begin{theorem}\label{TH:MLMCEst}
 We set $r=2$ and assume $q>2$ in Assumption \ref{ASS:CutProblemEigenvalues}. Further, let $b,c\geq 1$ be given such that Theorem \ref{TH:ErrorBoundE2} holds. For $L\in\mathbb{N}$, let $\hat{h}_\ell>0$, $M_\ell$, $\varepsilon_{W,\ell}>0$ and $\varepsilon_{l,\ell}>0$ be the level-dependent approximation parameters for $\ell=0,...,L$ such that $\hat{h}_{\ell},~ \varepsilon_{W,\ell},$ and $\varepsilon_{l,\ell}$ are decreasing with respect to $\ell$. It holds
 \begin{align*}
 \|\mathbb{E}(u_{K,A}) - E^L(\hat{u}_{\varepsilon_{W,L},\varepsilon_{l,L},L})\|_{L^2(\Omega;V)}\leq C &\Big(\varepsilon_{W,L}^\gamma + \varepsilon_{l,L}^\frac{1}{2c} + \mathbb{E}(\hat{h}_L^{2\hat{\kappa}_a})^{1/2} + \frac{1}{\sqrt{M_0}} \\
 &+\sum_{\ell=0}^{L-1} \frac{\varepsilon_{W,\ell}^\gamma + \varepsilon_{l,\ell}^\frac{1}{2c} + \mathbb{E}(\hat{h}_\ell^{2\hat{\kappa}_a})^{1/2}}{\sqrt{M_{\ell+1}}}\Big),
 \end{align*}
 where $C>0$ is a constant which is independent of $L$ and the level-dependent approximation parameters. Note that the numbers $\gamma>0$ and $c\geq 1$ are determined by the GRFs resp. the subordinators (cf. Theorem \ref{TH:ErrorBoundE2}).
 \end{theorem}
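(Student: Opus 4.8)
The plan is to split the total error by the triangle inequality into a deterministic \emph{bias} contribution $\|\mathbb{E}(u_{K,A}) - \mathbb{E}(\hat{u}_{\varepsilon_{W,L},\varepsilon_{l,L},L})\|_{L^2(\Omega;V)}$ and a \emph{statistical} contribution $\|\mathbb{E}(\hat{u}_{\varepsilon_{W,L},\varepsilon_{l,L},L}) - E^L(\hat{u}_{\varepsilon_{W,L},\varepsilon_{l,L},L})\|_{L^2(\Omega;V)}$, and to estimate the two separately.

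For the bias term, since $\mathbb{E}(u_{K,A} - \hat{u}_{\varepsilon_{W,L},\varepsilon_{l,L},L})$ is a deterministic element of $V$, Jensen's inequality gives
$$\|\mathbb{E}(u_{K,A} - \hat{u}_{\varepsilon_{W,L},\varepsilon_{l,L},L})\|_V \leq \|u_{K,A} - \hat{u}_{\varepsilon_{W,L},\varepsilon_{l,L},L}\|_{L^2(\Omega;V)}.$$
I would then insert the intermediate solution $u_{K,A}^{(\varepsilon_{W,L},\varepsilon_{l,L})}$ and apply the triangle inequality once more: the coefficient-approximation part is controlled by Theorem \ref{TH:ErrorBoundE2} with $r=2$ (legitimate since $q>2$ ensures $r<q$), yielding the order $\varepsilon_{W,L}^\gamma + \varepsilon_{l,L}^{1/(2c)}$, while the finite element part is controlled by Assumption \ref{ASS:ConvRateFEMMeanSquareError}, yielding the order $\mathbb{E}(\hat{h}_L^{2\hat{\kappa}_a})^{1/2}$. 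Together these produce the three level-$L$ terms of the claimed bound.

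For the statistical term, writing $Y_\ell := \hat{u}_{\varepsilon_{W,\ell},\varepsilon_{l,\ell},\ell} - \hat{u}_{\varepsilon_{W,\ell-1},\varepsilon_{l,\ell-1},\ell-1}$ and using the telescopic identity \eqref{EQ:MLMCTelescopicSum}, I would express the statistical error as the sum $\sum_{\ell=0}^L (\mathbb{E}(Y_\ell) - E_{M_\ell}(Y_\ell))$ of centered $V$-valued random variables which are independent across levels (the samples on distinct levels are drawn independently). Lemma \ref{LE:IndepRVsExpectations} then annihilates all cross terms and produces the Pythagorean identity
$$\Big\|\sum_{\ell=0}^L(\mathbb{E}(Y_\ell) - E_{M_\ell}(Y_\ell))\Big\|_{L^2(\Omega;V)}^2 = \sum_{\ell=0}^L \|\mathbb{E}(Y_\ell) - E_{M_\ell}(Y_\ell)\|_{L^2(\Omega;V)}^2,$$
after which Lemma \ref{LE:SLMCstandardEst} bounds each summand by $\|Y_\ell\|_{L^2(\Omega;V)}/\sqrt{M_\ell}$; the elementary estimate $\sqrt{\sum_\ell a_\ell^2}\leq \sum_\ell a_\ell$ finally turns the square root of the sum of squares into a plain sum over levels.

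The remaining work is to bound each increment norm $\|Y_\ell\|_{L^2(\Omega;V)}$. For $\ell=0$ the increment equals $\hat{u}_{\varepsilon_{W,0},\varepsilon_{l,0},0}$, whose $L^2(\Omega;V)$-norm is bounded uniformly by the a-priori estimate of Theorem \ref{TH:ExistenceOfSolutionSubord} together with finite element stability, producing the $1/\sqrt{M_0}$ term. For $\ell\geq 1$ I would insert $u_{K,A}$ and estimate $\|Y_\ell\|_{L^2(\Omega;V)} \leq \|u_{K,A} - \hat{u}_{\varepsilon_{W,\ell},\varepsilon_{l,\ell},\ell}\|_{L^2(\Omega;V)} + \|u_{K,A} - \hat{u}_{\varepsilon_{W,\ell-1},\varepsilon_{l,\ell-1},\ell-1}\|_{L^2(\Omega;V)}$, treating each term exactly as in the bias step. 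Since the approximation parameters decrease in $\ell$, the coarser level $\ell-1$ dominates, so $\|Y_\ell\|_{L^2(\Omega;V)} \lesssim \varepsilon_{W,\ell-1}^\gamma + \varepsilon_{l,\ell-1}^{1/(2c)} + \mathbb{E}(\hat{h}_{\ell-1}^{2\hat{\kappa}_a})^{1/2}$, and re-indexing $\ell\mapsto\ell+1$ recovers the stated sum with $M_{\ell+1}$ in the denominator. The main obstacle is the bookkeeping in the statistical term: justifying the cross-term cancellation through cross-level independence combined with Lemma \ref{LE:IndepRVsExpectations}, and correctly attributing each increment $\|Y_\ell\|$ to the \emph{coarser} of its two levels, so that the monotonicity of the parameters yields precisely the index shift appearing in the final estimate.
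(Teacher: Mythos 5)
Your proposal is correct and follows essentially the same route as the paper's proof: the same bias/statistical splitting, the same use of Theorem \ref{TH:ErrorBoundE2} and Assumption \ref{ASS:ConvRateFEMMeanSquareError} after inserting the intermediate solution $u_{K,A}^{(\varepsilon_{W,\ell},\varepsilon_{l,\ell})}$ and $u_{K,A}$, the same treatment of the $\ell=0$ increment via Theorem \ref{TH:ExistenceOfSolutionSubord}, and the same monotonicity-based index shift. The only cosmetic difference is in the statistical term, where you pass through the cross-level orthogonality (Lemma \ref{LE:IndepRVsExpectations}) before discarding the gain via $\sqrt{\sum_\ell a_\ell^2}\leq\sum_\ell a_\ell$, whereas the paper applies the triangle inequality across levels directly; both land on the identical sum of single-level Monte Carlo errors handled by Lemma \ref{LE:SLMCstandardEst}.
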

 
 \begin{proof}
 We estimate
 \begin{align*}
 \|\mathbb{E}(u_{K,A}) - E^L(\hat{u}_{\varepsilon_{W,L},\varepsilon_{l,L},L})\|_{L^2(\Omega;V)}&\leq \|\mathbb{E}(u_{K,A}) - \mathbb{E}(\hat{u}_{\varepsilon_{W,L},\varepsilon_{l,L},L}) \|_{L^2(\Omega;V)} \\
 &+ \|\mathbb{E}(\hat{u}_{\varepsilon_{W,L},\varepsilon_{l,L},L}) - E^L(\hat{u}_{\varepsilon_{W,L},\varepsilon_{l,L},L})\|_{L^2(\Omega;V)}\\
 &=:I_1+I_2.
 \end{align*}
 We use the triangular inequality, Theorem \ref{TH:ErrorBoundE2} and Assumption \ref{ASS:ConvRateFEMMeanSquareError} to obtain
 \begin{align*}
 I_1&\leq \mathbb{E}(\|u_{K,A} - \hat{u}_{\varepsilon_{W,L},\varepsilon_{l,L},L}\|_V)
 \leq C_{reg}C(\overline{a}_-,\mathcal{D})(\varepsilon_{W,L}^\gamma + \varepsilon_{l,L}^\frac{1}{2c}) + \hat{C}_{u}\mathbb{E}(\hat{h}_L^{2\hat{\kappa}_a})^\frac{1}{2}.
 \end{align*}
 For the second term we use the definition of the MLMC estimator $E^L$ and Lemma \ref{LE:SLMCstandardEst} to obtain
 \begin{align*}
 I_2&\leq 
 \sum_{\ell=0}^L \Big(\frac{Var(\hat{u}_{\varepsilon_{W,\ell},\varepsilon_{l,\ell},\ell}-\hat{u}_{\varepsilon_{W,\ell-1},\varepsilon_{l,\ell-1},\ell-1})}{M_\ell}\Big) ^{1/2}\\
 &\leq\sum_{\ell=0}^L \frac{1}{\sqrt{M_\ell}}\Big( \|\hat{u}_{\varepsilon_{W,\ell},\varepsilon_{l,\ell},\ell} - u_{K,A}\|_{L^2(\Omega;V)} + \|u_{K,A}- \hat{u}_{\varepsilon_{W,\ell-1},\varepsilon_{l,\ell-1},\ell-1}\|_{L^2(\Omega;V)}\Big).
 \end{align*}
 Similar as for the first summand $I_1$ we apply Theorem \ref{TH:ErrorBoundE2} and Assumption \ref{ASS:ConvRateFEMMeanSquareError}  to get
 \begin{align*}
 \|\hat{u}_{\varepsilon_{W,\ell},\varepsilon_{l,\ell},\ell} - u_{K,A}\|_{L^2(\Omega;V)}\leq C_{reg}C(\overline{a}_-,\mathcal{D})(\varepsilon_{W,\ell}^\gamma + \varepsilon_{l,\ell}^\frac{1}{2c}) + \hat{C}_{u}\mathbb{E}(\hat{h}_\ell^{2\hat{\kappa}_a})^\frac{1}{2},
 \end{align*}
 for $\ell=0,\dots,L$ and for $\ell=-1$ it follows from Theorem \ref{TH:ExistenceOfSolutionSubord} that
 \begin{align*}
 \|u_{K,A}\|_{L^2(\Omega;V)}\leq C(\overline{a}_-,\mathcal{D})(\|f\|_{L^q(\Omega;H)} + \|g\|_{L^q(\Omega;L^2(\Gamma_2))}),
 \end{align*}
 since $q>2$. Finally, we calculate
 \begin{align*}
  I_1+I_2&\leq C_{reg}C(\overline{a}_-,\mathcal{D})\Big( \varepsilon_{W,L}^\gamma + \varepsilon_{l,L}^\frac{1}{2c} + \frac{\varepsilon_{W,0}^\gamma + \varepsilon_{l,0}^\frac{1}{2c}}{\sqrt{M_0}} + \sum_{\ell=1}^L \frac{1}{\sqrt{M_\ell}}( \varepsilon_{W,\ell}^\gamma + \varepsilon_{l,\ell}^\frac{1}{2c} + \varepsilon_{W,\ell-1}^\gamma + \varepsilon_{l,\ell-1}^\frac{1}{2c}) \Big)\\
  &+ \hat{C}_{u} \Big( \mathbb{E}(\hat{h}_L^{2\hat{\kappa}_a})^\frac{1}{2} + \frac{\mathbb{E}(\hat{h}_0^{2\hat{\kappa}_a})^{1/2}}{\sqrt{M_0}} +  \sum_{\ell=1}^L \frac{1}{\sqrt{M_\ell}} (\mathbb{E}(\hat{h}_\ell^{2\hat{\kappa}_a})^\frac{1}{2} +\mathbb{E}(\hat{h}_{\ell-1}^{2\hat{\kappa}_a})^\frac{1}{2})\Big)\\
  &+\frac{1}{\sqrt{M_0}} C(\overline{a}_-,\mathcal{D})(\|f\|_{L^q(\Omega;H)} + \|g\|_{L^q(\Omega;L^2(\Gamma_2))})\\
  &\leq C\Big(\varepsilon_{W,L}^\gamma + \varepsilon_{l,L}^\frac{1}{2c} + \mathbb{E}(\hat{h}_L^{2\hat{\kappa}_a})^{1/2} + \frac{1}{\sqrt{M_0}} +\sum_{\ell=0}^{L-1} \frac{\varepsilon_{W,\ell}^\gamma + \varepsilon_{l,\ell}^\frac{1}{2c} + \mathbb{E}(\hat{h}_\ell^{2\hat{\kappa}_a})^{1/2}}{\sqrt{M_{\ell+1}}}\Big),
 \end{align*}
 where we used the monotonicity of $(\varepsilon_{W,\ell})_{\ell=0}^L$, $(\varepsilon_{l,\ell})_{\ell=0}^L$ and $(\hat{h}_\ell)_{\ell=0}^L$.
 \end{proof}
 The error estimate of Theorem \ref{TH:MLMCEst} allows for an equilibriation of the error contributions resulting from the approximation of the diffusion coefficient and the approximation of the pathwise solution with the FE method which then leads to a higher computational efficiency compared to the singlelevel approach. This leads in general to the strategy that one takes only few of the accurate, but expensive samples for large $\ell\in\{0,\dots,L\}$ and one generates more on the cheap, but less accurate samples on the lower levels, which can be seen in the following corollary (see also \cite[Section 5]{AStudyOfElliptic}, \cite{MultilevelMonteCarloPathSimulation} and \cite{GilesMLMCMethods}).

 \begin{corollary}\label{COR:ParameterChoiceMLMC}
 Let the assumptions of Theorem \ref{TH:MLMCEst} hold. For $L\in\mathbb{N}$ and given (stochastic) refinement parameters $\hat{h}_0>\dots>\hat{h}_L>0$ choose $\varepsilon_{W,\ell}>0$ and $\varepsilon_{l,\ell}>0$ such that 
\begin{align}\label{EQ:RuleApprParams}
\varepsilon_{W,\ell} \simeq \mathbb{E}(\hat{h}_\ell^{2\hat{\kappa}_a})^{1/(2\gamma)} \text{ and } \varepsilon_{l,\ell} \simeq \mathbb{E}(\hat{h}_\ell^{2\hat{\kappa}_a})^{c}.
\end{align} 
 and sample numbers $M_\ell\in\mathbb{N}$ according to
 \begin{align} \label{EQ:RuleSampleNumbers}
 M_0\simeq \mathbb{E}(\hat{h}_L^{2\hat{\kappa}	_a})^{-1} \text{ and } M_\ell\simeq \mathbb{E}(\hat{h}_L^{2\hat{\kappa}_a})^{-1}\mathbb{E}(\hat{h}_{\ell-1}^{2\hat{\kappa}_a})(\ell+1)^{2(1+\xi)}\text{ for }\ell=1,\dots,L,
 \end{align}
 for some positive parameter $\xi>0$. Then, it holds
 \begin{align*}
 \|\mathbb{E}(u_{K,A}) - E^L(\hat{u}_{\varepsilon_{W,L},\varepsilon_{l,L},L})\|_{L^2(\Omega;V)}=\mathcal{O}(\mathbb{E}(\hat{h}_L^{2\hat{\kappa}_a})^{1/2}).
 \end{align*}
 \end{corollary}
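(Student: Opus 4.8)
The plan is to substitute the prescribed parameter choices directly into the a-priori bound of Theorem~\ref{TH:MLMCEst} and verify that every contribution collapses to a constant multiple of $\mathbb{E}(\hat{h}_L^{2\hat{\kappa}_a})^{1/2}$. First I would note that the rule \eqref{EQ:RuleApprParams} is engineered precisely so that the two diffusion-coefficient approximation errors match the Finite Element error on each level: raising both prescriptions to the relevant power gives $\varepsilon_{W,\ell}^\gamma \simeq \mathbb{E}(\hat{h}_\ell^{2\hat{\kappa}_a})^{1/2}$ and $\varepsilon_{l,\ell}^{1/(2c)} \simeq \mathbb{E}(\hat{h}_\ell^{2\hat{\kappa}_a})^{1/2}$. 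Consequently each numerator $\varepsilon_{W,\ell}^\gamma + \varepsilon_{l,\ell}^{1/(2c)} + \mathbb{E}(\hat{h}_\ell^{2\hat{\kappa}_a})^{1/2}$ in Theorem~\ref{TH:MLMCEst} is comparable to $\mathbb{E}(\hat{h}_\ell^{2\hat{\kappa}_a})^{1/2}$ up to a fixed factor, and the leading level-$L$ terms likewise reduce to $\mathbb{E}(\hat{h}_L^{2\hat{\kappa}_a})^{1/2}$.

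Next I would insert the sample numbers from \eqref{EQ:RuleSampleNumbers}. The choice $M_0\simeq \mathbb{E}(\hat{h}_L^{2\hat{\kappa}_a})^{-1}$ immediately yields $1/\sqrt{M_0}\simeq \mathbb{E}(\hat{h}_L^{2\hat{\kappa}_a})^{1/2}$. For the summands with $\ell=0,\dots,L-1$, the definition of $M_{\ell+1}$ gives
\begin{align*}
\sqrt{M_{\ell+1}}\simeq \mathbb{E}(\hat{h}_L^{2\hat{\kappa}_a})^{-1/2}\,\mathbb{E}(\hat{h}_{\ell}^{2\hat{\kappa}_a})^{1/2}\,(\ell+2)^{1+\xi},
\end{align*}
so that after cancelling the common factor $\mathbb{E}(\hat{h}_{\ell}^{2\hat{\kappa}_a})^{1/2}$ between numerator and denominator, each summand becomes comparable to $\mathbb{E}(\hat{h}_L^{2\hat{\kappa}_a})^{1/2}(\ell+2)^{-(1+\xi)}$.

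The decisive step is then the bound on the resulting series. Summing over $\ell$ produces the factor $\sum_{\ell=0}^{L-1}(\ell+2)^{-(1+\xi)}$, which is a partial sum of a convergent $p$-series with exponent $1+\xi>1$ and is therefore dominated by $\sum_{k=2}^{\infty}k^{-(1+\xi)}<\infty$, a constant independent of $L$. This is where the strict positivity of $\xi$ is essential: it is exactly what renders the (nearly geometric) growth of the sample numbers summable and decouples the final bound from the number of levels $L$. Collecting the three contributions — the leading level-$L$ terms, the $1/\sqrt{M_0}$ term, and the bounded tail sum — every piece is a constant multiple of $\mathbb{E}(\hat{h}_L^{2\hat{\kappa}_a})^{1/2}$, giving the claimed $\mathcal{O}(\mathbb{E}(\hat{h}_L^{2\hat{\kappa}_a})^{1/2})$ rate. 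The only remaining effort is bookkeeping of the implied constants hidden in the $\simeq$ relations, all of which can be absorbed into a single $L$-independent constant.
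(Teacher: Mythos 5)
Your proposal is correct and follows essentially the same route as the paper's proof: substitute the rules \eqref{EQ:RuleApprParams} and \eqref{EQ:RuleSampleNumbers} into the bound of Theorem \ref{TH:MLMCEst}, observe that each numerator collapses to a multiple of $\mathbb{E}(\hat{h}_\ell^{2\hat{\kappa}_a})^{1/2}$, and bound the remaining sum by the convergent series $\sum_{\ell}(\ell+2)^{-(1+\xi)}$ (the paper writes this constant as $\zeta(1+\xi)$). No gaps.
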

 
 \begin{proof}
 We use Theorem \ref{TH:MLMCEst} together with Equation \eqref{EQ:RuleApprParams} and Equation \eqref{EQ:RuleSampleNumbers} to obtain
 \begin{align*}
 \|\mathbb{E}(u_{K,A}) &- E^L(\hat{u}_{\varepsilon_{W,L},\varepsilon_{l,L},L})\|_{L^2(\Omega;V)}\\ &\leq C \Big(\varepsilon_{W,L}^\gamma + \varepsilon_{l,L}^\frac{1}{2c} + \mathbb{E}(\hat{h}_L^{2\hat{\kappa}_a})^{1/2} + \frac{1}{\sqrt{M_0}} +\sum_{\ell=0}^{L-1} \frac{\varepsilon_{W,\ell}^\gamma + \varepsilon_{l,\ell}^\frac{1}{2c} + \mathbb{E}(\hat{h}_\ell^{2\hat{\kappa}_a})^{1/2}}{\sqrt{M_{\ell+1}}}\Big)\\
 &\leq C\mathbb{E}(\hat{h}_L^{2\hat{\kappa}_a})^{1/2} \Big( 4 + \sum_{\ell=1}^L \frac{1}{(\ell+1)^{1+\xi}}  \Big)\leq C(4+ \zeta(1+\xi))\mathbb{E}(\hat{h}_L^{2\hat{\kappa}_a})^{1/2},
 \end{align*}
 where $\zeta(\cdot)$ denotes the Riemann zeta function.
 \end{proof}
	

\section{Multilevel Monte Carlo with Control Variates}\label{sec:MLMCCV}
	
The jump-discontinuities in the coefficient $a_{K,A}$ of the elliptic problem \eqref{EQ:EllProblemCutUpperCut} - \eqref{EQ:DiffCoeffDefiCutUpperCut} have a negative impact on the FE convergence due to the low regularity of the solution (see Section \ref{sec:approx_solution} and \cite{SGRFPDE}). In Subsection \ref{subsec:SampleAdaptedFE} we presented one possible approach to enhance the FE convergence for discontinuous diffusion coefficients: the sample-adapted FE approach with triangulations adjusted to the discontinuities. However, this approach may be computationally not feasible anymore if one has many jump interfaces. For instance, using subordinators with high jump activity (e.g. Gamma subordinators) may result in a very high number of discontinuities making the construction of sample-adapted triangulations extremely expensive. Besides the usage of adapted triangulations, \textit{variance reduction} techniques can also be used to improve the computational efficiency of the MLMC estimation of the mean of the PDE solution, as we see in this section. We start with an introduction to a specific variance reduction technique, the \textit{Control Variates} (CV), and show subsequently how we use a Control Variate in our setting (cf. \cite{NobileTeseiMLCV}).

\subsection{Control variates as a variance reduction technique}
Assume $Y$ is a real-valued, square integrable random variable and $(Y_i,~i\in \mathbb{N})$ is a sequence of i.i.d. random variables which follow the same distribution as $Y$. For a fixed number of samples $M\in\mathbb{N}$, the SLMC estimator for the estimation of the expectation $\mathbb{E}(Y)$ is given by $
E_M(Y)=1/M\sum_{i=1} ^M Y_i$
(see Section \ref{sec:MLMC}) and we have the following representation for the statistical error (see Lemma \ref{LE:SLMCstandardEst}):
\begin{align}\label{EQ:StatisticalErrorSLMC}
\|\mathbb{E}(Y) - E_M(Y)\|_{L^2(\Omega;\mathbb{R})} = \sqrt{\frac{Var(Y)}{M}}.
\end{align}
The use of Control Variates aims to reduce the statistical error of a MC estimation by reducing the variance on the right hand side of \eqref{EQ:StatisticalErrorSLMC}. Assume we are given another real valued, square integrable random variable $X$ with known expectation $\mathbb{E}(X)$ and a corresponding sequence of $i.i.d.$ random variables $(X_i,~i\in\mathbb{N})$ following the same distribution as $X$. For a given number of samples $M\in \mathbb{N}$, the control variate estimator is then defined by
\begin{align*}
E_M^{CV} (Y) = \frac{1}{M}\sum_{i=1}^M Y_i-(X_i-\mathbb{E}(X)),
\end{align*}
(see, for example, \cite[Section 4.1]{GlassermanMCMethods}). The estimator $E_M^{CV}(Y)$ is unbiased for the estimation of $\mathbb{E}(Y)$ and it can be shown that the variance of the estimator $E_M^{CV}(Y)$, i.e. the statistical error, can be reduced, if the random variables $X$ and $Y$ are correlated (see \cite[Section 4.1.1]{GlassermanMCMethods}). \\

In \cite{NobileTeseiMLCV}, the authors presented a MLMC-CV combination for the estimation of the mean of the solution to the problem \eqref{EQ:EllProblem} - \eqref{EQ:EllProblemBCN}, where the diffusion coefficient $a$ is modeled as a lognormal GRF. They use a smoothed version of the GRF and the pathwise solution to the corresponding PDE problem to construct a highly-correlated Control Variate. The considered GRFs have at least continuous paths leading to continuous diffusion coefficients. In the following, we show how we use a similar approach for our discontinuous diffusion coefficients to enhance the efficiency of the MLMC estimator for the case of subordinators with high jump activity.

\subsection{Smoothing the diffusion coefficient}
In this section we construct the Control Variate which is used to enhance the MLMC estimation of the mean of the solution to \eqref{EQ:EllProblemCutUpperCut} - \eqref{EQ:DiffCoeffDefiCutUpperCut} for subordinators with high jump activity. Our approach is motivated by \cite{NobileTeseiMLCV}.\\

\noindent For a positive smoothing parameter $\nu_s>0$ we consider the Gaussian kernel on $\mathbb{R}^2$:
\begin{align*}
\phi_{\nu_s}(x,y) = e^{-\frac{x^2 + y^2}{2\nu_s^2}}(2\pi \nu_s ^2)^{-1}\text{, for }(x,y)\in\mathbb{R}^2.
\end{align*}
Further, we identify the jump diffusion coefficient $a_{K,A}$ from Equation \eqref{EQ:DiffCoeffDefiCutUpperCut} with its extended version on the domain $\mathbb{R}^2$, where we set $a_{K,A} (x,y)= 0$ for $(x,y)\in\mathbb{R}^2\setminus \mathcal{D}$, and define the smoothed version $a_{K,A}^{(\nu_s)}$ by convolution with the Gaussian kernel:

\begin{align*}
a_{K,A}^{(\nu_s)} (x,y) &= \int_{\mathbb{R}^2} \phi_{\nu_s}(x',y')a_{K,A}(x-x',y-y')d(x',y')\\
 &= \int_{\mathbb{R}^2} \phi_{\nu_s}(x-x',y-y')a_{K,A}(x',y')d(x',y'),\text{ for }(x,y)\in \mathcal{D}.
\end{align*}
Obviously, Theorem \ref{TH:ExistenceOfSolutionSubord} applies also to the smoothed diffusion coefficient $a_{K,A}^{(\nu_s)}$ which guarantees the existence of a solution $u_{K,A}^{(\nu_s)}\in L^r(\Omega;V)$, for $r\in[1,q)$ with $f\in L^q(\Omega;H)$ and $g\in L^q(\Omega;L^2(\Gamma_2))$, and yields the bound
\begin{align}\label{EQ:SolBoundSmoothProblem}
\|u_{K,A}^{(\nu_s)}\|_{L^r(\Omega;V)}\leq C(\overline{a}_-,\mathcal{D},\nu_s)(\|f\|_{L^q(\Omega;H)} + \|g\|_{L^q(\Omega;L^2(\Gamma_2))}).
\end{align}

\noindent If the smoothing parameter $\nu_s$ is small, the solution corresponding to the smoothed coefficient $a_{K,A}^{(\nu_s)}$ is highly correlated with the solution to the PDE with (unsmoothed) diffusion coefficient $a_{K,A}$. Therefore, the smoothed solution is a reasonable choice as a Control Variate in the MLMC estimator being both: highly correlated with the solution to the rough problem and easy to approximate using the FE method due to the high regularity compared to the rough problem (see also \cite{NobileTeseiMLCV} and \cite[Sections 8 and 9]{EllipticDifferentialEquations}). Figure \ref{Fig:GKSmoothingSamples} shows a sample of the diffusion coefficient and smoothed versions using a Gaussian kernel with different smoothness parameters.

\begin{figure}[ht]
	\centering
	\subfigure{\includegraphics[scale=0.18]{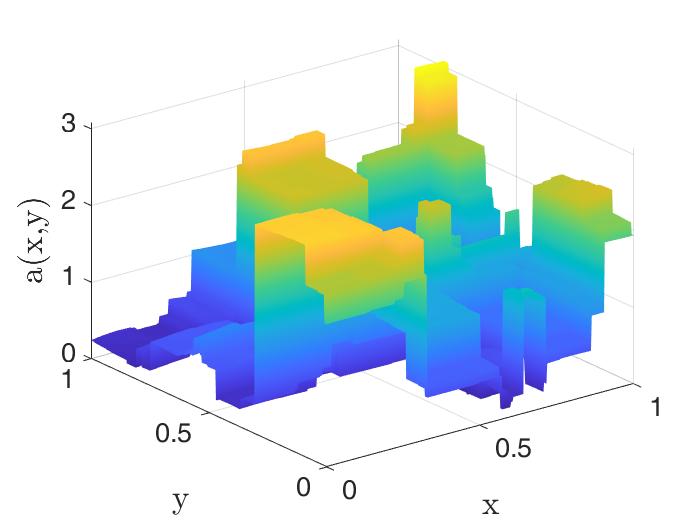}}
	\subfigure{\includegraphics[scale=0.18]{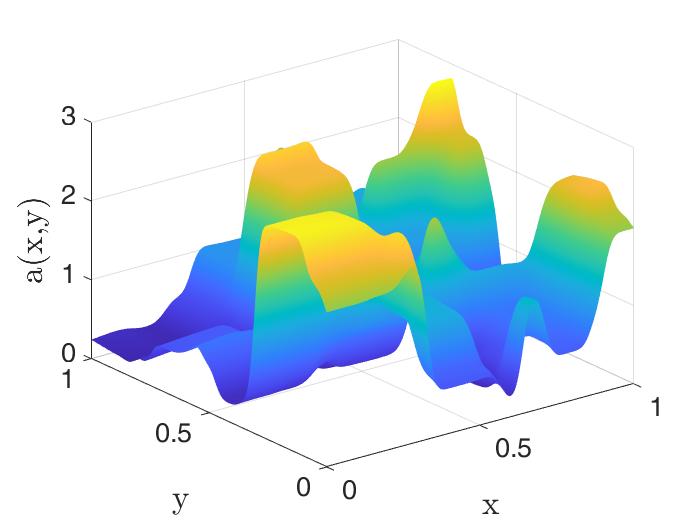}}
	\subfigure{\includegraphics[scale=0.18]{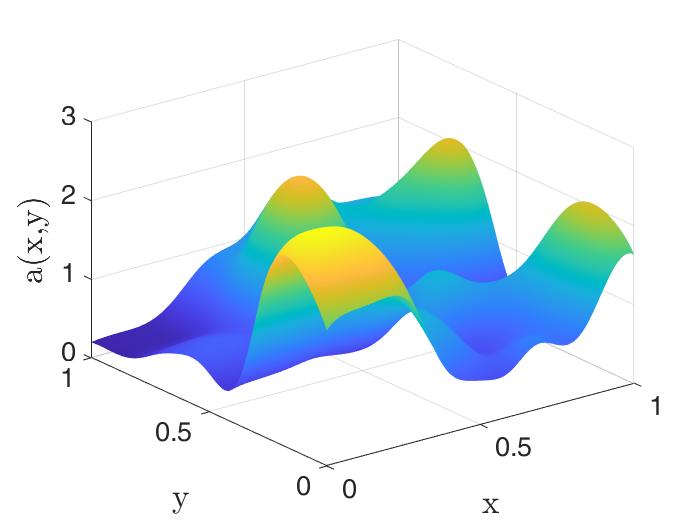}}
	\caption{Sample of the diffusion coefficient $a_{K,A}$ using a Gamma-subordinated Matérn GRF (left), smoothed versions of the coefficient using Gaussian kernel smoothing with smoothness parameter $\nu_s=0.02$ (middle) and $\nu_s = 0.06$ (right).}\label{Fig:GKSmoothingSamples}
\end{figure}

\subsection{MLMC-CV estimator}	
Next, we define the MLMC-CV estimator following \cite{NobileTeseiMLCV}. We fix a positive smoothing parameter $\nu_s>0$. The smoothness parameter $\nu_s$ controls the variance reduction achieved in the MLMC-CV estimator and its specific choice is problem dependent (see Subsection \ref{subsec:NumExMLCV} and \cite{NobileTeseiMLCV}).  We assume $L\in\mathbb{N}$ and consider finite-dimensional subspaces $(\hat{V}_\ell,~\ell=0,\dots,L)$ of $V$ with refinement sizes $\hat{h}_0>\dots>\hat{h}_L>0$ and approximation parameters $\varepsilon_{W,0}>\dots>\varepsilon_{W,L}$ for the GRFs and $\varepsilon_{l,0}>\dots>\varepsilon_{l,L}$ for the Lévy subordinators (see Subsection \ref{subsec:SampleAdaptedFE}). To unify notation, we focus here again on the sample-adapted discretization with corresponding approximation $\hat{u}_{K,A,\ell}^{(\varepsilon_W,\varepsilon_l)}$ with averaged refinement parameter $\mathbb{E}(\hat{h}_\ell^{2\hat{\kappa}_a})^{1/2}$ and convergence rate $\hat{\kappa}_a$ for the theoretical analysis of the estimator (see Assumption \ref{ASS:ConvRateFEMMeanSquareError} and Section \ref{sec:MLMC}) and point out again that similar results hold for the non-adapted FE approach. Since we again fix the parameters $K$ and $A$ in this analysis, we omit them in the following and use the notation $\hat{u}_{\varepsilon_{W,\ell},\varepsilon_{l,\ell},\ell}:=\hat{u}_{K,A,\ell}^{(\varepsilon_{W,\ell},\varepsilon_{l,\ell})}$ for the FEM approximation on $\hat{V}_\ell$, for $\ell=0,\dots,L$. Similar, we denote by $\hat{u}_{\varepsilon_{W,\ell},\varepsilon_{l,\ell},\ell}^{(\nu_s)} := \hat{u}_{K,A,\ell}^{(\nu_s,\varepsilon_{W,\ell},\varepsilon_{l,\ell})}$, for $\ell=0,\dots,L$, the (pathwise) solution to problem \eqref{EQ:EllProblem} - \eqref{EQ:EllProblemBCN} with diffusion coefficient $a_{K,A}^{(\nu_s,\varepsilon_{W,\ell},\varepsilon_{l,\ell})}$ as the smoothed version of the coefficient $a_{K,A}^{(\varepsilon_{W,\ell},\varepsilon_{l,\ell})}$ constructed in \eqref{EQ:DiffCoeffApprDef}. We define the CV basis experiment by
 \begin{align}\label{EQ:BasisExCV}
 \hat{u}_{\varepsilon_{W,\ell},\varepsilon_{l,\ell},\ell}^{CV} :=  \hat{u}_{\varepsilon_{W,\ell},\varepsilon_{l,\ell},\ell} - ( \hat{u}_{\varepsilon_{W,\ell},\varepsilon_{l,\ell},\ell}^{(\nu_s)} - \mathbb{E}(u_{K,A}^{(\nu_s)})),\text{ for }\ell = 0,\dots,L,
 \end{align}
 and we set $\hat{u}_{\varepsilon_{W,-1},\varepsilon_{l,-1},-1}^{CV} = 0$.
 For the moment, we assume that the expectation $\mathbb{E}(u_{K,A}^{(\nu_s)})$ of the solution to the smoothed problem is known. Later, we elaborate more on appropriate approximations of this expectation (see Remark \ref{rem:ApprCVMean}). The MLMC-CV estimator  for the estimation of the expectation of the solution is then defined by
 \begin{align*}
 E^{CV,L}(\hat{u}_{\varepsilon_{W,L},\varepsilon_{l,L},L}^{CV})&:= \sum_{\ell=0}^L E_{M_\ell} (\hat{u}_{\varepsilon_{W,\ell},\varepsilon_{l,\ell},\ell}^{CV} - \hat{u}_{\varepsilon_{W,\ell-1},\varepsilon_{l,\ell-1},\ell-1}^{CV}),
 \end{align*}
 with sample sizes $M_\ell\in\mathbb{N}$ for $\ell = 0,\dots,L$. \\
 
\subsection{Convergence of the MLMC-CV estimator} 
 
 For the theoretical investigation of the MLMC-CV estimator we extend Assumption \ref{ASS:ConvRateFEMMeanSquareError} by the following assumption on the mean-square convergence rate of the pathwise FE method for the smoothed problem.
 
	\begin{assumption}\label{ASS:ConvRateFEMMeanSquareErrorSmoothedProblem}
	There exist deterministic constants $\hat{C}_{u,s}, C_{u,s}$ such that for any $\varepsilon_W,\varepsilon_l>0$ and any $\ell\in\mathbb{N}_0$, the FE approximation errors of $\hat{u}_{K,A,\ell}^{(\nu_s,\varepsilon_W,\varepsilon_l)}\approx u_{K,A}^{(\nu_s,\varepsilon_W,\varepsilon_l)}$ in the subspaces $\hat{V}_\ell$, respectively $u_{K,A,\ell}^{(\nu_s,\varepsilon_W,\varepsilon_l)}\approx u_{K,A}^{(\nu_s,\varepsilon_W,\varepsilon_l)}$ in $V_\ell$, are bounded by
	\begin{align*}
	\|u_{K,A}^{(\nu_s,\varepsilon_W,\varepsilon_l)} - \hat{u}_{K,A,\ell}^{(\nu_s,\varepsilon_W,\varepsilon_l)}\|_{L^2(\Omega;V)}&\leq \hat{C}_{u,s} \mathbb{E}(\hat{h}_\ell^{2})^{1/2} , \text{ respectively,}\\
	\|u_{K,A}^{(\nu_s,\varepsilon_W,\varepsilon_l)} - u_{K,A,\ell}^{(\nu_s,\varepsilon_W,\varepsilon_l)}\|_{L^2(\Omega;V)}&\leq C_{u,s}h_\ell,
	\end{align*}
	where the constants $\hat{C}_{u,s},C_{u,s}$ may depend on $a,f,g,K,A$ but are independent of $\hat{h}_\ell$ and $h_\ell$. Further, we assume that Assumption \ref{ASS:IntegrabilityOfSolGradient} also holds for the solution $u_{K,A}^{(\nu_s)}$ corresponding to the elliptic PDE with the smoothed coefficient $a_{K,A}^{(\nu_s)}$.
\end{assumption}	
	 Note that this assumption is natural since we expect (pathwise) full order convergence of the linear FE method for the smoothed elliptic PDE (see also \cite{MLMCForStochEllMultiscalePDEs}, \cite{MLMCFEMEllPDE}, \cite{AStudyOfElliptic}, \cite{NobileTeseiMLCV} and \cite[Section 8.5]{EllipticDifferentialEquations} together with \cite[Section 6.3]{PartialDifferentialEquations}). The assumption on the integrability of the gradient of the solution corresponding to the smoothed problem is also natural under Assumption \ref{ASS:IntegrabilityOfSolGradient}, since the solution has a higher regularity than the solution $u_{K,A}$ to the elliptic problem with the jump diffusion coefficient $a_{K,A}$. 	 The following lemma states that the approximation error of the smoothed coefficient can be bounded by the approximation error of the rough diffusion coefficient.
	 
\begin{lemma}\label{LE:ApprDiffCoeffSmoothed}
For $t> 1$ and fixed parameters $\nu_s,K,A>0$ and any $\varepsilon_W,\varepsilon_l>0$ it holds for $\mathbb{P}$-almost every $\omega\in \Omega$ 
\begin{align*}
\|a_{K,A}^{(\nu_s)} - a_{K,A}^{(\nu_s,\varepsilon_W,\varepsilon_l)}\|_{L^t(\mathcal{D})}^t \leq C(t,\nu_s,\mathcal{D})\|a_{K,A} - a_{K,A}^{(\varepsilon_W,\varepsilon_l)}\|_{L^t(\mathcal{D})}^t,
\end{align*}
with a constant $C=C(t,\nu_s,\mathcal{D})$ which depends only on the indicated parameters.
\end{lemma}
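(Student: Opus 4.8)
The plan is to exploit the \emph{linearity} of the smoothing operation. Both $a_{K,A}^{(\nu_s)}$ and $a_{K,A}^{(\nu_s,\varepsilon_W,\varepsilon_l)}$ are obtained by convolving the Gaussian kernel $\phi_{\nu_s}$ with the zero-extensions (to $\mathbb{R}^2$) of $a_{K,A}$ and $a_{K,A}^{(\varepsilon_W,\varepsilon_l)}$, respectively, so linearity of the convolution yields, for $\mathbb{P}$-almost every $\omega$ and every $(x,y)\in\mathcal{D}$, the identity $a_{K,A}^{(\nu_s)}(x,y) - a_{K,A}^{(\nu_s,\varepsilon_W,\varepsilon_l)}(x,y) = (\phi_{\nu_s} * g)(x,y)$, where $g := a_{K,A} - a_{K,A}^{(\varepsilon_W,\varepsilon_l)}$ denotes the difference extended by zero outside $\mathcal{D}$. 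Since both coefficients take values in $(0,A]$ by the definition of $\chi_A$, the function $g$ is bounded and supported in $\mathcal{D}$, hence $g\in L^t(\mathcal{D})$ for every $t$ and the convolution integral is well-defined.

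This reduces the claim to the convolution estimate $\|\phi_{\nu_s} * g\|_{L^t(\mathcal{D})}^t \le C(t,\nu_s,\mathcal{D})\,\|g\|_{L^t(\mathcal{D})}^t$. I would obtain it by bounding the convolution pointwise: for $(x,y)\in\mathcal{D}$, Hölder's inequality gives $|(\phi_{\nu_s} * g)(x,y)| \le \|\phi_{\nu_s}\|_{L^\infty(\mathbb{R}^2)}\,\|g\|_{L^1(\mathcal{D})} \le (2\pi\nu_s^2)^{-1}\,|\mathcal{D}|^{1-1/t}\,\|g\|_{L^t(\mathcal{D})}$, where the first factor uses the maximal value of the Gaussian and the second applies Hölder on the bounded domain $\mathcal{D}$. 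Raising this uniform bound to the power $t$ and integrating over $\mathcal{D}$ (which contributes one further factor $|\mathcal{D}|$) produces the stated constant $C(t,\nu_s,\mathcal{D}) = (2\pi\nu_s^2)^{-t}\,|\mathcal{D}|^{t}$.

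As an even cleaner alternative giving the sharper constant $1$ (independent of $\nu_s$), one can invoke Young's convolution inequality: since $\phi_{\nu_s}$ is a probability density one has $\|\phi_{\nu_s}\|_{L^1(\mathbb{R}^2)}=1$, whence $\|\phi_{\nu_s} * g\|_{L^t(\mathbb{R}^2)} \le \|\phi_{\nu_s}\|_{L^1(\mathbb{R}^2)}\,\|g\|_{L^t(\mathbb{R}^2)} = \|g\|_{L^t(\mathcal{D})}$, and restricting the left-hand norm from $\mathbb{R}^2$ to $\mathcal{D}\subset\mathbb{R}^2$ only decreases it. Equivalently, applying Jensen's inequality with respect to the probability measure $\phi_{\nu_s}(x-\cdot,y-\cdot)\,d(\cdot,\cdot)$ and then Fubini's theorem gives the same bound with constant $1$.

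The argument is essentially routine; the only point requiring care is the consistent use of the zero-extension. The coefficients $a_{K,A}$ and $a_{K,A}^{(\varepsilon_W,\varepsilon_l)}$ are extended by zero to all of $\mathbb{R}^2$ before convolving, so their difference $g$ is genuinely supported in $\mathcal{D}$, while the convolution integral runs over $\mathbb{R}^2$ and the norms in the statement are taken over $\mathcal{D}$. Keeping these domains distinct — and using that $\mathcal{D}$ has finite Lebesgue measure $|\mathcal{D}|<\infty$, which is precisely where the dependence of the constant on $\mathcal{D}$ enters — is the main (minor) obstacle; the integrability needed for Fubini is supplied by the boundedness of $g$.
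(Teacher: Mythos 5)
Your proof is correct and follows essentially the same route as the paper: both exploit linearity of the convolution to reduce the claim to a bound on $\|\phi_{\nu_s}*g\|_{L^t(\mathcal{D})}$ with $g := a_{K,A}-a_{K,A}^{(\varepsilon_W,\varepsilon_l)}$ extended by zero, and then apply a H\"older-type estimate using the integrability of the Gaussian kernel. The only difference is the choice of pairing: the paper applies H\"older with conjugate exponents $(t,t')$ inside the convolution integral, yielding the constant $|\mathcal{D}|\,\|\phi_{\nu_s}\|_{L^{t'}(\mathbb{R}^2)}^{t}$, whereas your primary argument uses the $(L^\infty,L^1)$ pairing followed by H\"older on the bounded domain, and your alternative via Young's inequality is in fact slightly cleaner, giving the $\nu_s$- and $\mathcal{D}$-independent constant $1$.
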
	 
	 
\begin{proof}
Let $t'>1$ such that $1/t+1/t' = 1$. We calculate using Hölder's inequality and the integrability of the Gaussian kernel $\phi_{\nu_s}$ 
\begin{align*}
&\|a_{K,A}^{(\nu_s)} - a_{K,A}^{(\nu_s,\varepsilon_W,\varepsilon_l)}\|_{L^t(\mathcal{D})}^t \\
&\leq \int_\mathcal{D} \Big(\int_{\mathbb{R}^2} |\phi_{\nu_s}(x',y')(a_{K,A}(x-x',y-y')-a_{K,A}^{(\varepsilon_W,\varepsilon_l)}(x-x',y-y'))|d(x',y')\Big)^t d(x,y)\\
&\leq \int_\mathcal{D}\Big( \int_{\mathbb{R}^2}|a_{K,A}(x-x',y-y') - a_{K,A}^{(\varepsilon_W,\varepsilon_l)}(x-x',y-y')|^t d(x',y')\Big) \\
&\phantom{\int_\mathcal{D}\Big( \int_{\mathbb{R}^2}|a_{K,A}(x-x',y-y') - a_{K,A}^{(\varepsilon_W,\varepsilon_l)}(x-x',y-y')|^t}\times \Big(\int_{\mathbb{R}^2} \phi_{\nu_s}(x',y')^{t'}d(x',y')\Big)^{t/t'}  d(x,y)\\
&\leq  C(t,\nu_s,\mathcal{D})\|a_{K,A}-a_{K,A}^{(\varepsilon_W,\varepsilon_l)}\|_{L^t(\mathcal{D})}^t.
\end{align*}
\end{proof}	 
	 
In order to proof the convergence of the MLMC-CV estimator we need the following error bound on the approximation of the solution of the smoothed problem (cf. Theorem \ref{TH:ErrorBoundE2}).	 
	 \begin{theorem}\label{TH:ErrorBoundE2SmoothProblem}
	Let $r\geq 2$ and $b,c\in[1,+\infty]$ be given such that it holds
	\begin{align*}
	rc\gamma\geq 2 \text{ and }2b\leq rc< \eta 
	\end{align*}
	with a fixed real number $\gamma\in(0,min(1,\beta/(2\alpha))$. Here, the parameters $\eta, \alpha$ and $\beta$ are determined by the GRFs $W_1$, $W_2$ and the L\'evy subordinators $l_1$, $l_2$ (see Assumption \ref{ASS:CutProblemEigenvalues}). \\
	Let $m,n\in[1,+\infty]$ be real numbers such that 
	\begin{align*}
	\frac{1}{m} + \frac{1}{c} = \frac{1}{n} + \frac{1}{b}=1,
	\end{align*}
	and let $k_{reg}\geq 2$ and $j_{reg}>0$ be the regularity specifiers given by Assumption~\ref{ASS:IntegrabilityOfSolGradient}.
	If it holds that
	\begin{align*}
	n<1+\frac{j_{reg}}{2} \text{ and } rm< k_{reg},
	\end{align*}
	then the approximated solution $u_{K,A}^{(\nu_s,\varepsilon_W,\varepsilon_l)}$ of the smoothed problem converges to the solution $u_{K,A}^{(\nu_s)}$ of the truncated smoothed problem for $\varepsilon_W,\varepsilon_l\rightarrow 0$ and it holds
	\begin{align*}
	\|u_{K,A}^{(\nu_s)}-u_{K,A}^{(\nu_s,\varepsilon_W,\varepsilon_l)}\|_{L^r(\Omega;V)}&\leq  C(\overline{a}_-,\mathcal{D},r)C_{reg}\|a_{K,A}^{(\nu_s,\varepsilon_W,\varepsilon_l)}-a_{K,A}^{(\nu_s)}\|_{L^{rc}(\Omega;L^{2b}(\mathcal{D}))}\\
	&\leq  C_{reg}C(\overline{a}_-,\mathcal{D},r,\nu_s)(\varepsilon_W^\gamma + \varepsilon_l^\frac{1}{rc}).
	\end{align*}
	\end{theorem}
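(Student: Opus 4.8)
The plan is to mirror the proof of Theorem~\ref{TH:ErrorBoundE2} almost verbatim, since the smoothed problem \eqref{EQ:EllProblem} -- \eqref{EQ:EllProblemBCN} with coefficients $a_{K,A}^{(\nu_s)}$ and $a_{K,A}^{(\nu_s,\varepsilon_W,\varepsilon_l)}$ has exactly the same variational structure as the rough problem; the only genuinely new ingredient is Lemma~\ref{LE:ApprDiffCoeffSmoothed}, which transfers the coefficient approximation error from the rough to the smoothed coefficient. Accordingly I would split the argument along the two displayed inequalities. The first is a pathwise energy estimate followed by a Hölder step in $\Omega$, identical in form to the one for the unsmoothed solution; the second chains Lemma~\ref{LE:ApprDiffCoeffSmoothed} together with the coefficient bound already established inside the proof of Theorem~\ref{TH:ErrorBoundE2}.

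For the first inequality I would subtract the two weak formulations solved by $u_{K,A}^{(\nu_s)}$ and $u_{K,A}^{(\nu_s,\varepsilon_W,\varepsilon_l)}$, which share the right-hand side $F_\omega$, to obtain $B_{a_{K,A}^{(\nu_s,\varepsilon_W,\varepsilon_l)}}(u_{K,A}^{(\nu_s)}-u_{K,A}^{(\nu_s,\varepsilon_W,\varepsilon_l)},v)=\int_{\mathcal{D}}(a_{K,A}^{(\nu_s,\varepsilon_W,\varepsilon_l)}-a_{K,A}^{(\nu_s)})\nabla u_{K,A}^{(\nu_s)}\cdot\nabla v\,d(x,y)$ for all $v\in V$. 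Testing with $v=u_{K,A}^{(\nu_s)}-u_{K,A}^{(\nu_s,\varepsilon_W,\varepsilon_l)}$, estimating the left side from below by coercivity, and applying the spatial Hölder inequality with exponents $2b$, $2n$, $2$ (admissible precisely because $1/n+1/b=1$) gives the pathwise bound $\|u_{K,A}^{(\nu_s)}-u_{K,A}^{(\nu_s,\varepsilon_W,\varepsilon_l)}\|_V\leq C(\overline{a}_-,\mathcal{D})\|a_{K,A}^{(\nu_s,\varepsilon_W,\varepsilon_l)}-a_{K,A}^{(\nu_s)}\|_{L^{2b}(\mathcal{D})}\|\nabla u_{K,A}^{(\nu_s)}\|_{L^{2n}(\mathcal{D})}$. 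Taking the $L^r(\Omega)$-norm and applying Hölder in $\Omega$ with exponents $rc$ and $rm$ (using $1/m+1/c=1$) then yields the first inequality, where $\|\nabla u_{K,A}^{(\nu_s)}\|_{L^{2n}(\mathcal{D})}$ is controlled in $L^{rm}(\Omega)$ by $C_{reg}$; the conditions $n<1+j_{reg}/2$ and $rm<k_{reg}$ are exactly what make these two norms finite, with the higher integrability of $\nabla u_{K,A}^{(\nu_s)}$ supplied by the extension of Assumption~\ref{ASS:IntegrabilityOfSolGradient} to the smoothed solution stated in Assumption~\ref{ASS:ConvRateFEMMeanSquareErrorSmoothedProblem}.

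For the second inequality I would invoke Lemma~\ref{LE:ApprDiffCoeffSmoothed} with $t=2b$ (legitimate since $2b\geq 2>1$), which gives pathwise $\|a_{K,A}^{(\nu_s)}-a_{K,A}^{(\nu_s,\varepsilon_W,\varepsilon_l)}\|_{L^{2b}(\mathcal{D})}\leq C(2b,\nu_s,\mathcal{D})^{1/(2b)}\|a_{K,A}-a_{K,A}^{(\varepsilon_W,\varepsilon_l)}\|_{L^{2b}(\mathcal{D})}$. Since this constant is deterministic, raising to the power $rc$, taking expectations and extracting the $rc$-th root transfers the estimate directly to the $L^{rc}(\Omega;L^{2b}(\mathcal{D}))$-norm. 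The rough coefficient bound $\|a_{K,A}-a_{K,A}^{(\varepsilon_W,\varepsilon_l)}\|_{L^{rc}(\Omega;L^{2b}(\mathcal{D}))}\leq C(\varepsilon_W^\gamma+\varepsilon_l^{1/(rc)})$ obtained in the proof of Theorem~\ref{TH:ErrorBoundE2} then closes the estimate, and the $\nu_s$-dependence of the final constant enters solely through the constant of Lemma~\ref{LE:ApprDiffCoeffSmoothed}.

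The step I expect to require the most care is the coercivity used in the first part. Because $a_{K,A}$ and $a_{K,A}^{(\varepsilon_W,\varepsilon_l)}$ are zero-extended outside $\mathcal{D}$ before convolution, the smoothed coefficients can drop below $\overline{a}_-$ in a layer near $\partial\mathcal{D}$, so one must verify that $\inf_{\mathcal{D}} a_{K,A}^{(\nu_s,\varepsilon_W,\varepsilon_l)}$ remains bounded below by a strictly positive constant; quantitatively this infimum is controlled by $\overline{a}_-\inf_{(x,y)\in\mathcal{D}}\int_{\mathcal{D}}\phi_{\nu_s}((x,y)-(x',y'))\,d(x',y')$, which is positive but in general degrades as $\nu_s$ grows. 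Hence the coercivity constant is, strictly speaking, $\nu_s$-dependent, and writing $C(\overline{a}_-,\mathcal{D},r)$ in the first inequality implicitly treats $\nu_s$ as part of the fixed geometric data of $\mathcal{D}$; this is the only genuinely delicate point. The remaining bookkeeping --- that the compatibility relations $rc\gamma\geq 2$, $2b\leq rc<\eta$ and the conjugacy relations among $b,c,m,n$ render every Hölder step admissible --- is identical to Theorem~\ref{TH:ErrorBoundE2} and introduces no new idea.
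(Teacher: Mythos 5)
Your proposal follows exactly the paper's route: the published proof is a one-line reference to the arguments of \cite[Theorem 5.9]{SGRFPDE} (i.e.\ Theorem~\ref{TH:ErrorBoundE2}) combined with Lemma~\ref{LE:ApprDiffCoeffSmoothed}, which is precisely the structure you reconstruct in detail --- pathwise energy estimate, spatial H\"older with exponents $2b,2n,2$, H\"older in $\Omega$ with $rc,rm$, then Lemma~\ref{LE:ApprDiffCoeffSmoothed} with $t=2b$ to fall back on the rough coefficient bound. Your remark that the zero-extension before convolution can push $a_{K,A}^{(\nu_s)}$ below $\overline{a}_-$ near $\partial\mathcal{D}$, so that the coercivity constant is in truth $\nu_s$-dependent, is a genuine subtlety the paper passes over silently; it does not affect the validity of the result, but it does mean the constant in the first displayed inequality should strictly also carry a $\nu_s$ (consistent with the $\nu_s$-dependence the authors do record in \eqref{EQ:SolBoundSmoothProblem} and in the second inequality).
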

	
	 \begin{proof}
	 This theorem follows by the same arguments used in \cite[Theorem 5.9]{SGRFPDE} together with Lemma \ref{LE:ApprDiffCoeffSmoothed}.
	 \end{proof}
	 We are now able to prove the following a-priori bound on the mean-square error of the MLMC-CV estimator, similar to Theorem \ref{TH:MLMCEst}.
 
 \begin{theorem}\label{TH:MLMCCVEst} 
 We set $r=2$ and assume $q>2$. Further, let $b,c\geq 1$ be given such that Theorem \ref{TH:ErrorBoundE2} (and Theorem \ref{TH:ErrorBoundE2SmoothProblem}) hold. For $L\in\mathbb{N}$, let $\hat{h}_\ell>0$, $M_\ell$, $\varepsilon_{W,\ell}>0$ and $\varepsilon_{l,\ell}>0$ be the level-dependent approximation parameters, for $\ell=0,...,L$, such that $\hat{h}_{\ell},~ \varepsilon_{W,\ell},$ and $\varepsilon_{l,\ell}$ decrease with respect to $\ell$. It holds
 \begin{align*}
 \|\mathbb{E}(u_{K,A}) - E^{CV,L}(\hat{u}_{\varepsilon_{W,L},\varepsilon_{l,L},L}^{CV})\|_{L^2(\Omega;V)}\leq C &\Big(\varepsilon_{W,L}^\gamma + \varepsilon_{l,L}^\frac{1}{2c} + \mathbb{E}(\hat{h}_L^{2\hat{\kappa}_a})^{1/2} + \frac{1}{\sqrt{M_0}} \\
 &+\sum_{\ell=0}^{L-1} \frac{\varepsilon_{W,\ell}^\gamma + \varepsilon_{l,\ell}^\frac{1}{2c} + \mathbb{E}(\hat{h}_\ell^{2\hat{\kappa}_a})^{1/2}}{\sqrt{M_{\ell+1}}}\Big),
 \end{align*}
 where $C>0$ is a constant which is independent of $L$ and the level-dependent approximation parameters. Note that the numbers $\gamma>0$ and $c\geq 1$ are determined by the GRFs resp. the subordinators (cf. Theorem \ref{TH:ErrorBoundE2} and Theorem \ref{TH:ErrorBoundE2SmoothProblem}).
 \end{theorem}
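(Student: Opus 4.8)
The plan is to follow the proof of Theorem~\ref{TH:MLMCEst} almost verbatim, isolating the two places where the control variate changes the argument. As there, I would split
\[
\|\mathbb{E}(u_{K,A}) - E^{CV,L}(\hat{u}_{\varepsilon_{W,L},\varepsilon_{l,L},L}^{CV})\|_{L^2(\Omega;V)} \leq I_1 + I_2,
\]
where $I_1 := \|\mathbb{E}(u_{K,A}) - \mathbb{E}(E^{CV,L}(\hat{u}_{\varepsilon_{W,L},\varepsilon_{l,L},L}^{CV}))\|_{L^2(\Omega;V)}$ is the bias and $I_2$ is the statistical error. For brevity write $\hat{u}_\ell := \hat{u}_{\varepsilon_{W,\ell},\varepsilon_{l,\ell},\ell}$ and $\hat{u}_\ell^{(\nu_s)} := \hat{u}_{\varepsilon_{W,\ell},\varepsilon_{l,\ell},\ell}^{(\nu_s)}$. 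By the definition~\eqref{EQ:BasisExCV} of the CV basis experiment the telescoping sum defining $E^{CV,L}$ collapses in expectation to $\mathbb{E}(\hat{u}_L^{CV}) = \mathbb{E}(\hat{u}_L) - \mathbb{E}(\hat{u}_L^{(\nu_s)}) + \mathbb{E}(u_{K,A}^{(\nu_s)})$.

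For the bias $I_1$ I would insert the constant $\mathbb{E}(u_{K,A}^{(\nu_s)})$ and regroup so that
\[
I_1 \leq \mathbb{E}(\|u_{K,A} - \hat{u}_L\|_V) + \mathbb{E}(\|u_{K,A}^{(\nu_s)} - \hat{u}_L^{(\nu_s)}\|_V).
\]
The first term is controlled exactly as in Theorem~\ref{TH:MLMCEst} by $C_{reg}C(\overline{a}_-,\mathcal{D})(\varepsilon_{W,L}^\gamma + \varepsilon_{l,L}^{1/(2c)}) + \hat{C}_{u}\mathbb{E}(\hat{h}_L^{2\hat{\kappa}_a})^{1/2}$, using Theorem~\ref{TH:ErrorBoundE2} and Assumption~\ref{ASS:ConvRateFEMMeanSquareError}. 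The second term is its analogue for the smoothed problem: Theorem~\ref{TH:ErrorBoundE2SmoothProblem} bounds the coefficient-approximation contribution and Assumption~\ref{ASS:ConvRateFEMMeanSquareErrorSmoothedProblem} bounds the FE contribution by $\hat{C}_{u,s}\mathbb{E}(\hat{h}_L^2)^{1/2}$. Because the smoothed problem enjoys the full FE rate $1\geq\hat{\kappa}_a$, one has $\mathbb{E}(\hat{h}_L^2)^{1/2}\leq \mathbb{E}(\hat{h}_L^{2\hat{\kappa}_a})^{1/2}$ for mesh sizes bounded by one, so the smoothed error is dominated by the rough one and $I_1$ reduces to the required terms $\varepsilon_{W,L}^\gamma + \varepsilon_{l,L}^{1/(2c)} + \mathbb{E}(\hat{h}_L^{2\hat{\kappa}_a})^{1/2}$.

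For the statistical error $I_2$ I would use the triangle inequality over levels together with the single-level estimate of Lemma~\ref{LE:SLMCstandardEst}, giving $I_2 \leq \sum_{\ell=0}^L (Var(\hat{u}_\ell^{CV} - \hat{u}_{\ell-1}^{CV})/M_\ell)^{1/2}$. The decisive step is the variance bound for each level increment. Since the constant $\mathbb{E}(u_{K,A}^{(\nu_s)})$ cancels in the difference $\hat{u}_\ell^{CV} - \hat{u}_{\ell-1}^{CV} = (\hat{u}_\ell - \hat{u}_\ell^{(\nu_s)}) - (\hat{u}_{\ell-1} - \hat{u}_{\ell-1}^{(\nu_s)})$, I would bound the standard deviation by the $L^2(\Omega;V)$-norm and insert the level-independent exact difference $u_{K,A} - u_{K,A}^{(\nu_s)}$ rather than the two exact solutions separately. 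Regrouping each level as $(\hat{u}_j - u_{K,A}) - (\hat{u}_j^{(\nu_s)} - u_{K,A}^{(\nu_s)})$ then yields
\[
\sqrt{Var(\hat{u}_\ell^{CV} - \hat{u}_{\ell-1}^{CV})} \leq \sum_{j\in\{\ell,\ell-1\}}\big(\|\hat{u}_j - u_{K,A}\|_{L^2(\Omega;V)} + \|\hat{u}_j^{(\nu_s)} - u_{K,A}^{(\nu_s)}\|_{L^2(\Omega;V)}\big),
\]
and each summand is controlled by Theorems~\ref{TH:ErrorBoundE2} and~\ref{TH:ErrorBoundE2SmoothProblem} together with Assumptions~\ref{ASS:ConvRateFEMMeanSquareError} and~\ref{ASS:ConvRateFEMMeanSquareErrorSmoothedProblem} exactly as in the bias step, producing the per-level quantity $\varepsilon_{W,\ell}^\gamma + \varepsilon_{l,\ell}^{1/(2c)} + \mathbb{E}(\hat{h}_\ell^{2\hat{\kappa}_a})^{1/2}$ and its analogue at level $\ell-1$. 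On the coarsest level, where $\hat{u}_{-1}^{CV} = 0$, I would instead bound $\sqrt{Var(\hat{u}_0^{CV})} = \sqrt{Var(\hat{u}_0 - \hat{u}_0^{(\nu_s)})} \leq \|\hat{u}_0\|_{L^2(\Omega;V)} + \|\hat{u}_0^{(\nu_s)}\|_{L^2(\Omega;V)}$ by a constant depending only on the data, using Theorem~\ref{TH:ExistenceOfSolutionSubord} and the bound~\eqref{EQ:SolBoundSmoothProblem}; this produces the lone $1/\sqrt{M_0}$ term.

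Finally I would combine $I_1$ and $I_2$ and invoke the monotonicity of $(\varepsilon_{W,\ell})$, $(\varepsilon_{l,\ell})$ and $(\hat{h}_\ell)$ to reindex the level sum exactly as at the end of the proof of Theorem~\ref{TH:MLMCEst}, arriving at the claimed estimate. The main obstacle is the variance bound in $I_2$: one must insert the level-independent quantity $u_{K,A} - u_{K,A}^{(\nu_s)}$ so that the control-variate cancellation is preserved and the estimate depends only on the discretization-decaying errors; inserting $u_{K,A}$ and $u_{K,A}^{(\nu_s)}$ separately would instead introduce the non-vanishing smoothing bias $\|u_{K,A} - u_{K,A}^{(\nu_s)}\|_{L^2(\Omega;V)}$ and destroy the bound. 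The only other point requiring care is the domination of the smoothed FE rate by the rough one, $\mathbb{E}(\hat{h}_\ell^2)^{1/2} \leq \mathbb{E}(\hat{h}_\ell^{2\hat{\kappa}_a})^{1/2}$.
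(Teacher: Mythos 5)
Your proposal follows the paper's own proof essentially verbatim: the same bias/statistical-error split, the same use of Theorems \ref{TH:ErrorBoundE2} and \ref{TH:ErrorBoundE2SmoothProblem} with Assumptions \ref{ASS:ConvRateFEMMeanSquareError} and \ref{ASS:ConvRateFEMMeanSquareErrorSmoothedProblem}, and the same decisive insertion of the level-independent quantity $u_{K,A}-(u_{K,A}^{(\nu_s)}-\mathbb{E}(u_{K,A}^{(\nu_s)}))$ so that the control-variate constant cancels within each level increment. The argument is correct; the only cosmetic difference is that on the coarsest level you bound the FE approximations $\hat{u}_0,\hat{u}_0^{(\nu_s)}$ directly rather than the exact solutions, which requires the (standard) stability of the Galerkin projection but changes nothing.
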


\begin{proof}
We split the error by
\begin{align*}
\|\mathbb{E}(u_{K,A}) - E^{CV,L}(\hat{u}_{\varepsilon_{W,L},\varepsilon_{l,L},L}^{CV})\|_{L^2(\Omega;V)}&\leq \|\mathbb{E}(u_{K,A}) - \mathbb{E}(\hat{u}_{\varepsilon_{W,L},\varepsilon_{l,L},L}^{CV}) \|_{L^2(\Omega;V)} \\
 &+ \|\mathbb{E}(\hat{u}_{\varepsilon_{W,L},\varepsilon_{l,L},L}^{CV}) - E^{CV,L}(\hat{u}_{\varepsilon_{W,L},\varepsilon_{l,L},L}^{CV})\|_{L^2(\Omega;V)}\\
 &=:I_1+I_2.
\end{align*}
For the first term we estimate using Theorem \ref{TH:ErrorBoundE2} and Assumption \ref{ASS:ConvRateFEMMeanSquareError} together with Theorem \ref{TH:ErrorBoundE2SmoothProblem} and Assumption \ref{ASS:ConvRateFEMMeanSquareErrorSmoothedProblem} to obtain
 \begin{align*}
 I_1 &\leq \mathbb{E}(\|u_{K,A} - \hat{u}_{\varepsilon_{W,L},\varepsilon_{l,L},L}\|_V) + 
 \mathbb{E}(\|\hat{u}_{\varepsilon_{W,L},\varepsilon_{l,L},L}^{(\nu_s)}- u_{K,A}^{(\nu_s)}\|_V)\\
 &\leq \|u_{K,A} - u_{K,A}^{(\varepsilon_{W,L},\varepsilon_{l,L})}\|_{L^2(\Omega;V)} + \|u_{K,A}^{(\varepsilon_{W,L},\varepsilon_{l,L})} - \hat{u}_{\varepsilon_{W,L},\varepsilon_{l,L},L}\|_{L^2(\Omega;V)}\\
 & + \|u_{K,A}^{(\nu_s)} - u_{K,A}^{(\nu_s,\varepsilon_{W,L},\varepsilon_{l,L})}\|_{L^2(\Omega;V)} + \|u_{K,A}^{(\nu_s,\varepsilon_{W,L},\varepsilon_{l,L})} - \hat{u}_{\varepsilon_{W,L},\varepsilon_{l,L},L}^{(\nu_s)}\|_{L^2(\Omega;V)}\\
 &\leq C_{reg}C(\overline{a}_-,\mathcal{D},\nu_s)(\varepsilon_{W,L}^\gamma + \varepsilon_{l,L}^\frac{1}{2c}) + \hat{C}_{u}\mathbb{E}(\hat{h}_L^{2\hat{\kappa}_a})^\frac{1}{2} +  \hat{C}_{u,s} \mathbb{E}(\hat{h}_L^{2})^{1/2}\\
 & \leq C_{reg}C(\overline{a}_-,\mathcal{D},\nu_s)(\varepsilon_{W,L}^\gamma + \varepsilon_{l,L}^\frac{1}{2c}) + \tilde{C}\mathbb{E}(\hat{h}_L^{2\hat{\kappa}_a})^\frac{1}{2}.
 \end{align*}
 For the second term we use the definition of the MLMC-CV estimator $E^{CV,L}$ and Lemma \ref{LE:SLMCstandardEst} to estimate
 
 \begin{align*}
  I_2&\leq \sum_{\ell=0}^{L} \|\mathbb{E}(\hat{u}_{\varepsilon_{W,\ell},\varepsilon_{l,\ell},\ell}^{CV} - \hat{u}_{\varepsilon_{W,\ell-1},\varepsilon_{l,\ell-1},\ell-1}^{CV}) - E_{M_\ell}(\hat{u}_{\varepsilon_{W,\ell},\varepsilon_{l,\ell},\ell}^{CV}-\hat{u}_{\varepsilon_{W,\ell-1},\varepsilon_{l,\ell-1},\ell-1}^{CV})\|_{L^2(\Omega;V)}\\
 &= \sum_{\ell=0}^L \Big(\frac{Var(\hat{u}_{\varepsilon_{W,\ell},\varepsilon_{l,\ell},\ell}^{CV}-\hat{u}_{\varepsilon_{W,\ell-1},\varepsilon_{l,\ell-1},\ell-1}^{CV})}{M_\ell}\Big) ^{1/2}\\
  &\leq \sum_{\ell=0}^L \frac{1}{\sqrt{M_\ell}}\|\hat{u}_{\varepsilon_{W,\ell},\varepsilon_{l,\ell},\ell}^{CV}-\hat{u}_{\varepsilon_{W,\ell-1},\varepsilon_{l,\ell-1},\ell-1}^{CV}\|_{L^2(\Omega;V)}\\
 &\leq\sum_{\ell=0}^L \frac{1}{\sqrt{M_\ell}}\Big( \|\hat{u}_{\varepsilon_{W,\ell},\varepsilon_{l,\ell},\ell}^{CV} - u_{K,A} +(u_{K,A}^{(\nu_s)} - \mathbb{E}(u_{K,A}^{(\nu_s)}))\|_{L^2(\Omega;V)} \\
 &\phantom{\leq\sum_{\ell=0}^L \frac{1}{\sqrt{M_\ell}}\Big( \|\hat{u}_{\varepsilon_{W,\ell},\varepsilon_{l,\ell},\ell}^{CV} - u_{K,A} +b}+ \|u_{K,A} -(u_{K,A}^{(\nu_s)} - \mathbb{E}(u_{K,A}^{(\nu_s)}))- \hat{u}_{\varepsilon_{W,\ell-1},\varepsilon_{l,\ell-1},\ell-1}^{CV}\|_{L^2(\Omega;V)}\Big).
 \end{align*}

\noindent We estimate each term in this summand with the same strategy as we did for the term $I_1$ using Theorem \ref{TH:ErrorBoundE2} and Assumption \ref{ASS:ConvRateFEMMeanSquareError} together with Theorem \ref{TH:ErrorBoundE2SmoothProblem} and Assumption \ref{ASS:ConvRateFEMMeanSquareErrorSmoothedProblem} to obtain

\begin{align*}
\|\hat{u}_{\varepsilon_{W,\ell},\varepsilon_{l,\ell},\ell}^{CV} - u_{K,A} &+(u_{K,A}^{(\nu_s)} - \mathbb{E}(u_{K,A}^{(\nu_s)}))\|_{L^2(\Omega;V)}\\
& \leq \| \hat{u}_{\varepsilon_{W,\ell},\varepsilon_{l,\ell},\ell} - u_{K,A}\|_{L^2(\Omega,V)} + \|u_{K,A}^{(\nu_s)} - \hat{u}_{\varepsilon_{W,\ell},\varepsilon_{l,\ell},\ell}^{(\nu_s)}\|_{L^2(\Omega;V)}\\
& \leq C_{reg}C(\overline{a}_-,\mathcal{D},\nu_s)(\varepsilon_{W,\ell}^\gamma + \varepsilon_{l,\ell}^\frac{1}{2c}) + \tilde{C}\mathbb{E}(\hat{h}_\ell^{2\hat{\kappa}_a})^\frac{1}{2},
\end{align*}
for $\ell = 0,\dots,L$ and for $\ell = -1$ we get by Theorem \ref{TH:ExistenceOfSolutionSubord} and Equation \eqref{EQ:SolBoundSmoothProblem}
\begin{align*}
\|u_{K,A} -(u_{K,A}^{(\nu_s)} - \mathbb{E}(u_{K,A}^{(\nu_s)}))\|_{L^2(\Omega;V)}&\leq \|u_{K,A}\|_{L^2(\Omega;V)}  + \,Var(u_{K,A}^{(\nu_s)})^\frac{1}{2} \\
& \leq C(\overline{a}_-,\mathcal{D},\nu_s)(\|f\|_{L^q(\Omega;H)} + \|g\|_{L^2(\Omega;L^2(\Gamma_2))}).
\end{align*}
Together, we obtain
\begin{align*}
I_1+I_2& \leq C_{reg}C(\overline{a}_-,\mathcal{D},\nu_s)\Big( \varepsilon_{W,L}^\gamma + \varepsilon_{l,L}^\frac{1}{2c} + \frac{\varepsilon_{W,0}^\gamma + \varepsilon_{l,0}^\frac{1}{2c}}{\sqrt{M_0}} + \sum_{\ell=1}^L \frac{1}{\sqrt{M_\ell}}( \varepsilon_{W,\ell}^\gamma + \varepsilon_{l,\ell}^\frac{1}{2c} + \varepsilon_{W,\ell-1}^\gamma + \varepsilon_{l,\ell-1}^\frac{1}{2c}) \Big)\\
  &+ \tilde{C} \Big( \mathbb{E}(\hat{h}_L^{2\hat{\kappa}_a})^\frac{1}{2} + \frac{\mathbb{E}(\hat{h}_0^{2\hat{\kappa}_a})^{1/2}}{\sqrt{M_0}} +  \sum_{\ell=1}^L \frac{1}{\sqrt{M_\ell}} (\mathbb{E}(\hat{h}_\ell^{2\hat{\kappa}_a})^\frac{1}{2} +\mathbb{E}(\hat{h}_{\ell-1}^{2\hat{\kappa}_a})^\frac{1}{2})\Big)\\
  &+\frac{1}{\sqrt{M_0}} C(\overline{a}_-,\mathcal{D},\nu_s)(\|f\|_{L^q(\Omega;H)} + \|g\|_{L^q(\Omega;L^2(\Gamma_2))})\\
  &\leq C\Big(\varepsilon_{W,L}^\gamma + \varepsilon_{l,L}^\frac{1}{2c} + \mathbb{E}(\hat{h}_L^{2\hat{\kappa}_a})^{1/2} + \frac{1}{\sqrt{M_0}} +\sum_{\ell=0}^{L-1} \frac{\varepsilon_{W,\ell}^\gamma + \varepsilon_{l,\ell}^\frac{1}{2c} + \mathbb{E}(\hat{h}_\ell^{2\hat{\kappa}_a})^{1/2}}{\sqrt{M_{\ell+1}}}\Big),
\end{align*}
where we used monotonicity of $(\varepsilon_{W,\ell})_{\ell=0}^L$, $(\varepsilon_{l,\ell})_{\ell=0}^L$ and $(\hat{h}_\ell)_{\ell=0}^L$ in the last step.
\end{proof}
As it is the case for the a-priori error bound for the MLMC estimator (see Theorem \ref{TH:MLMCEst}), Theorem \ref{TH:MLMCCVEst} allows for an equilibration of all error contributions resulting from the approximation of the diffusion coefficient and the approximation of the pathwise solution by the FE method, which can be seen by the following corollary.

 \begin{corollary}\label{COR:ParameterChoiceMLMCCV}
 Let the assumptions of Theorem \ref{TH:MLMCCVEst} hold. For $L\in\mathbb{N}$ and given (stochastic) refinement parameters $\hat{h}_0>\dots>\hat{h}_L>0$ choose $\varepsilon_{W,\ell}>0$ and $\varepsilon_{l,\ell}>0$ such that 
\begin{align*}
\varepsilon_{W,\ell} \simeq \mathbb{E}(\hat{h}_\ell^{2\hat{\kappa}_a})^{1/(2\gamma)} \text{ and } \varepsilon_{l,\ell} \simeq \mathbb{E}(\hat{h}_\ell^{2\hat{\kappa}_a})^{c}.
\end{align*} 
 and sample numbers $M_\ell\in\mathbb{N}$ such that for some positive parameter $\xi>0$ it holds
 \begin{align*} 
 M_0\simeq \mathbb{E}(\hat{h}_L^{2\hat{\kappa}	_a})^{-1} \text{ and } M_\ell\simeq \mathbb{E}(\hat{h}_L^{2\hat{\kappa}_a})^{-1}\mathbb{E}(\hat{h}_{\ell-1}^{2\hat{\kappa}_a})(\ell+1)^{2(1+\xi)}\text{ for }\ell=1,\dots,L.
 \end{align*}

 Then, it holds
 \begin{align*}
 \|\mathbb{E}(u_{K,A}) - E^{CV,L}(\hat{u}_{\varepsilon_{W,L},\varepsilon_{l,L},L}^{CV})\|_{L^2(\Omega;V)}=\mathcal{O}(\mathbb{E}(\hat{h}_L^{2\hat{\kappa}_a})^{1/2}).
 \end{align*}
 \end{corollary}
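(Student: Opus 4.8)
The plan is to start directly from the a-priori bound furnished by Theorem~\ref{TH:MLMCCVEst} and substitute the prescribed parameter choices, exploiting the fact that the right-hand side of that bound is \emph{structurally identical} to the one appearing in Theorem~\ref{TH:MLMCEst}. Consequently the argument will mirror the proof of Corollary~\ref{COR:ParameterChoiceMLMC} almost verbatim, and no new ingredient beyond Theorem~\ref{TH:MLMCCVEst} itself is required. First I would record the effect of the choices $\varepsilon_{W,\ell} \simeq \mathbb{E}(\hat{h}_\ell^{2\hat{\kappa}_a})^{1/(2\gamma)}$ and $\varepsilon_{l,\ell} \simeq \mathbb{E}(\hat{h}_\ell^{2\hat{\kappa}_a})^{c}$: raising to the exponents $\gamma$ and $1/(2c)$ respectively, both $\varepsilon_{W,\ell}^\gamma$ and $\varepsilon_{l,\ell}^{1/(2c)}$ become comparable to $\mathbb{E}(\hat{h}_\ell^{2\hat{\kappa}_a})^{1/2}$, so that every numerator $\varepsilon_{W,\ell}^\gamma + \varepsilon_{l,\ell}^{1/(2c)} + \mathbb{E}(\hat{h}_\ell^{2\hat{\kappa}_a})^{1/2}$ collapses to a fixed multiple of $\mathbb{E}(\hat{h}_\ell^{2\hat{\kappa}_a})^{1/2}$.

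Next I would treat the four leading terms $\varepsilon_{W,L}^\gamma + \varepsilon_{l,L}^{1/(2c)} + \mathbb{E}(\hat{h}_L^{2\hat{\kappa}_a})^{1/2} + 1/\sqrt{M_0}$. The first three are each $\simeq \mathbb{E}(\hat{h}_L^{2\hat{\kappa}_a})^{1/2}$ by the previous step, while the rule $M_0 \simeq \mathbb{E}(\hat{h}_L^{2\hat{\kappa}_a})^{-1}$ forces $1/\sqrt{M_0} \simeq \mathbb{E}(\hat{h}_L^{2\hat{\kappa}_a})^{1/2}$, so these four together contribute $\mathcal{O}(\mathbb{E}(\hat{h}_L^{2\hat{\kappa}_a})^{1/2})$. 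For the remaining sum I would insert $M_{\ell+1} \simeq \mathbb{E}(\hat{h}_L^{2\hat{\kappa}_a})^{-1}\mathbb{E}(\hat{h}_\ell^{2\hat{\kappa}_a})(\ell+2)^{2(1+\xi)}$; the numerator $\simeq \mathbb{E}(\hat{h}_\ell^{2\hat{\kappa}_a})^{1/2}$ then cancels the factor $\mathbb{E}(\hat{h}_\ell^{2\hat{\kappa}_a})^{1/2}$ hidden in $\sqrt{M_{\ell+1}}$, leaving each summand equal to $\mathbb{E}(\hat{h}_L^{2\hat{\kappa}_a})^{1/2}(\ell+2)^{-(1+\xi)}$. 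Summing over $\ell = 0,\dots,L-1$ and bounding the tail by the convergent series $\sum_{k\geq 2} k^{-(1+\xi)} \leq \zeta(1+\xi) < \infty$ (using $\xi>0$) delivers the claimed order $\mathcal{O}(\mathbb{E}(\hat{h}_L^{2\hat{\kappa}_a})^{1/2})$.

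There is essentially no genuine obstacle: once Theorem~\ref{TH:MLMCCVEst} is available, the corollary is pure bookkeeping of exponents together with a $\zeta$-series estimate, exactly as in Corollary~\ref{COR:ParameterChoiceMLMC}. The only points demanding a little care are verifying that the cancellation in the sum is exact — that the $\mathbb{E}(\hat{h}_\ell^{2\hat{\kappa}_a})^{1/2}$ in the numerator is precisely balanced by the $\mathbb{E}(\hat{h}_\ell^{2\hat{\kappa}_a})$ appearing in $M_{\ell+1}$, independently of $\ell$ — and that the resulting $(\ell+2)^{-(1+\xi)}$ tail is summable uniformly in $L$, which is guaranteed by the strict positivity of $\xi$. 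Since the constant $C$ in Theorem~\ref{TH:MLMCCVEst} is independent of $L$ and of the level-dependent parameters, the bound $C(4 + \zeta(1+\xi))\,\mathbb{E}(\hat{h}_L^{2\hat{\kappa}_a})^{1/2}$ obtained in this way holds uniformly, completing the argument.
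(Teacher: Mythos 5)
Your proposal is correct and takes essentially the same route as the paper: the paper's proof of Corollary~\ref{COR:ParameterChoiceMLMCCV} simply refers back to Corollary~\ref{COR:ParameterChoiceMLMC}, whose argument is precisely the substitution-and-$\zeta$-series bookkeeping you carry out. Your verification that the numerator $\mathbb{E}(\hat{h}_\ell^{2\hat{\kappa}_a})^{1/2}$ cancels against the factor hidden in $\sqrt{M_{\ell+1}}$, leaving the summable tail $(\ell+2)^{-(1+\xi)}$, matches the paper's computation exactly.
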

 
 \begin{proof}
 See Corollary \ref{COR:ParameterChoiceMLMC}.
 \end{proof}

We want to emphasize that Theorem \ref{TH:MLMCCVEst} and Corollary \ref{COR:ParameterChoiceMLMCCV} imply the same asymptotical convergence of the MLMC-CV estimator as the MLMC estimator which has been considered in Section \ref{sec:MLMC}. However, it is to be expected that the MLMC-CV estimator is more efficient due to the samplewise correction by the Control Variate and the resulting variance reduction on the different levels. We close this section with a remark on how to compute the mean of the Control Variate.

\begin{rem}\label{rem:ApprCVMean}
Unlike we assumed the CV mean $\mathbb{E}(u_{K,A}^{(\nu_s)})$ is in general unknown for fixed parameters $K,A,\nu_s>0$. Corollary \ref{COR:ParameterChoiceMLMCCV} yields that it is sufficient to approximate the CV mean with any estimator which is convergent with order $\mathbb{E}(h_L^{2\hat{\kappa}_a})^{1/2}$. In fact, we denote by 
\begin{align*}
Est_{CV}^{L}(u_{K,A}^{(\nu_s)})\approx \mathbb{E}(u_{K,A}^{(\nu_s)}),
\end{align*}
the realization of the desired estimator and we assume the existence of a constant $C_{CV}>0$ such that it holds
\begin{align*}
\|Est_{CV}^L(u_{K,A}^{(\nu_s)})- \mathbb{E}(u_{K,A}^{(\nu_s)})\|_{L^2(\Omega;V)}\leq C_{CV}\mathbb{E}(\hat{h}_L^{2\hat{\kappa}_a})^{1/2},
\end{align*}
in the notation of Theorem \ref{TH:MLMCCVEst}. Further, instead of the basis experiment $\hat{u}_{\varepsilon_{W,\ell},\varepsilon_{l,\ell},\ell}^{CV}$ from \eqref{EQ:BasisExCV} we consider
\begin{align*}
 \tilde{u}_{\varepsilon_{W,\ell},\varepsilon_{l,\ell},\ell}^{CV} :=  \hat{u}_{\varepsilon_{W,\ell},\varepsilon_{l,\ell},\ell} -( \hat{u}_{\varepsilon_{W,\ell},\varepsilon_{l,\ell},\ell}^{(\nu_s)} - Est_{CV}^L(u_{K,A}^{(\nu_s)})),\text{ for }\ell = 0,\dots,L,
 \end{align*}
 and we set $\tilde{u}_{\varepsilon_{W,-1},\varepsilon_{l,-1},-1}^{CV} = 0$ and denote the corresponding MLMC-CV estimator by \newline $E^{CV,L}(\tilde{u}_{\varepsilon_{W,\ell},\varepsilon_{l,\ell},\ell}^{CV})$. Then, by Corollary \ref{COR:ParameterChoiceMLMCCV}, it holds
 \begin{align*}
  &\|\mathbb{E}(u_{K,A}) - E^{CV,L}(\tilde{u}_{\varepsilon_{W,L},\varepsilon_{l,L},L}^{CV})\|_{L^2(\Omega;V)}\\
  &\leq  \|\mathbb{E}(u_{K,A}) - E^{CV,L}(\hat{u}_{\varepsilon_{W,L},\varepsilon_{l,L},L}^{CV})\|_{L^2(\Omega;V)} + \|Est_{CV}^L(u_{K,A}^{(\nu_s)})- \mathbb{E}(u_{K,A}^{(\nu_s)})\|_{L^2(\Omega;V)}\\
  &=\mathcal{O}(\mathbb{E}(\hat{h}_L^{2\hat{\kappa}_a})^{1/2}).
 \end{align*}
 For example, the CV mean could be estimated by another MLMC estimator on the level $L$ where the parameters are choosen according to Corollary \ref{COR:ParameterChoiceMLMCCV}.
\end{rem}

	
\section{Numerical examples}\label{sec:numerics}
 In the following section we present numerical examples for the estimation of the mean of the solution to the elliptic PDE \eqref{EQ:EllProblemCutUpperCut} - \eqref{EQ:DiffCoeffDefiCutUpperCut}. We perform convergence tests with the proposed multilevel Monte Carlo estimators defined in Section \ref{sec:MLMC} and Section \ref{sec:MLMCCV}. In our numerical examples, we consider different levels $L\in\mathbb{N}$ and choose the sample numbers $(M_\ell,~\ell=0,\dots,L)$ and the level dependent approximation parameters for the GRFs $(\varepsilon_{W,\ell},~\ell=0,\dots,L)$ and the subordinators $(\varepsilon_{l,\ell},~\ell=0,\dots,L)$ according to Corollary \ref{COR:ParameterChoiceMLMC}  resp. Corollary \ref{COR:ParameterChoiceMLMCCV} if nothing else is explicitly mentioned. Our numerical examples aim to compare the performance of the MLMC estimator with non-adapted triangulations with the MLMC estimator which uses sample-adapted triangulations. Further, we compare the performance of the standard MLMC estimator with the MLMC-CV estimator for high-intensity subordinators where the sample-adapted triangulations are not feasible anymore. All our numerical experiments are performed in MATLAB R2021a on a workstation with 16 GB memory and Intel quadcore processor with 3.4 GHz.

\subsection{PDE parameters}\label{subsec:PDEParameters}

In our numerical examples we consider the domain $\mathcal{D}=(0,1)^2$ and choose $\overline{a}\equiv 1/10$, $f\equiv 10$, $\Phi_1=1/100\,\exp(\cdot)$ and $\Phi_2=5\,|\cdot|$ for the diffusion coefficient in \eqref{EQ:DiffCoeffDefiCutUpperCut} if nothing else is explicitly mentioned. Further, we impose the following mixed Dirichlet-Neumann boundary conditions: we split the domain boundary $\partial\mathcal{D}$ by $\Gamma_1=\{0,1\}\times[0,1]$ and $\Gamma_2=(0,1)\times\{0,1\}$ and impose the pathwise mixed Dirichlet-Neumann boundary conditions
\begin{align*}
u_{K,A}(\omega,\cdot)=\begin{cases} 0.1  & ~on~ \{0\}\times [0,1] \\ 0.3 &~on~\{1\}\times [0,1]  \end{cases} \text{ and } a_{K,A}\overrightarrow{n}\cdot\nabla u_{K,A}=0 \text{ on } \Gamma_2,
\end{align*}
for $\omega\in\Omega$.
 We use a reference grid with $401\times 401$ equally spaced points on the domain $\mathcal{D}$ for interpolation and prolongation. The GRFs $W_1$ and $W_2$ are set to be a Mat\'ern-1.5-GRFs on $\mathcal{D}$ (resp. on $[0,K]^2$) with varying correlation lengths and variance parameters. Note that for Mat\'ern-1.5-GRFs we can expect $\gamma=1$ in Theorem \ref{TH:ErrorBoundE2} (see~ \cite[Section 7]{SGRFPDE}, \cite[Chapter 5]{AMultilevelMonteCarloAlgorithmForParabolicAdvectionDiffusionProblemsWithDiscontinuousCoefficients},~\cite{FiniteElementErrorAnalysisOfEllipticPDEsWIthRandomCoefficients}).
We simulate the GRFs $W_1$ and $W_2$ by the circulant embedding method (see~ \cite{AnalysisOfCirculantEmbeddingMethodsForSamplingStationaryRandomFields} and \cite{CirculantEmbeddingWithWMCAnalysisForEllipicPDEWithLognormalCoefficients}) to obtain approximations $W_1^{(\varepsilon_W)}\approx W_1$ and $W_2^{(\varepsilon_W)}\approx W_2$ as described in Section \ref{sec:apprOfGRFAndSubord}. In the experiments, we choose the diffusion cut-off $A$ in \eqref{EQ:DiffCoeffDefiCutUpperCut} large enough such that it has no influence on the numerical experiments for our choice of the GRFs, e.g. $A=100$ and choose the cut-off level $K$ for each experiment individually depending on the specific choice of the subordinator.

\subsection{Numerical examples for the MLMC estimator}\label{subsec:NumExMLMC}
In this section we conduct experiments with the MLMC estimator introduced in Section \ref{sec:MLMC}. We consider subordinators with different intensity and GRFs with varying correlation lengths in order to cover problems with different solution regularity. The comparatively low intensity of the subordinators used in this section (see also Subsection \ref{subsec:NumExMLCV}) allows the application of the pathwise sample-adapted approach introduced in Subsection \ref{subsec:SampleAdaptedFE} which can then be compared with the performance of the MLMC estimator with standard triangulations. During this section, we refer to these approaches with \textit{adapted FEM MLMC} and \textit{non-adapted FEM MLMC}. In our experiments, we use Poisson processes to subordinate the GRF $W_2$ in the diffusion coefficient in \eqref{EQ:DiffCoeffDefiCutUpperCut}. We consider both, Poisson processes with high and low intensity parameter leading to a different number of jumps in the diffusion coefficient. For the simulation of the Poisson processes we have two options: the processes may be approximated under Assumption \ref{ASS:CutProblemEigenvalues} \textit{v} but they may also be simulated exactly (see Subsection \ref{subsubsec:TheTwoApprMethods}). Hence, using Poisson subordinators allows for a detailed investigation of the approximation error caused by the approximation of the L\'evy subordinators $l_1$ and $l_2$. This will be explained briefly in the following subsection (see also \cite[Section 7.3.1]{SGRFPDE}).
\subsubsection{The two approximation methods}
\label{subsubsec:TheTwoApprMethods}
We simulate the Poisson processes by two conceptional different approaches: the first approach is an exact and grid-independent simulation of a Poisson process using the \textit{Uniform Method} (see \cite[Section 8.1.2]{LevyProcessesInFinance}). On the other hand, we may simulate approximations of the Poisson processes satisfying Assumption \ref{ASS:CutProblemEigenvalues} \textit{v} in the following way (see \cite[Section 7.3.1]{SGRFPDE}):
We sample values of the Poisson($\lambda$)-processes $l_1$ and $l_2$ on an equidistant grid $\{x_i,~i=0,...,N_l\}$ with $x_0=0$ and $x_{N_l}=1$ and step size $|x_{i+1}-x_i|\leq \varepsilon_l\leq 1$ for all $i=0,\dots,N_l-1$ and approximate the stochastic processes by a piecewise constant extension $l_j^{(\varepsilon_l)}\approx l_j$ of the values on the grid:
\begin{align*}
l_j^{(\varepsilon_l)}(x)=\begin{cases} l_j(x_i) & x\in[x_i,x_{i+1}) \text{ for } i=0,...,N_l-1,  \\ l_j(x_{N_l-1}) & x=1.  \end{cases}
\end{align*}
for $j=1,2$. Since the Poisson process has independent, Poisson distributed increments, values of the Poisson process at the discrete points $\{x_i,~i=0,\dots,N_l\}$ may be generated by adding independent Poisson distributed random variables with appropriately scaled intensity parameters. For the rest of this paper, we refer to this approach as the \textit{approximation approach} to simulate a Poisson process. Comparing the results of the MLMC experiments using the two described approaches for the simulation of the Poisson processes allows conclusions to be drawn on the numerical influence of an additional approximation of the subordinator (see Subsection \ref{subsubsec:Poisson1Subord}). This is further important especially for situations in which the choice of the subordinators does not allow for an exact simulation of the process.\\
 Note that Poisson processes satisfy Assumption \ref{ASS:CutProblemEigenvalues} \textit{v} with $\eta =+\infty$ (see \cite[Section 7.3.1]{SGRFPDE}). Since $\gamma=1$ (see Subection \ref{subsec:PDEParameters}), $\eta=+\infty$ and $f\in L^q(\Omega;H)$ for every $q\geq 1$ we choose for any positive $\delta>0$
\begin{align*}
r=2, ~c=b=1+\delta 
\end{align*}
to obtain from Theorem \ref{TH:ErrorBoundE2}
\begin{align*}
\|u_{K,A}-u_{K,A}^{(\varepsilon_W,\varepsilon_l)}\|_{L^2(\Omega;V)}\leq C_{reg}C(\overline{a}_-,\mathcal{D}) (\varepsilon_W + \varepsilon_l^ \frac{1}{2c}),
\end{align*}
where we have to assume that $j_{reg}> 2((1+\delta)/\delta-1)$ and $k_{reg}> 2(1+\delta)/\delta$ for the regularity constants $j_{reg},k_{reg}$ given in Assumption \ref{ASS:IntegrabilityOfSolGradient}. For $\delta=0.5$ we obtain 
\begin{align*}
\|u_{K,A}-u_{K,A}^{(\varepsilon_W,\varepsilon_l)}\|_{L^2(\Omega;V)}\leq C_{reg}C(\overline{a}_-,\mathcal{D}) (\varepsilon_W + \varepsilon_l^ \frac{1}{3}).
\end{align*}
Therefore, we get $\gamma=1$ and $c=1.5$ in the equilibration formula \eqref{EQ:RuleApprParams} for the numerical examples with the Poisson subordinators.

\subsubsection{Poisson($1$) subordinators} \label{subsubsec:Poisson1Subord}

In our first numerical example we use Poisson($1$) - subordinators. With this choice, we get on average one jump in each direction of the diffusion coefficient. The standard deviation and the correlation parameters for the GRF $W_1$ (resp. $W_2$) are set to be $\sigma_1^2= 1.5^2$ and $r_1=0.5$ (resp. $\sigma_2^2=0.1^2$ and $r_2=0.5$). Figure \ref{fig:poisson1_samples} shows samples of the diffusion coefficient and the corresponding PDE solution.

	\begin{figure}[ht]
	\centering
	\subfigure{\includegraphics[scale=0.14]{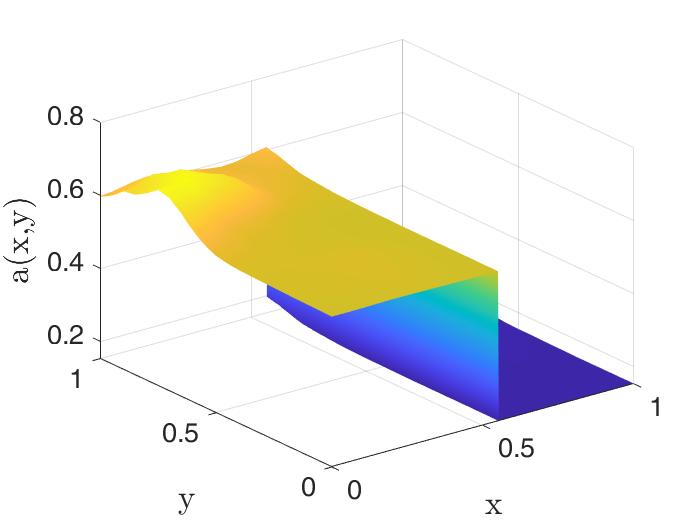}}
	\subfigure{\includegraphics[scale=0.14]{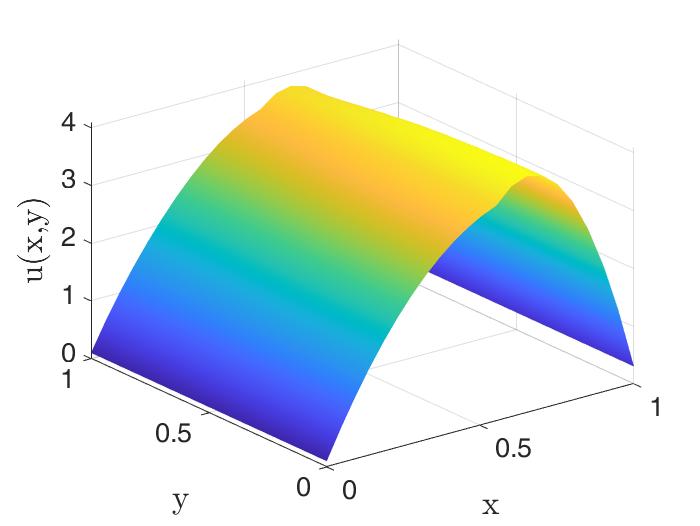}}
	\subfigure{\includegraphics[scale=0.14]{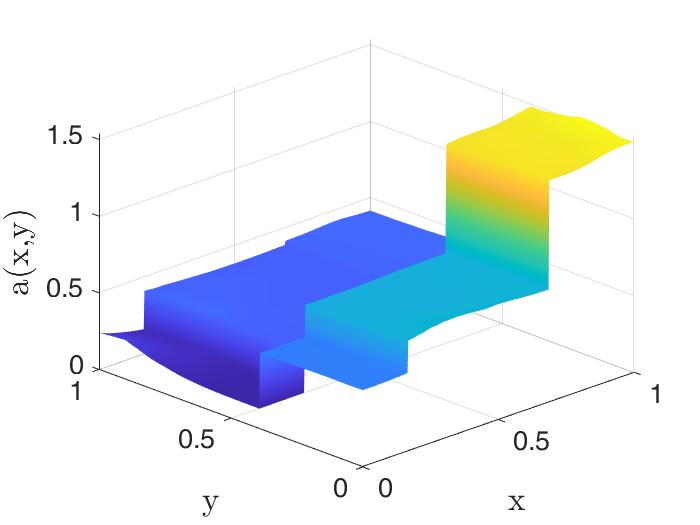}}
	\subfigure{\includegraphics[scale=0.14]{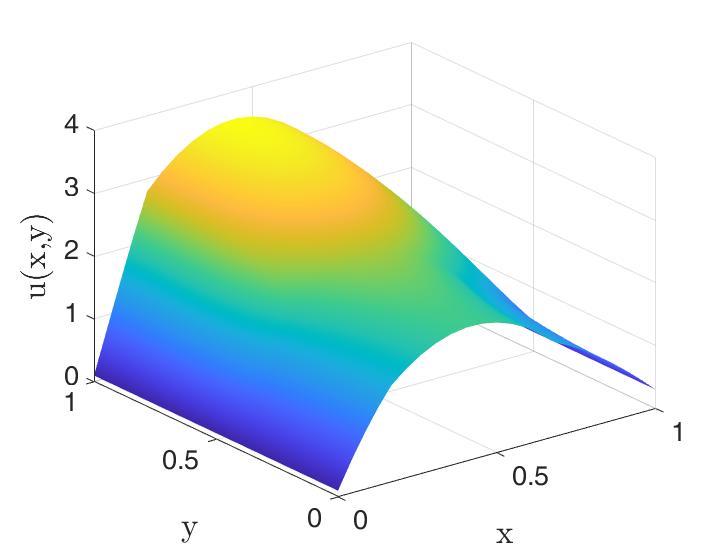}}
    \caption{Different samples of the diffusion coefficient with Poisson($1$)-subordinators and the corresponding PDE solutions with mixed Dirichlet-Neumann boundary conditions.}\label{fig:poisson1_samples}
    \end{figure}
\noindent The cut-off threshold $K$ for the subordinators in \eqref{EQ:DiffCoeffDefiCutUpperCut} is choosen to be $K=8$. With this choice we obtain
\begin{align*}
\mathbb{P}(\underset{t\in[0,1]}{\sup}\, l_j(t)\geq K)=\mathbb{P}(l_j(1)\geq K)\approx 1.1252e^{-06},
\end{align*}
for $j=1,2$, such that this cut-off has a negligible influence in the numerical example. We compute the RMSE $ \|\mathbb{E}(u_{K,A}) - E^L(\hat{u}_{\varepsilon_{W,L},\varepsilon_{l,L},L})\|_{L^2(\Omega;V)}$ for the sample-adapted and the non-adapted approach using 10 independent runs of the MLMC estimator on the levels $L=1,\dots,5$, where we set $\overline{h}_\ell=h_\ell=0.3\cdot 1.7^{-(\ell-1)}$, for $\ell = 1,\dots,5$. Further, we use a reference solution computed on level $7$ with singlelevel Monte Carlo. We run this experiment with both approaches for the simulation of the subordinators introduced in Subsection \ref{subsubsec:TheTwoApprMethods}: the approximation approach and the Uniform Method.
	\begin{figure}[ht]
	\centering
	\subfigure{\includegraphics[scale=0.45]{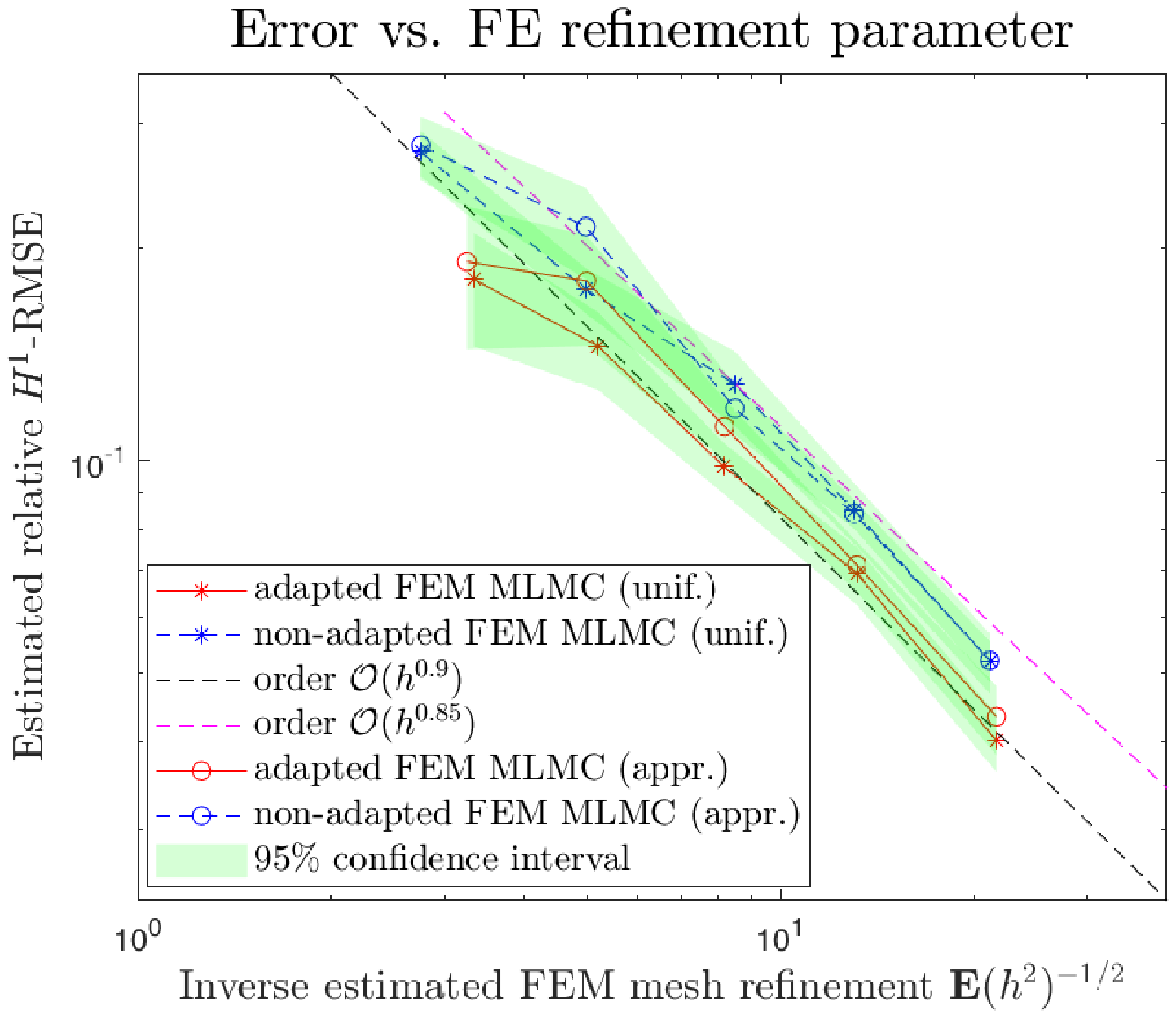}}
	\subfigure{\includegraphics[scale=0.49]{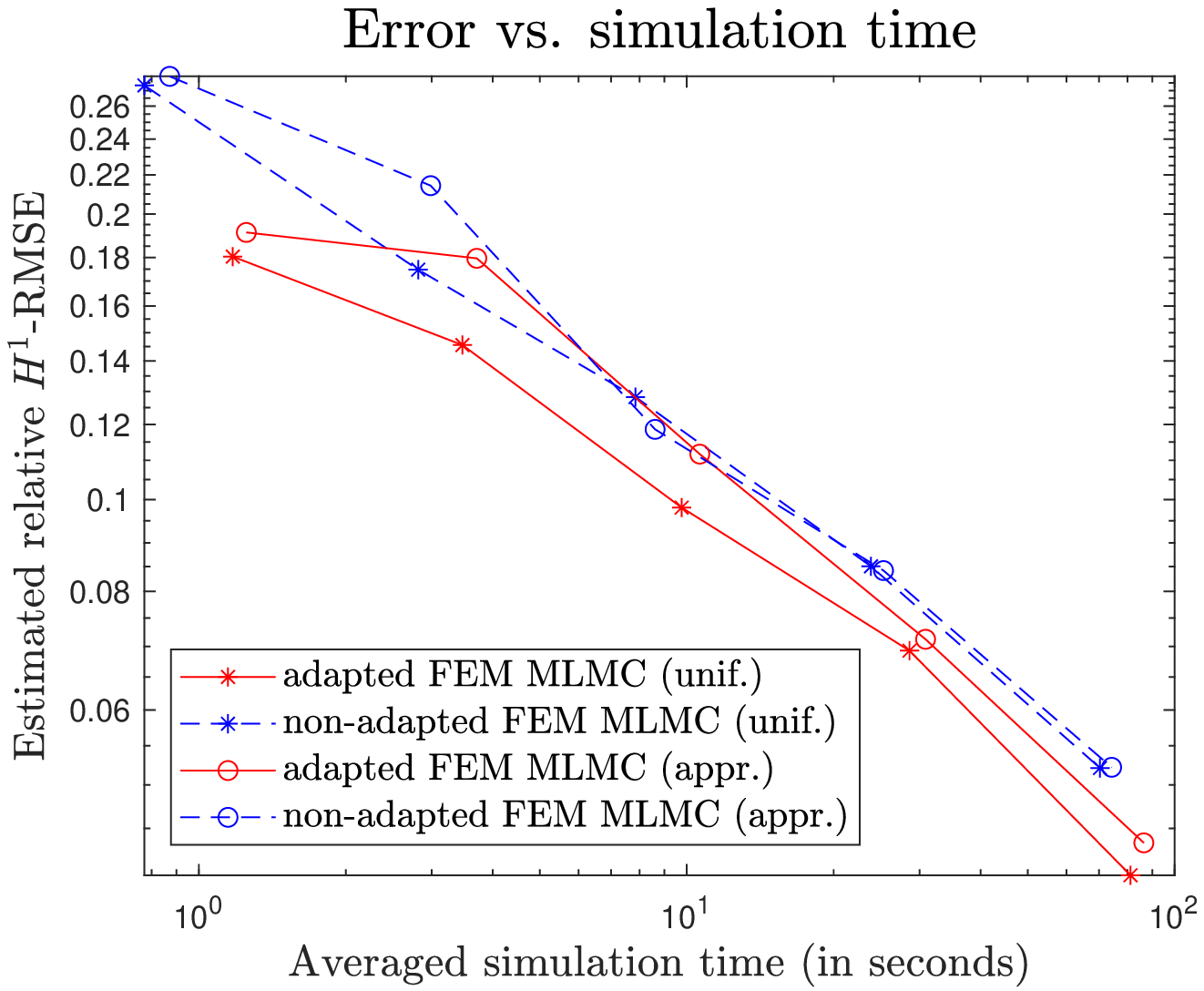}}
	\caption{Convergence of the MLMC estimator for Poisson($1$)-subordinators (left) and time-to-error plot (right).}	\label{fig:Poiss1MLMCResults}
	\end{figure}
	
\noindent The left graph of Figure \ref{fig:Poiss1MLMCResults} shows almost full order convergence of the adapted FEM MLMC method and a slightly slower convergence of the non-adapted FEM MLMC approach. Closer inspection of the figure shows that the choice of the simulation method of the subordinator does not affect the convergence rate of the MLMC estimator: where the Uniform Method yields a slightly smaller RMSE compared to the approximation approach in the sample-adapted case, the behaviour is almost the same for both simulation techniques in the non-adapted FEM MLMC method. The right hand side of Figure \ref{fig:Poiss1MLMCResults} demonstrates a slightly improved efficiency of adapted FEM MLMC compared to non-adapted FEM MLMC. The advantage of the sample-adapted approach can be further emphasized by the use of subordinators with a higher jump intensity and different correlation lengths of the underlying GRF, as we see in the following subsections.

\subsubsection{Poisson($5$) subordinators - smooth underlying GRF}\label{subsubsec:Poiss5Sm}
In the second numerical example we increase the jump-intensity of the subordinators and investigate the effect on the performance of the MLMC estimators. We use Poisson($5$)-subordinators leading to an expected number of 5 jumps in each direction in the diffusion coefficient. The standard deviation and the correlation parameter for the GRF $W_1$ (resp. $W_2$) are set to be $\sigma_1^2= 0.5^2$ and $r_1=0.5$ (resp. $\sigma_2^2=0.3^2$ and $r_2=0.5$). Figure \ref{fig:poisson5sm_samples} shows samples of the diffusion coefficient and the corresponding PDE solution.

	\begin{figure}[ht]
	\centering
	\subfigure{\includegraphics[scale=0.14]{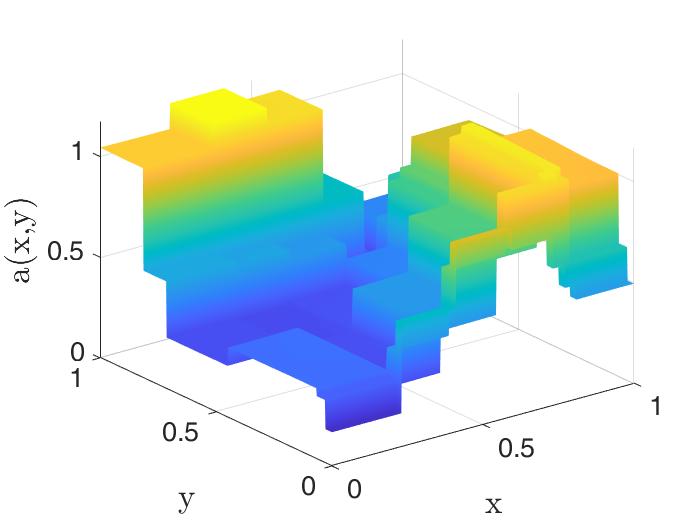}}
	\subfigure{\includegraphics[scale=0.14]{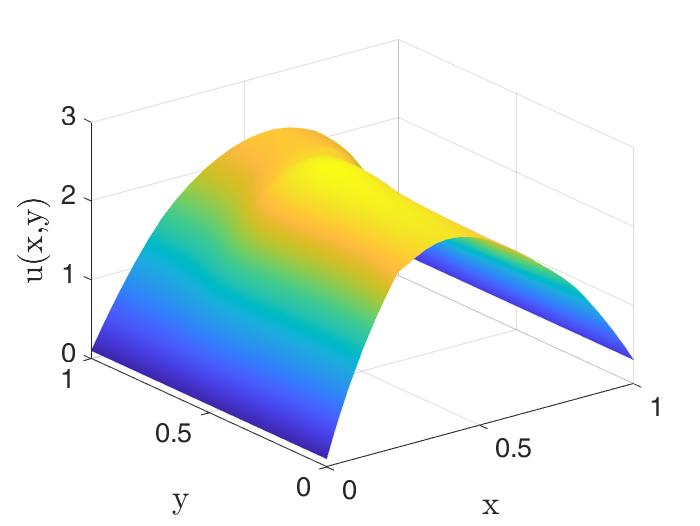}}
	\subfigure{\includegraphics[scale=0.14]{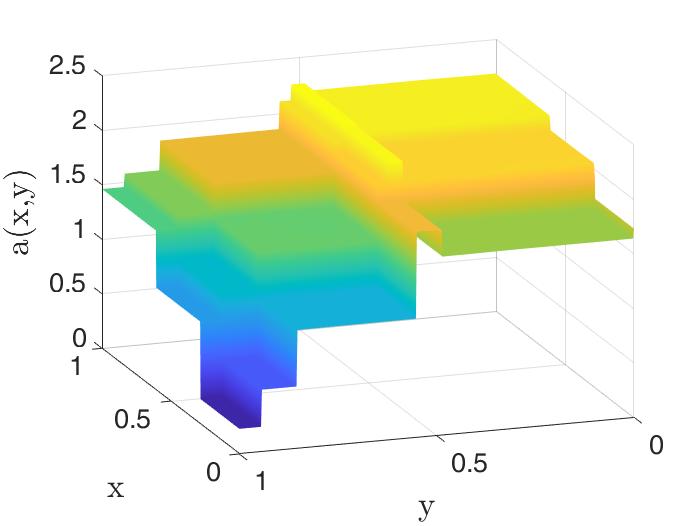}}
	\subfigure{\includegraphics[scale=0.14]{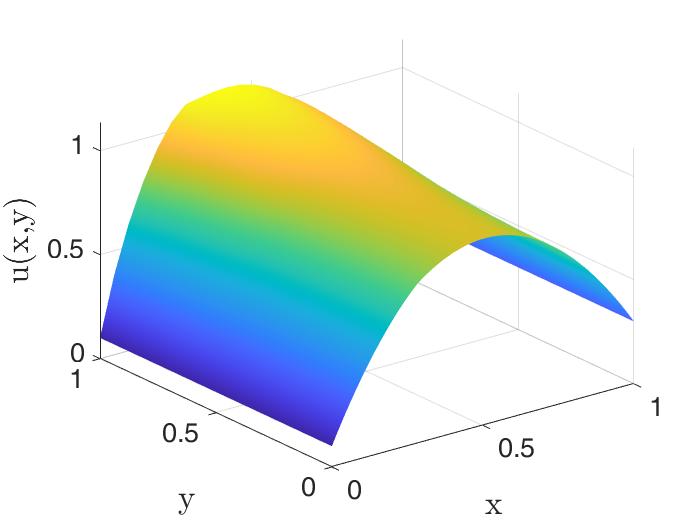}}
    \caption{Different samples of the diffusion coefficient with Poisson($5$)-subordinators and the corresponding PDE solutions with mixed Dirichlet-Neumann boundary conditions.}\label{fig:poisson5sm_samples}
    \end{figure}
    The cut-off threshold $K$ for the subordinators in \eqref{EQ:DiffCoeffDefiCutUpperCut} is choosen to be $K=15$. With this choice we obtain
\begin{align*}
 \mathbb{P}(\underset{t\in[0,1]}{\sup} l_j(t) \geq 15) = \mathbb{P}(l_j(1)\geq 15)\approx 6.9008e^{- 05},
\end{align*}
for $j=1,2$, such that this cut-off has a negligible influence in the numerical example. In order to avoid an expensive simulation of the GRF $W_2$ on the domain $[0,15]^2$ we set $K=1$ instead and consider the downscaled processes
\begin{align*}
\tilde{l}_j(t) = \frac{1}{15}l_j(t),
\end{align*}
for $t\in [0,1]$ and $j=1,2$. Note that this has no effect on the expected number of jumps of the processes. We use the Uniform Method to simulate the Poisson subordinators and estimate the RMSE of the MLMC estimators for the sample-adapted and the non-adapted approach using 10 independent MLMC runs on the levels $L=1,\dots,5$, where we set $\overline{h}_\ell=h_\ell=0.2\cdot 1.7^{-(\ell-1)}$ for $\ell = 1,\dots,5$. Further, we use a reference solution computed on level $7$ with singlelevel Monte Carlo.
		\begin{figure}[ht]
	\centering
	\subfigure{\includegraphics[scale=0.49]{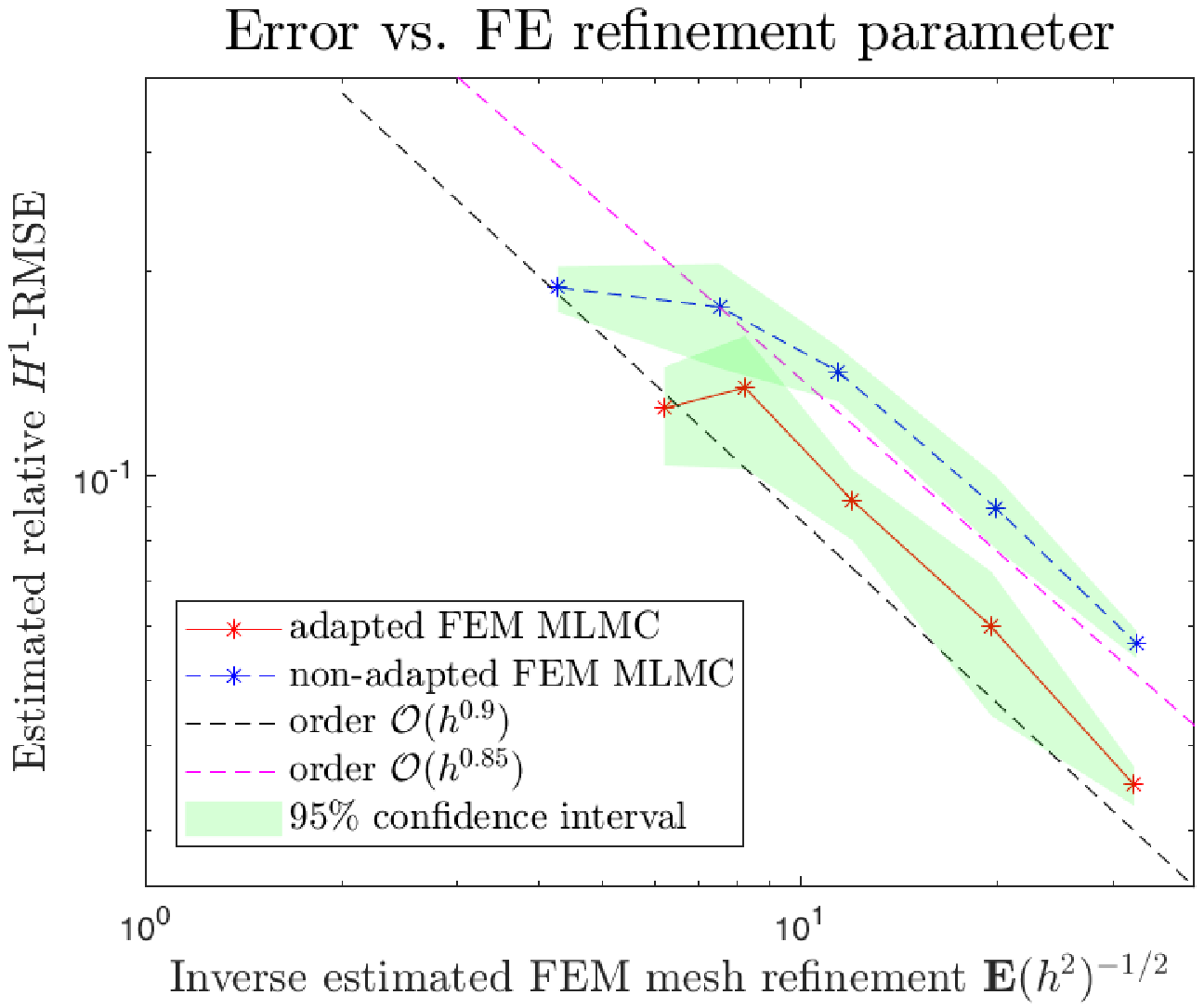}}
	\subfigure{\includegraphics[scale=0.49]{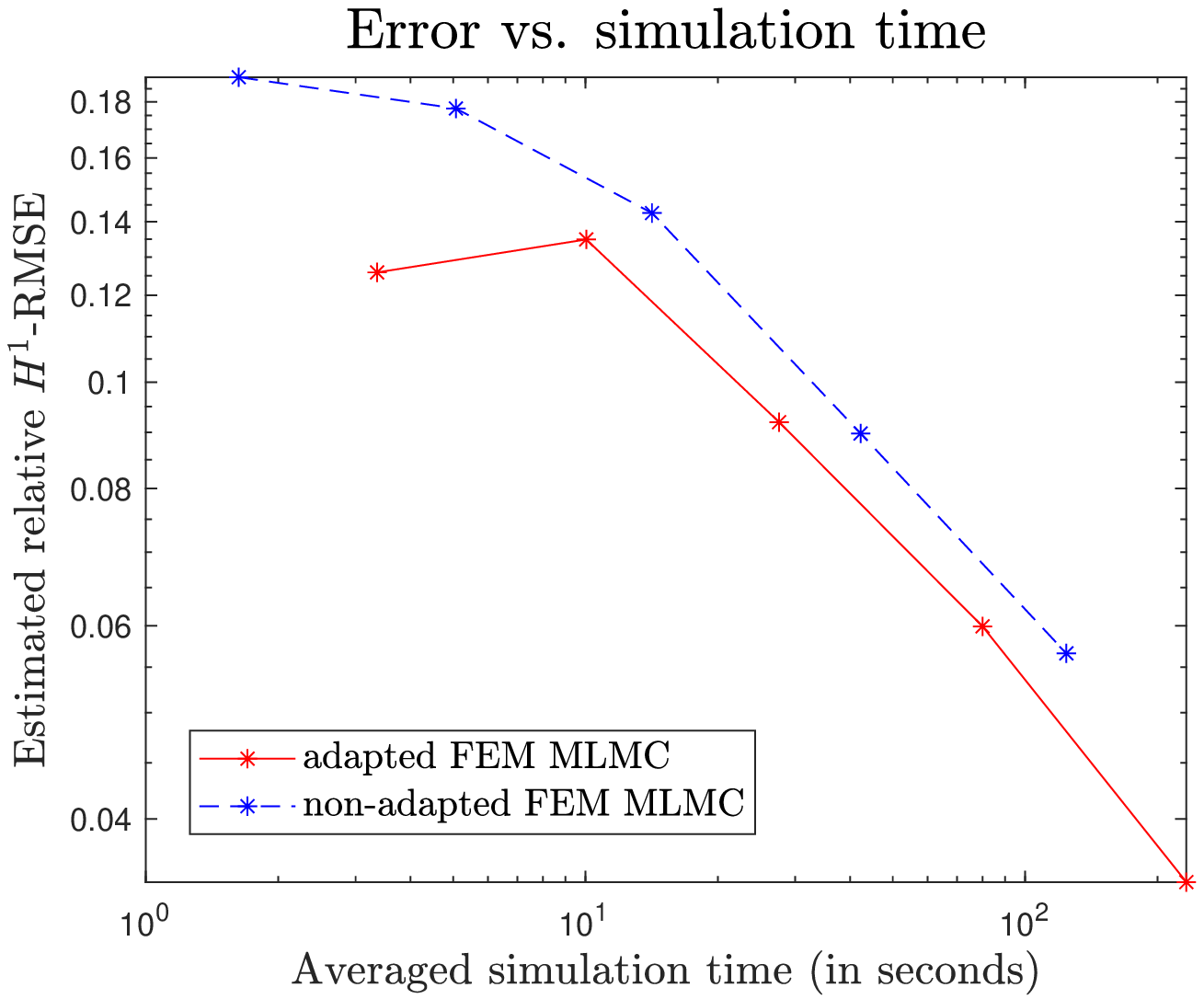}}
	\caption{Convergence of the MLMC estimator for Poisson($5$)-subordinators (left) and time-to-error plot (right).}	\label{fig:Poiss5SmoothMLMCResults}
	\end{figure}
	
Figure \ref{fig:Poiss5SmoothMLMCResults} shows almost full order convergence of the adapted FEM MLMC method and a slightly slower convergence for the non-adapted FEM MLMC approach. The right hand side of Figure \ref{fig:Poiss5SmoothMLMCResults} demonstrates a higher efficiency of the sample-adapted approach. However, one has to mention that differences in the performance of the estimators are rather small due to the comparatively high convergence rate for the non-adapted MLMC approach of approximately $0.85$. This is due to the fact that the jumps in the diffusion coefficient are comparatively small on account of the high correlation length of the underlying GRF $W_2$. We will see in the following subsection that a higher intensity of the jump heights has a significant negative influence on the performance of the non-adapted FEM MLMC approach.
	
\subsubsection{Poisson($5$) subordinators - rough underlying GRF}\label{subsubsec:P5SubordRough}

In the jump diffusion coefficient (see \eqref{EQ:DiffCoeffDefiCutUpperCut}), the jumps are generated by the subordinated GRF in the following way: the number of spatial jumps is determined by the subordinators and the jump intensities (measured in the differences in diffusion values across a jump) are essentially determined by the GRF $W_2$ and its correlation length. Hence, we may control the jump intensities of the diffusion coefficient by the correlation parameter of the underlying GRF $W_2$. In the following experiment we investigate the influence of the jump intensities of the diffusion coefficient on the convergence rates of the MLMC estimators.

\noindent In Subsection \ref{subsubsec:Poiss5Sm} we subordinated a Mat\'ern-1.5-GRF with correlation length $r_2=0.5$ by Poisson($5$)-processes. In the following experiment we set the correlation length of the GRF $W_2$ to $r_2=0.1$ and leave all the other parameters unchanged. Figure~\ref{fig:Grf_samples} presents samples of  the resulting GRFs with the different correlation lengths.

	\begin{figure}[ht]
	\centering
	\subfigure{\includegraphics[scale=0.14]{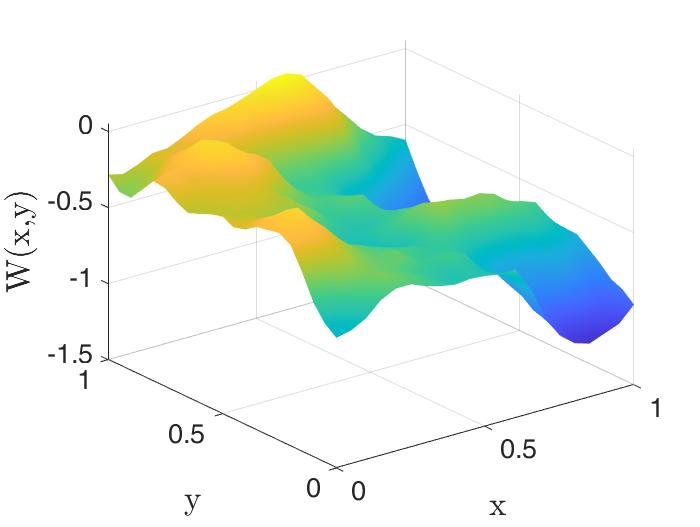}}
	\subfigure{\includegraphics[scale=0.14]{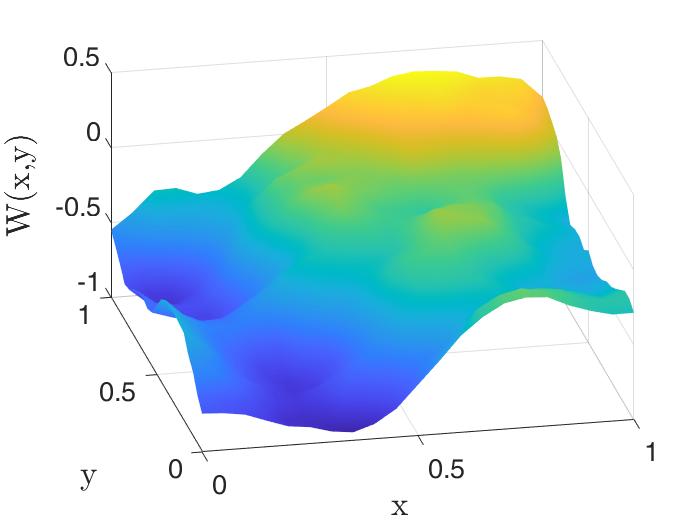}}
	\subfigure{\includegraphics[scale=0.14]{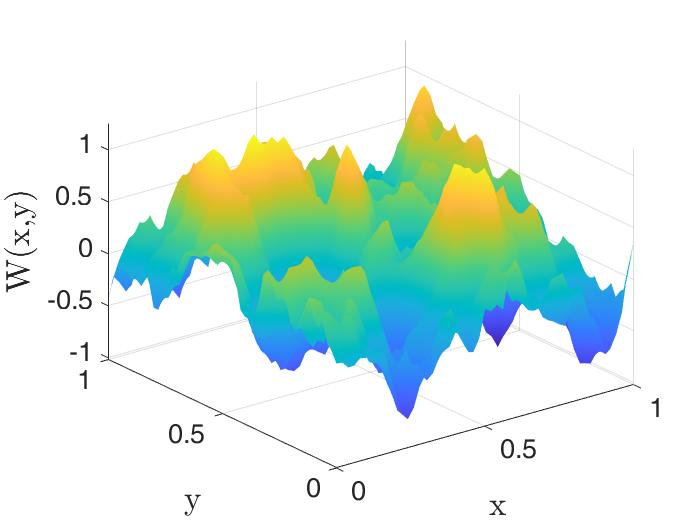}}
	\subfigure{\includegraphics[scale=0.14]{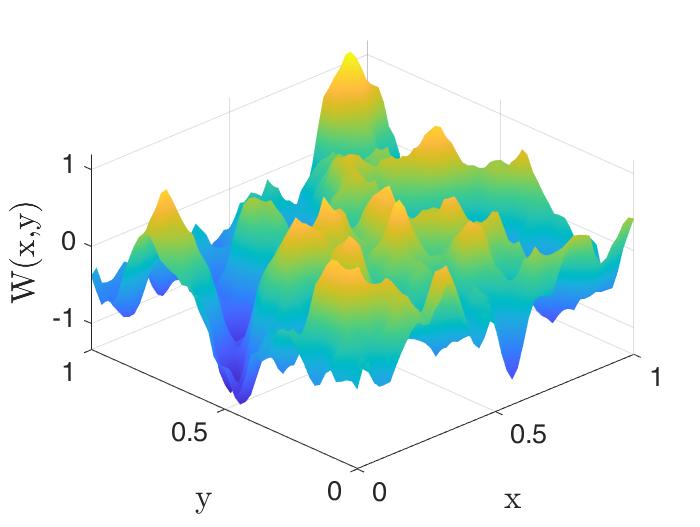}}
    \caption{Different samples of Matérn-1.5-GRFs with correlation lengths $r=0.5$ (left) and $r=0.1$ (right).}\label{fig:Grf_samples}
    \end{figure}

\noindent By construction of the diffusion coefficient, the subordination of GRFs with small correlation lengths (right plots in Figure \ref{fig:Grf_samples}) results in higher jump intensities in the diffusion coefficient as the subordination of GRFs with higher correlation lengths (left plots in Figure \ref{fig:Grf_samples}). This relationship is demonstrated in Figure \ref{fig:poisson5r_samples} (cf. Figure \ref{fig:poisson5sm_samples}).

	\begin{figure}[ht]
	\centering
	\subfigure{\includegraphics[scale=0.14]{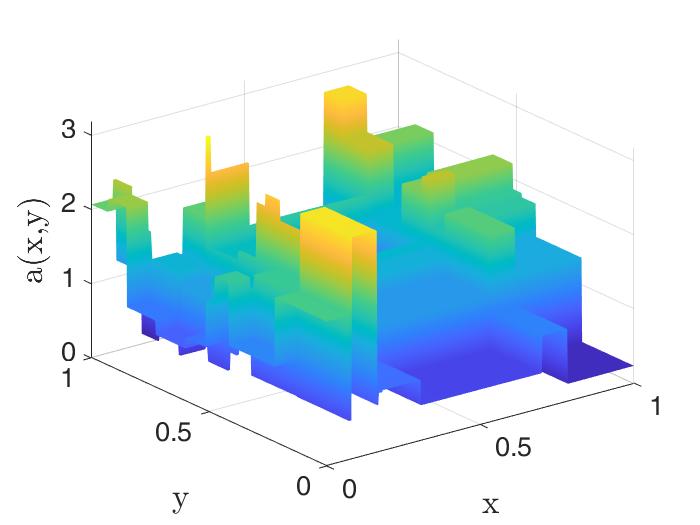}}
	\subfigure{\includegraphics[scale=0.14]{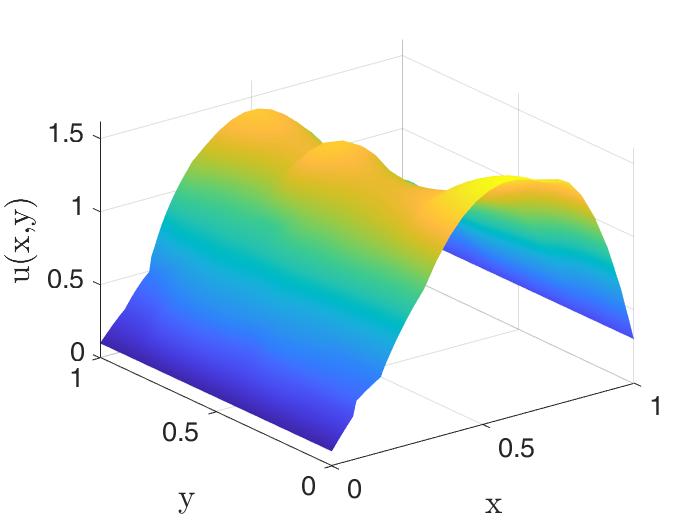}}
	\subfigure{\includegraphics[scale=0.14]{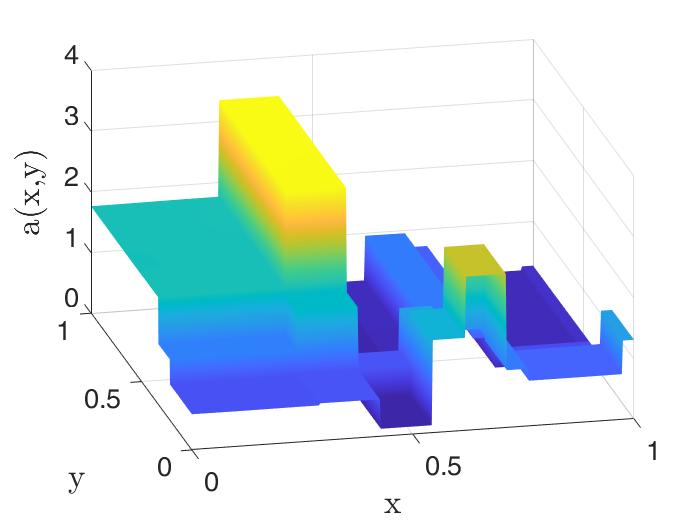}}
	\subfigure{\includegraphics[scale=0.14]{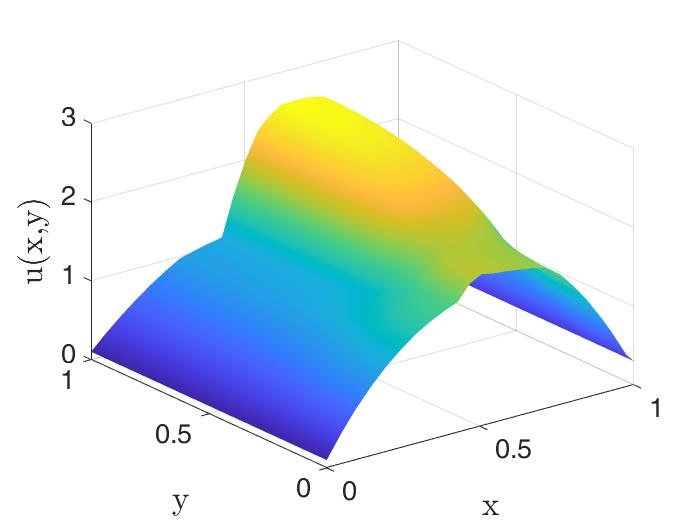}}
    \caption{Different samples of the diffusion coefficient with Poisson($5$)-subordinators and small correlation length in the underlying GRF and the corresponding PDE solutions with mixed Dirichlet-Neumann boundary conditions.}\label{fig:poisson5r_samples}
    \end{figure}
\noindent We use the Uniform Method to compute the RMSE of the MLMC estimators for the sample-adapted and the non-adapted approach using 10 independent MLMC runs on the levels $L=1,\dots,5$, where we set $\overline{h}_\ell=h_\ell=0.2\cdot 1.7^{-(\ell-1)}$ for $\ell = 1,\dots,5$. Further, we use a reference solution computed on level $7$ with singlelevel Monte Carlo. 
	\begin{figure}[ht]
	\centering
	\subfigure{\includegraphics[scale=0.49]{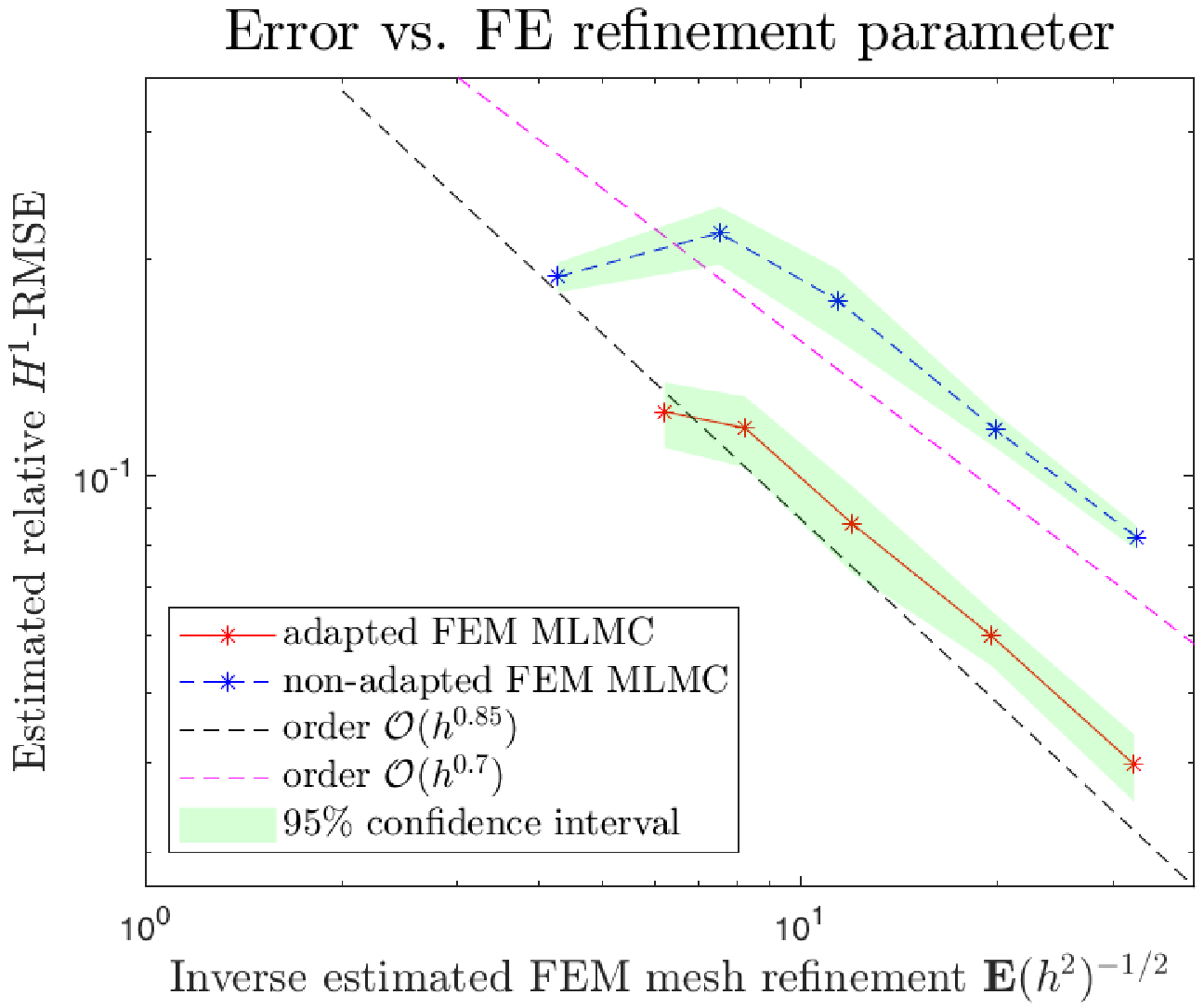}}
	\subfigure{\includegraphics[scale=0.49]{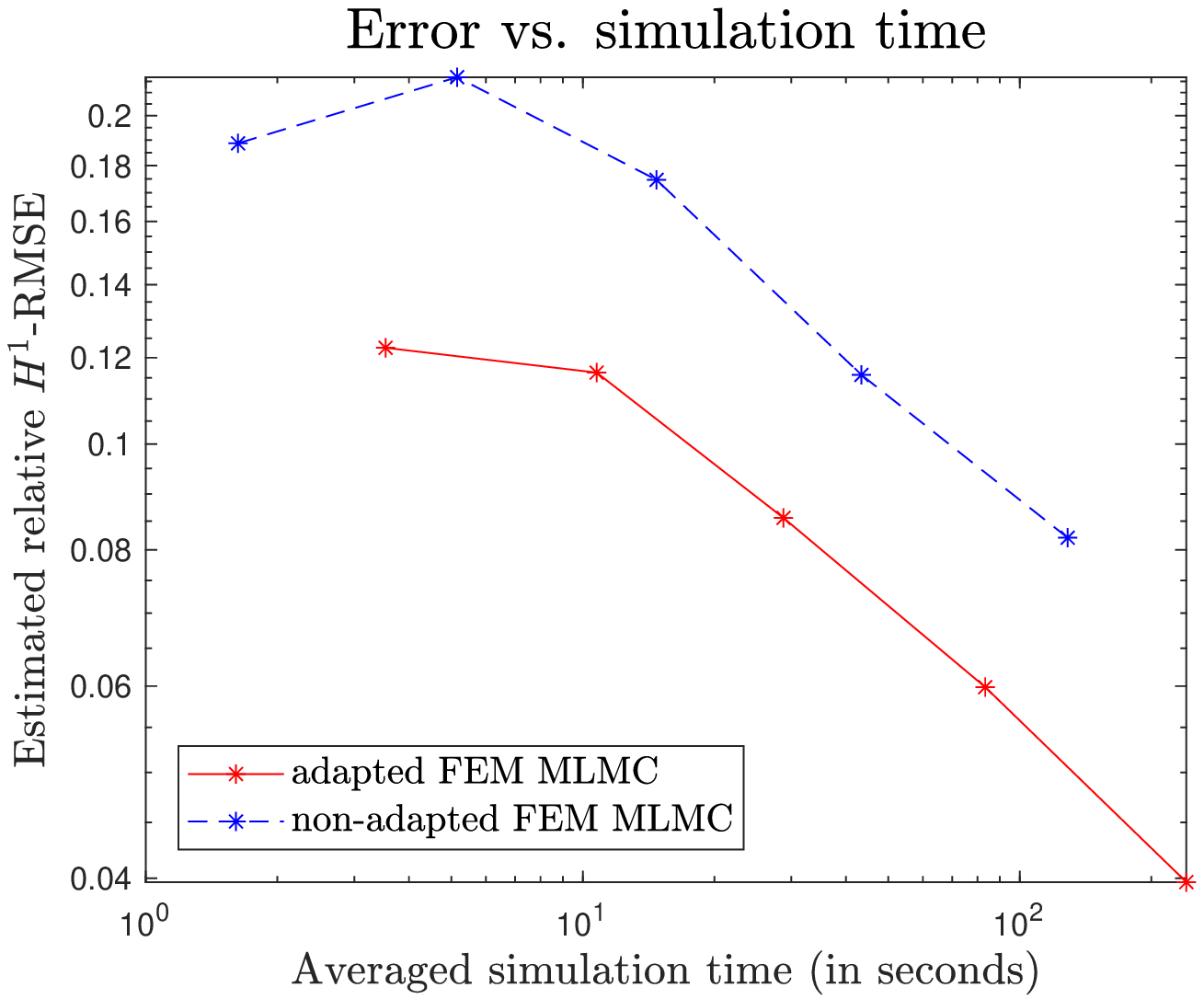}}
	\caption{Convergence of the MLMC estimator for Poisson($5$)-subordinators and small correlation length in the underlying GRF (left) and time-to-error plot (right).}	\label{fig:Poiss5roughMLMCResults}
	\end{figure}
Figure \ref{fig:Poiss5roughMLMCResults} reveals that the higher jump intensities in the diffusion coefficient have a negative impact on the convergence rates of both estimators: the adapted and the non-adapted FEM MLMC approach. We obtain a convergence rate of approximately $0.85$ for the adapted FEM MLMC estimator and a smaller rate of approximately $0.7$ for the MLMC estimator with non-adapted triangulations. Compared to the experiment discussed in Subsection \ref{subsubsec:Poiss5Sm}, where we used Poisson($5$)-subordinators and a higher correlation length in the underlying GRF, we observe that both convergence rates are smaller in the current example. This matches our expectations since the FEM convergence rate has been shown to be influenced by the regularity of the jump diffusion coefficient (see e.g. \cite{AStudyOfElliptic} and  \cite{RegularityResultsForLaplaceInterfaceProblemsInTroDimensions}). It is also important to mention that the RMSE is significantly smaller for the adapted FEM MLMC estimator due to the higher jump intensities in this example.  The higher efficiency of the sample-adapted approach is also demonstrated in the time-to-error plot on the right hand side of \ref{fig:Poiss5roughMLMCResults}: In this example we see a significant improvement in the time-to-error plot for the adapted FEM MLMC approach compared to the non-adapted FEM MLMC estimator.

\subsection{Numerical examples for the MLMC-CV-estimator}\label{subsec:NumExMLCV}

In the following section, we present numerical examples for the MLMC-CV  estimator introduced in Section \ref{sec:MLMCCV}. In Subsection \ref{subsec:NumExMLMC} we considered Poisson subordinators and compared the non-adapted FEM MLMC estimator with the sample-adapted approach and saw that the latter leads to an improved performance of the estimator. However, this approach is computationally not feasible anymore if we consider subordinators with infinite activity, like Gamma subordinators. The aim of this section is to compare the (non-adapted FEM) MLMC estimator with the MLMC-CV estimator for diffusion coefficients with Gamma-subordinated GRFs.
\subsubsection{Gamma subordinators}
 We approximate the Gamma processes in the same way as we approximate the Poisson subordinators in the approximation approach (see Subsection \ref{subsubsec:TheTwoApprMethods}) and obtain a valid approximation in the sense of Assumption \ref{ASS:CutProblemEigenvalues} \textit{v} for any $\eta<+ \infty$ (see \cite[Section 7.4]{SGRFPDE} and \cite{EunfuehrungInDieTheorieDerGammafunktion}). Since we aim to compare the performance of the MLMC estimator with the MLMC-CV estimator we use optimal sample numbers in the numerical experiments in this subsection: Assume level dependent FE discretization sizes $h_\ell$ are given, for $\ell=1,\dots,L$ with $L\in\mathbb{N}$. Further, we denote by $VAR_\ell$ the (estimated) variances of $u_{\varepsilon_{W,\ell},\varepsilon_{l,\ell},\ell} - u_{\varepsilon_{W,\ell-1},\varepsilon_{l,\ell-1},\ell-1}$ (resp. $u_{\varepsilon_{W,\ell},\varepsilon_{l,\ell},\ell}^{CV} - u_{\varepsilon_{W,\ell-1},\varepsilon_{l,\ell-1},\ell-1}^{CV}$ for the MLMC-CV estimator). The optimal sample numbers are then given by the formula
\begin{align*}
M_\ell = h_L^{-2}\sqrt{VAR_\ell}h_\ell \sum_{i=1}^L \sqrt{VAR_i}h_i^{-1},\text{ for } \ell=1,\dots,L,
\end{align*}
since this choice minimizes the variance of the MLMC(-CV) estimator for fixed computational costs (see \cite[Section 1.3]{GilesMLMCMethods}).
In our numerical experiments we choose $l_1 $ and $l_2$ to be Gamma($4,10$) processes. 
We set the diffusion cut-off to $K=2$ to obtain 
\begin{align*}
 \mathbb{P}(\underset{t\in[0,1]}{\sup} l_j(t) \geq K) = \mathbb{P}(l_j(1)\geq 2)\approx 3.2042e^{-06},
\end{align*}
for $j=1,2$. Hence, the influence of the subordinator cut-off is again negligible in our numerical experiments. Due to the high jump intensity we have to choose a sufficiently small smoothness parameter $\nu_s$ since otherwise important detailed information of the diffusion coefficient might be unused. In our two numerical examples, we choose $\nu_s=0.01$ which is small enough for Gamma($4,10$)-subordinators. The expectation of the mean of the control variate $\mathbb{E}(u_{K,A}^{(\nu_s)})$ is estimated by a non-adapted FEM MLMC estimator on level $L$ (see Remark \ref{rem:ApprCVMean}). The standard deviation of the $W_1$ is set to be $\sigma_1^2=1.5^2$ and the correlation length is defined by $r_1=0.5$. The parameters of the GRF $W_2$ are varied in our numerical experiments.

\subsubsection{MLMC-CV vs. MLMC for infinite activity subordinators}
In this numerical example we choose $\sigma_2^2=0.3^2$ and $r_2=0.05$ for the GRF $W_2$. Figure \ref{fig:GammaR_samples} shows samples of the diffusion coefficient and the corresponding PDE solutions.
	\begin{figure}[ht]
	\centering
	\subfigure{\includegraphics[scale=0.14]{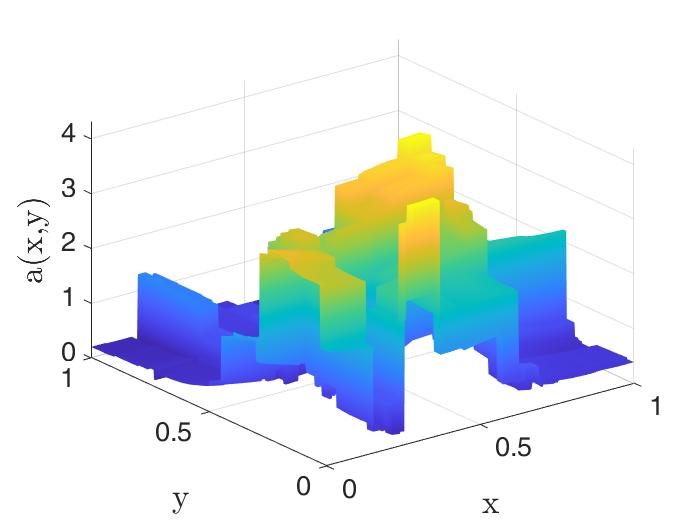}}
	\subfigure{\includegraphics[scale=0.14]{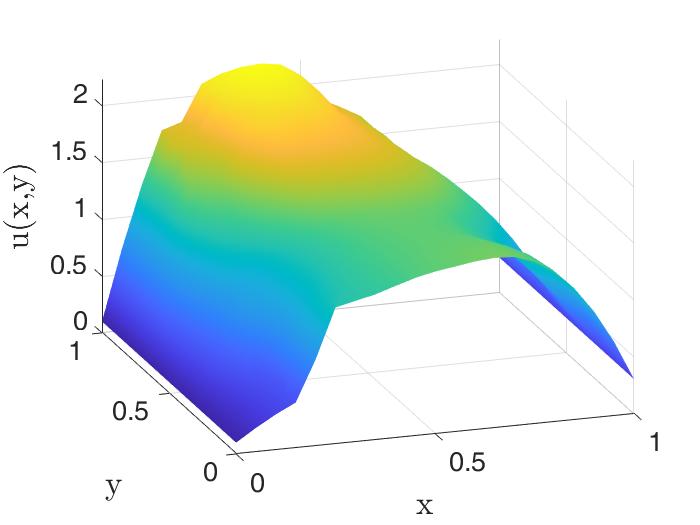}}
	\subfigure{\includegraphics[scale=0.14]{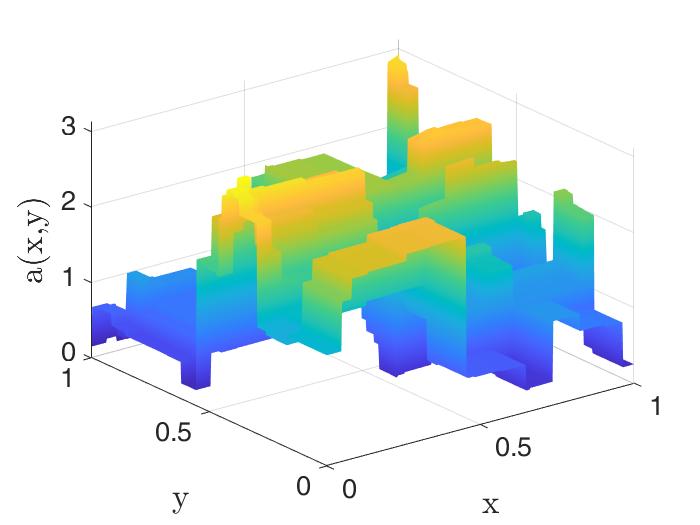}}
	\subfigure{\includegraphics[scale=0.14]{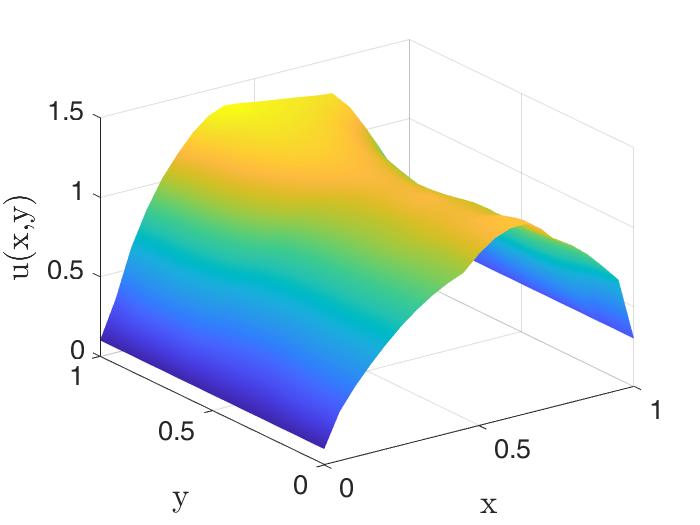}}
    \caption{Different samples of the diffusion coefficient with Gamma($4,10$)-subordinators with small correlation length in the underlying GRF together with the corresponding PDE solutions with mixed Dirichlet-Neumann boundary conditions.}\label{fig:GammaR_samples}
    \end{figure}
    We define the level dependent FE discretization parameters $h_\ell = 0.3\cdot 1.7^{-(\ell-1)}$ for $\ell=1,\dots,5$ and compare the MLMC estimator with the MLMC-CV estimator. We perform 10 independent MLMC runs on the levels $L=1,\dots,5$ to estimate the RMSE where we use a reference solution on level 7 computed by singlelevel Monte Carlo. The results are given in the following figure.
		\begin{figure}[ht]
	\centering
	\subfigure{\includegraphics[scale=0.49]{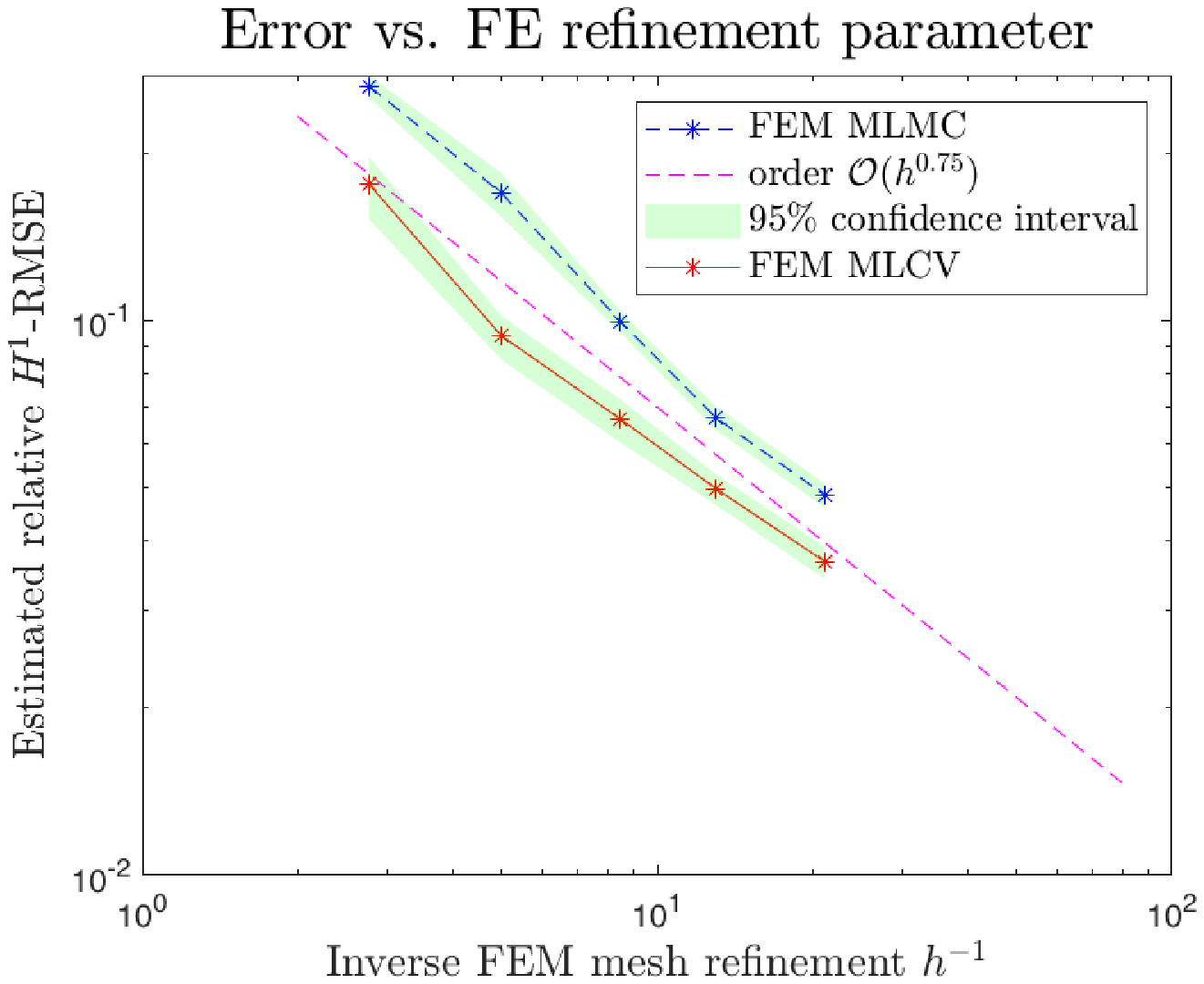}}
	\subfigure{\includegraphics[scale=0.49]{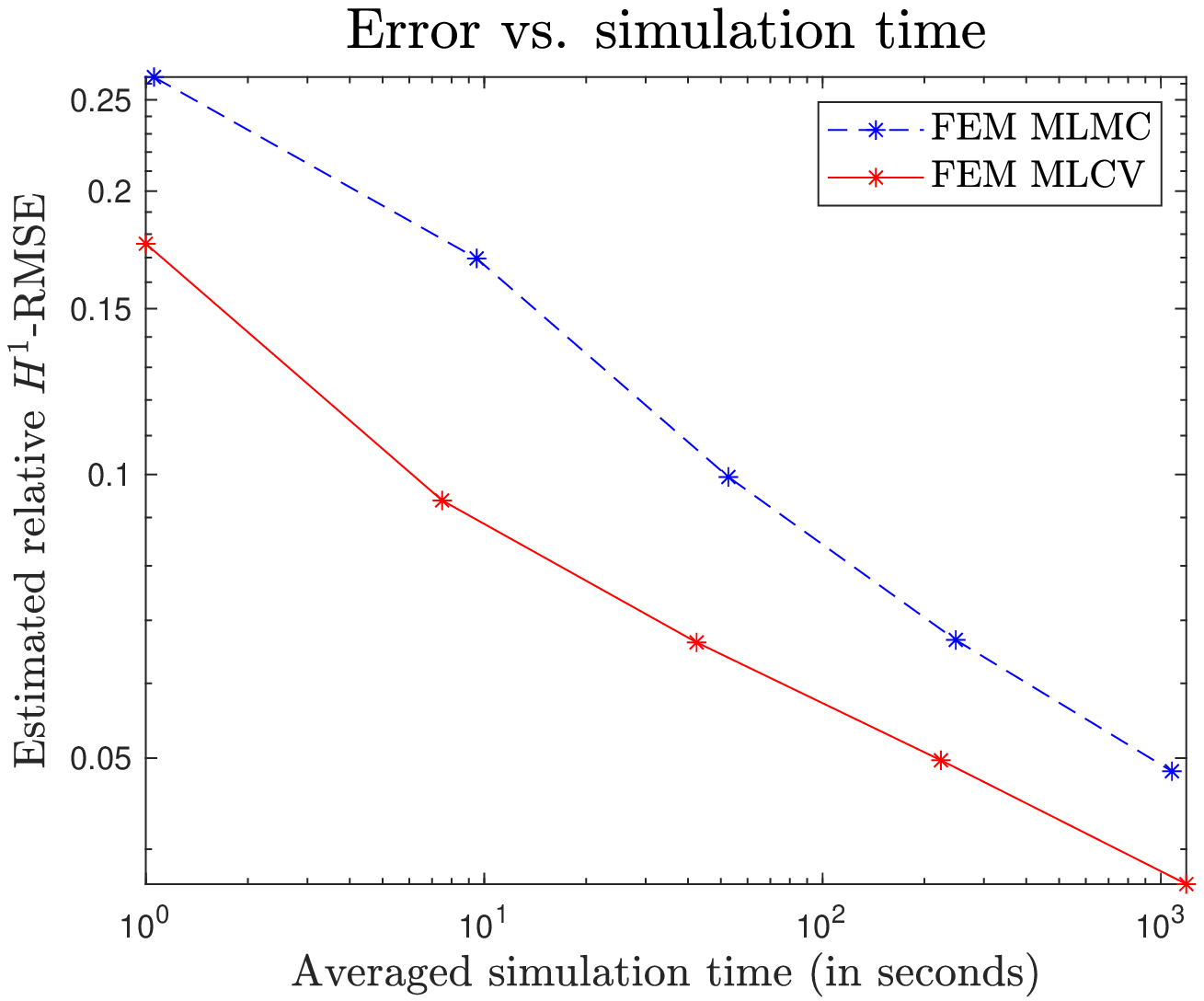}}
	\caption{Convergence of the MLMC and the MLMC-CV estimator for Gamma($4,10$)-subordinators with small noise and small correlation length in the underlying GRF (left) and time-to-error plot (right).}	\label{fig:MLMCvsMLCVGammaEX1}
	\end{figure}
Figure \ref{fig:MLMCvsMLCVGammaEX1} shows a similar convergence rate of approximately $0.75$ for the MLMC and the MLMC-CV estimator. However, the sample-wise correction by the smooth PDE problem in the MLMC-CV estimator improves the approximation which yields significantly smaller values for the RMSE on the different levels compared to the standard MLMC estimator. The efficiency improvement obtained by the Control Variate is further demonstrated on the right hand side of Figure \ref{fig:MLMCvsMLCVGammaEX1}: The time-to-error plot demonstrates that the computational effort which is necessary to achieve a certain accuracy is significantly smaller for the MLMC-CV estimator compared to the standard MLMC estimator.

\noindent  In our last numerical example we choose $\Phi_1=1/5\,exp(\cdot)$, $\Phi_2=3|\cdot|$ and $\sigma_2^2=0.5^2$, $r_2=0.2$ for the GRF $W_2$ and leave all other parameters unchanged. This leads to diffusion coefficient which is more noise accentuated with reduced jump intensity (see also Subsection \ref{subsubsec:P5SubordRough}) as can be seen in Figure \ref{fig:GammaS_samples}.

	\begin{figure}[ht]
	\centering
	\subfigure{\includegraphics[scale=0.14]{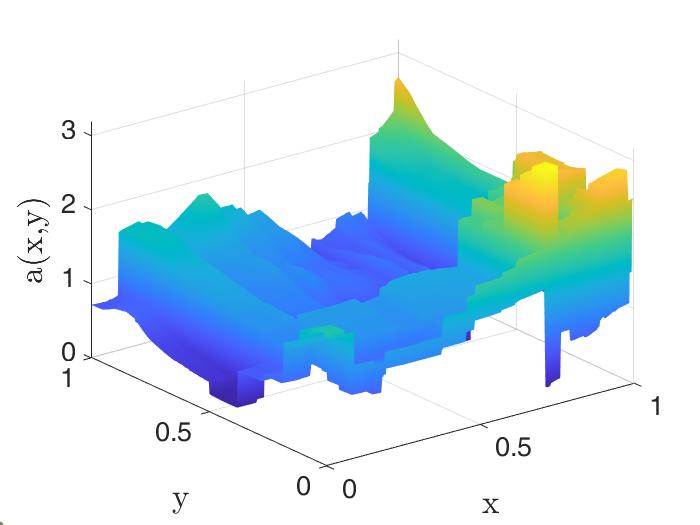}}
	\subfigure{\includegraphics[scale=0.14]{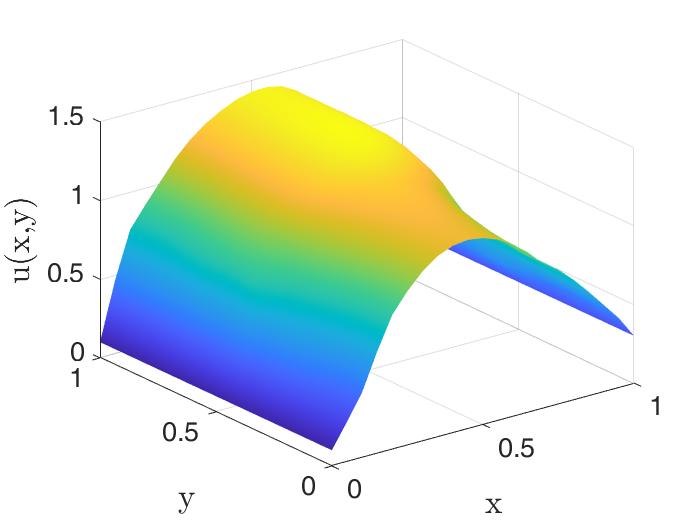}}
	\subfigure{\includegraphics[scale=0.14]{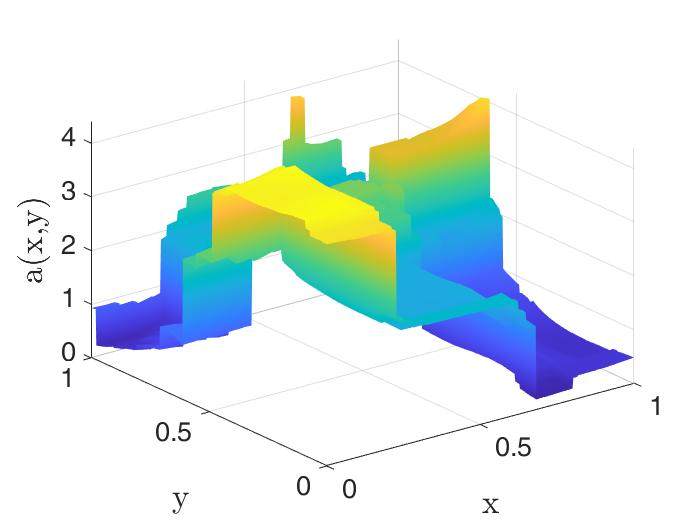}}
	\subfigure{\includegraphics[scale=0.14]{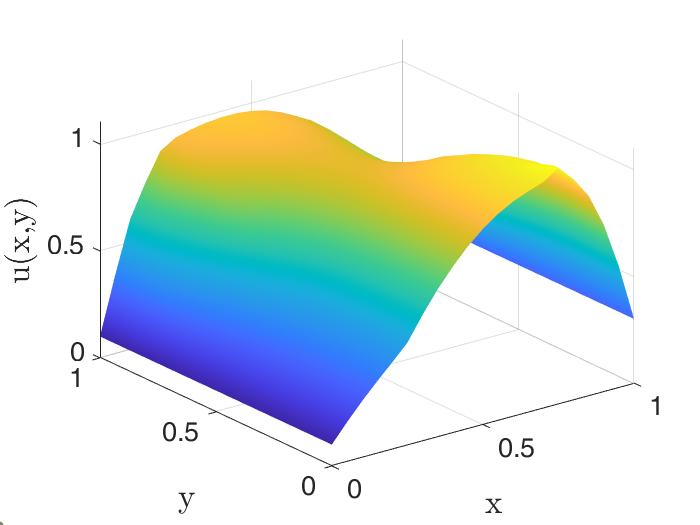}}
    \caption{Different samples of the diffusion coefficient with Gamma($4,10$)-subordinators with strong noise and higher correlation length in the underlying GRF together with the corresponding PDE solutions with mixed Dirichlet-Neumann boundary conditions.}\label{fig:GammaS_samples}
    \end{figure}
     As in the last experiment, we define the level dependent FE discretization parameters $h_\ell = 0.3\cdot 1.7^{-(\ell-1)}$ for $\ell=1,\dots,5$ and compare the MLMC estimator with the MLMC-CV estimator. We use 10 independent MLMC runs on the levels $L=1,\dots,5$ to estimate the RMSE and use a singlelevel Monte Carlo estimation on level 7 as reference solution. The results are given in Figure \ref{fig:MLMCvsMLCVGammaEX2}.
		\begin{figure}[ht]
	\centering
	\subfigure{\includegraphics[scale=0.49]{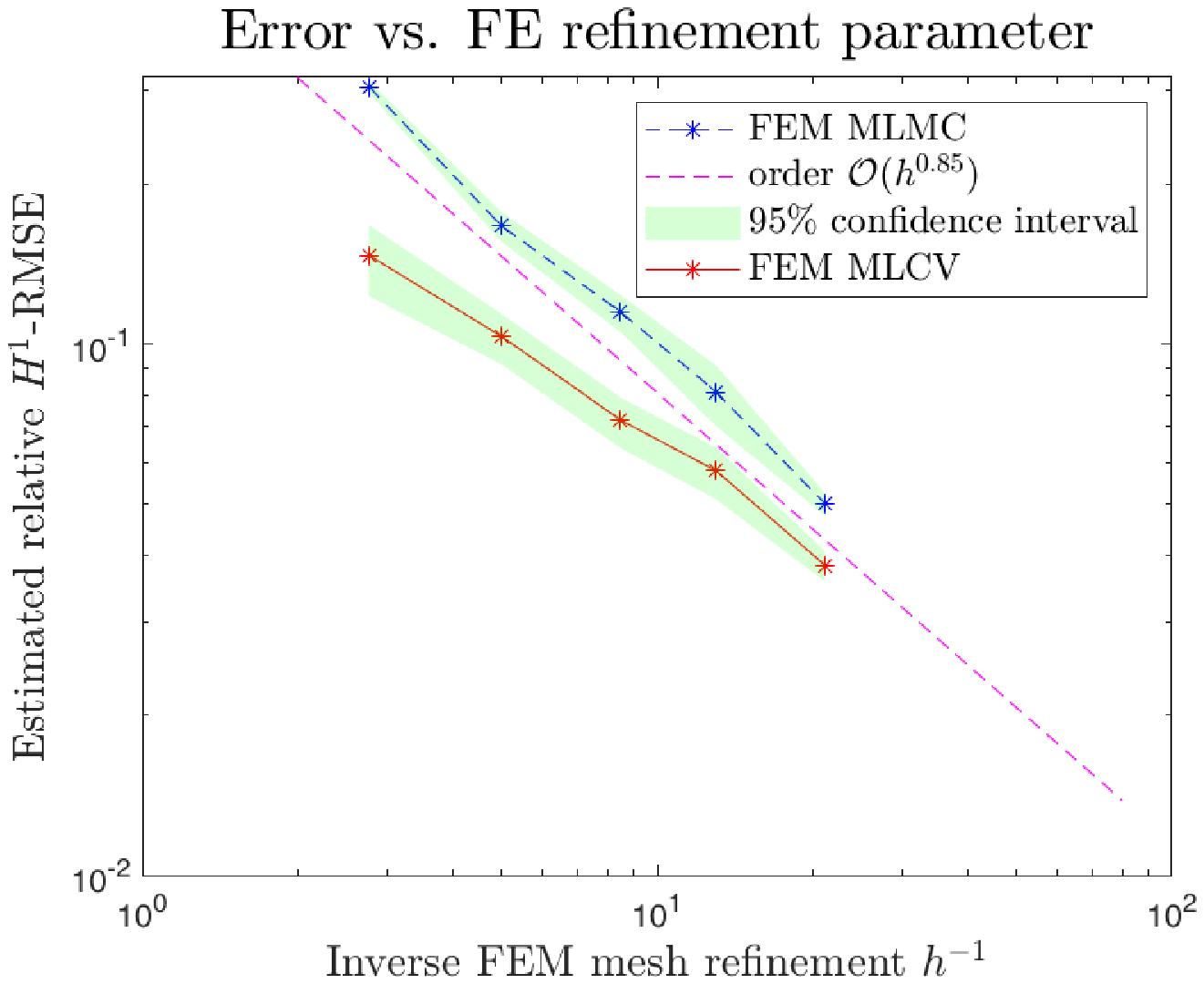}}
	\subfigure{\includegraphics[scale=0.49]{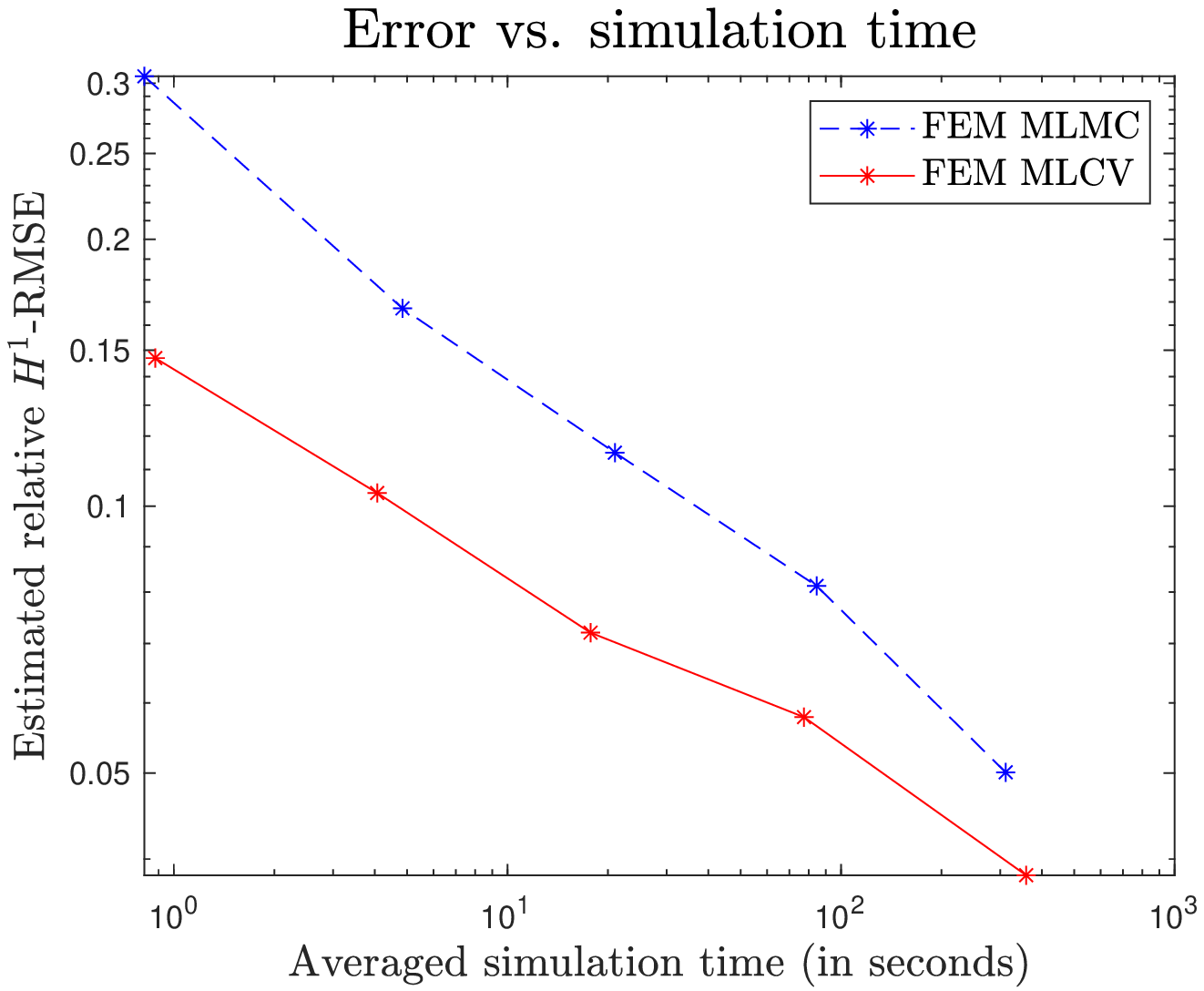}}
	\caption{Convergence of the MLMC and the MLMC-CV estimator for Gamma($4,10$)-subordinators with strong noise and underlying GRF with higher correlation length  (left) and time-to-error plot (right).}	\label{fig:MLMCvsMLCVGammaEX2}
	\end{figure}

The reduced jump intensity together with the emphasized (continuous) noise in the diffusion coefficient leads to a slightly improved convergence rate of approximately $0.85$ for the estimators in this example (cf. Figure \ref{fig:MLMCvsMLCVGammaEX1}). As in the first experiment, we see that the usage of the Control Variate yields a significant improvement which is reflected in smaller values for the RMSE on the different levels compared to the standard MLMC approach. As expected, the right hand side of Figure \ref{fig:MLMCvsMLCVGammaEX2} shows an improved efficiency of the MLMC-CV estimator compared to the standard MLMC estimator without Control Variates.


	\section*{Acknowledgments}
		Funded by Deutsche Forschungsgemeinschaft (DFG, German Research Foundation) under Germany's Excellence Strategy - EXC 2075 -     390740016.


 	\bibliographystyle{siam}
	\bibliography{references}

\end{document}